\newtheorem{theo}{Theorem}[section]
\newtheorem{defi}[theo]{Definition}
\newtheorem{prop}[theo]{Proposition}
\newcommand{\cgot}{\ensuremath{\mathfrak{c}}}
\newcommand{\ggot}{\ensuremath{\mathfrak{g}}}
\newcommand{\hgot}{\ensuremath{\mathfrak{h}}}
\newcommand{\kgot}{\ensuremath{\mathfrak{k}}}
\newcommand{\tgot}{\ensuremath{\mathfrak{t}}}
\newcommand{\Acal}{\ensuremath{\mathcal{A}}}
\newcommand{\Bcal}{\ensuremath{\mathcal{B}}}
\newcommand{\Ccal}{\ensuremath{\mathcal{C}}}
\newcommand{\Dcal}{\ensuremath{\mathcal{D}}}
\newcommand{\Fcal}{\ensuremath{\mathcal{F}}}
\newcommand{\Ical}{\ensuremath{\mathcal{I}}}
\newcommand{\Kcal}{\ensuremath{\mathcal{K}}}
\newcommand{\Lcal}{\ensuremath{\mathcal{L}}}
\newcommand{\Ncal}{\ensuremath{\mathcal{N}}}
\newcommand{\Ocal}{\ensuremath{\mathcal{O}}}
\newcommand{\Qcal}{\ensuremath{\mathcal{Q}}}
\newcommand{\Rcal}{\ensuremath{\mathcal{R}}}
\newcommand{\Scal}{\ensuremath{\mathcal{S}}}
\newcommand{\Ucal}{\ensuremath{\mathcal{U}}}
\newcommand{\Ycal}{\ensuremath{\mathcal{Y}}}
\newcommand{\Z}{\ensuremath{\mathbb{Z}}}
\newcommand{\C}{\ensuremath{\mathbb{C}}}
\newcommand{\R}{\ensuremath{\mathbb{R}}}
\newcommand{\N}{\ensuremath{\mathbb{N}}}
\newcommand{\T}{\ensuremath{\hbox{\rm T}}}
\newcommand{\Qfor}{\ensuremath{\Qcal^{-\infty}}}
\newcommand{\Rfor}{\ensuremath{\hat R}}
\newcommand{\Ker}{\ensuremath{Ker}}
\newtheorem{theorem}{Theorem}[section]
\newtheorem{proposition}[theorem]{Proposition}
\newtheorem{lemma}[theorem]{Lemma}
\newtheorem{definition}[theorem]{Definition}
\newtheorem{example}[theorem]{Example}
\numberwithin{equation}{section}
\newcommand{\A}{{\mathcal{A}}}
\newcommand{\CB}{{\mathcal{B}}}
\newcommand{\CL}{{\mathcal{L}}}
\newcommand{\CO}{{\mathcal{O}}}
\renewcommand{\ll}{{\langle}}
\newcommand{\rr}{{\rangle}}
\def \clif {{\bf c}}
\newcommand{\qfor}{\ensuremath{\mathcal{Q}^{-\infty}}}
\newcommand{\g}{\mathfrak{g}}
\newcommand{\h}{\mathfrak{h}}
\renewcommand{\t}{\mathfrak{t}}
\renewcommand{\q}{\mathfrak{q}}
\newcommand{\Ahat}{{\hat{A}}}
\newcommand{\res}{\ensuremath{\hbox{\rm rescaling}}}
\newcommand{\QS}{\ensuremath{\Qcal^{\mathrm{spin}}}}
\newcommand{\vol}{\ensuremath{{\rm vol}}}
\def \wG {{\widehat{G}}}
\def \wT {{\widehat{T}}}
\def \wH {{\widehat{H}}}
\begin{document}
\title[Dirac operators and semi-classical limits.]{The equivariant index of twisted Dirac operators and semi-classical limits.}
\date{}

\author{Paul-Emile PARADAN  and
Mich\`ele VERGNE}

\begin{abstract}
Let $G$ be a compact connected Lie group with Lie algebra $\g$.
 Let $M$ be a compact spin manifold with a $G$-action, and $\CL$ be a $G$-equivariant line bundle on $M$.
 Consider an integer $k$, and let
  $\QS_G(M,\CL^k)$ be the equivariant index of the Dirac operator on $M$
 twisted by $\CL^k$.
 Let  $m_G(\lambda,k)$ be the multiplicity
  in  $\QS_G(M,\CL^k)$  of the irreducible representation of $G$ attached to the admissible coadjoint orbit $G\lambda$.
 We prove that the distribution
  $\ll \Theta_k,\varphi\rr= k^{\dim(G/T)/2}\sum_{\lambda} m_G(\lambda,k)\ll\beta_{\lambda/k},\varphi\rr $ has an asymptotic expansion
when $k$ tends to infinity of the form
$\ll\Theta_k,\varphi\rr\equiv k^{\dim M/2}\sum_{n=0}^{\infty} k^{-n} \ll \theta_n,\varphi\rr$.
Here $\varphi$ is a test function on $\g^*$ and $\ll\beta_\xi,\varphi\rr$ is the integral of $\varphi$ on
the coadjoint orbit $G\xi$ with respect to the canonical Liouville measure.
We compute explicitly the distribution $\theta_n$ in terms of the graded $\hat A$ class of $M$ and the equivariant curvature of $\CL$.

If $M$ is non compact, we use these asymptotic techniques to give another proof of the
fact that the formal geometric quantification of a manifold with a spinc structure is functorial with
respect to restriction to subgroups.

\end{abstract}

\maketitle

{\small
\tableofcontents}

{\bf To the memory of Bertram Kostant.}

\medskip

This article is pursuing the fundamental idea of Kostant that  a line bundle $\CL$ on a $G$-manifold $M$,
equipped with a $G$-invariant connection, give rise
 to a moment map $\phi_G: M\to \g^*$, and thus  hopefully to a relation between
the quantization of $M$ and the representation theory of $G$,  since $\wG$ may be considered
as a subset of $\g^*$. We investigate further the corresponding quantization, via Dirac operators, in the case
where the group $G$ is compact, and the moment map proper. Our new insight is that the asymptotic
behavior when $\CL$ is replaced by $\CL^k$ is easy to compute classically, and determines completely the quantization at $k=1$. Certainly, Bertram Kostant would have found this idea obvious, but maybe also beautiful. Anyway, here it is.

\section{Introduction}

Let $M$  be a compact oriented spin manifold  of even  dimension $2d$.
 Let $\CL$ be a line bundle  over $M$ equipped with a connection of curvature $-i\Omega$ and let $\hat A (M)$ be the $\hat A$ class of $M$ (normalized as in \cite{BGV}).
 We do not assume the curvature $-i\Omega$ of $\CL$ to be non-degenerate.
Define the integral
$$\Qcal^{\rm geo}(M,\CL)= \frac{1}{(-2i\pi)^d} \int_M e^{-i\Omega} {\hat A}(M).$$
The Dirac operator $D_\CL$ twisted by $\CL$ is an elliptic operator on $M$, and let $\QS(M,\CL)=\dim {\rm \Ker}(D_{\CL})-\dim {\rm Coker}(D_{\CL})$ the index of $D_\Lcal$.
By the Atiyah-Singer index theorem,
$\QS(M,\CL)=
\Qcal^{\rm geo}(M,\CL).$

Assume now that a compact connected Lie group $G$, with Lie algebra $\g$, acts on $(M,\CL)$.
The choice of an invariant connection on the line bundle $\CL$
determines a moment map $\phi_G: M\to \g^*$.
This is the Kostant moment map  \cite{Kostant}.
Let $T$ be a maximal torus of $G$, $\t$ its Lie algebra.
We identify the space $\wG$ of irreducible finite dimensional representations of $G$
 to the discrete set of elements $\lambda\in  \t^*$,
which are dominant admissible and regular, and
we denote by $\chi_\lambda(g)$ the trace of the action of $g\in G$
 on the irreducible representation of $G$  parameterized by
$\lambda$.

If $k$ is an integer, we denote by $\CL^k$ the $k$-th power of the line bundle $\CL$.
Define  the  function $\QS_{G}(M,\CL^k)$  on $G$ to be the trace of the action of $g\in G$
in the virtual space   ${\rm \Ker}(D_{\CL^k})-{\rm Coker}(D_{\CL^k})$.
 Define $m_{G}(\lambda,k)\in \Z$ such that
$$\QS_{G}(M,\CL^k)=
\sum_{\lambda\in {\hat G}} m_{G}(\lambda,k)\chi_\lambda.$$

Consider the geometric (re-scaled) analogue
$$
\Theta^{(M,\CL)}_k=  k^{\frac{\dim(G/T)}{2}}\sum_{\lambda \in {\hat G}} m_G(\lambda,k)\beta_{\lambda/k},
$$
the weighted sum of  the canonical Liouville measures  $\beta_{\lambda/k}$
on the coadjoint orbits $G\lambda/k$.

The aim of this article is to study the asymptotic behavior of
$\Theta^{(M,\CL)}_k$ when $k$ is large, and $M$ possibly non compact,
and to explore  a functorial consequence of this formula for
 reduced spaces.

We work in the spin context. The same argument would adapt to manifolds with spinc structures, provided we work with odd powers of $k$.
This context is more general than the Hamiltonian context (which includes
 the K\"{a}hler context), and it is more natural since Weyl character formula for  $\chi_\lambda$  is the fixed point formula for a  twisted Dirac operator on the coadjoint orbit $G\lambda$.
 We will return to the comparison between both contexts in a forthcoming article.
The article \cite{ver:graded-CRAS} determines the  asymptotic expansion of $\Theta^{(M,\CL)}_k$
in the Hamiltonian context, when $G$ is a torus.

Let us return  to the case of a  spin manifold $M$ with a $G$-invariant line bundle with connection. We do not assume 
anymore  $M$ compact, but we assume the moment map $\phi_G$ proper.
In this introduction,  to simplify, assume that all stabilizers of the action of $G$ on $M$ are abelian and connected.
We also assume that $\rho$ is in the weight lattice of $T$. For every $\lambda\in \wG\subset \wT\subset \t^*$, consider the reduced space
$$
M_{\lambda, G}:=\phi_G^{-1}(\lambda)/T
$$
which may be non connected. If $\lambda$ is a regular value of $\phi_G$, $M_{\lambda, G}$
is a smooth compact spin manifold.
Denote by $\C_{[-\lambda]}$ the vector space $\C$ with the action of $T$ given by the character $t^{-\lambda}$  of $T$.
The restriction  of  the line bundle $\C_{[-\lambda]}\otimes \CL^k$ to
 $\phi_G^{-1}(\lambda)$ is the pull back of a line bundle $\CL_{\lambda,k}$
 on $M_{\lambda, G}$.
So define
$$
m^{\rm geo}_G(\lambda,k)=\Qcal^{\rm geo}(M_{\lambda, G}, \CL_{\lambda,k}).
$$
 If $\lambda$ is not a regular value of $\phi_G$, this number can be defined by deformation.
  When $M$ is compact, the $[Q,R]=0$ theorem in the spin context \cite{pep-vergne:acta} asserts that $m_G(\lambda,k)=m^{\rm geo}_G(\lambda,k)$.
This suggests to extend the definition of $\QS_{G}(M,\CL)$,
when $\CL$ is equipped with a connection (that we leave implicit in the notation)
such that the moment map $\phi_G$ is proper, in the following way. Define
$$\Qcal^{\rm geo}_G(M,\CL^k):=\sum_{\lambda} m^{\rm geo}_G(\lambda,k) V_\lambda.$$
We call this object (in the spirit of \cite{Weitsman}) the formal geometric quantization of $(M,\CL)$.

Here are two simple examples.

\begin{example}

$\bullet$ $M=T^*S^1$, with coordinates $(t,\theta)$,
and the natural action of the circle group $G=S^1$.
 Consider the trivial line bundle
$\CL$ with connection $d-itd\theta$.
The moment map is $\phi_G(t,\theta)=t$.
Thus  $\Qcal^{\rm geo}_G(M,\CL^k)$ is independent of $k$:
$$\Qcal^{\rm geo}_G(M,\CL^k)=\sum_{n\in \Z} e^{in\theta}.$$

$\bullet$  $M=\C$, with coordinate $z$.
 We consider an integer $a$ and the  line bundle $\CL=M\times \C$
 with action $e^{i\theta}(z,v)=(e^{2i\theta}z, e^{ia\theta}v)$.
 Take the connection $\nabla=d-\frac{i}{2} Im (zd\bar{z})$.
The moment map is  $\phi_G(z)=a+|z|^2 $ and is proper.
Then
$$\Qcal^{\rm geo}_{G}(M,\CL^k)=e^{ika\theta}
\sum_{j\geq 0 } e^{i(2j+1) \theta}.$$
\end{example}

As $M$ is non compact, it is not possible to define the index of $D_\CL$ without introducing additional data.  
We use transversally elliptic operators. The Kirwan vector field $\kappa_G$  associated to $\phi_G$ allows us to deform 
the symbol of the Dirac operator, and to obtain a $G$-invariant transversally elliptic operator $D_{\CL,\phi_G}$ on $M$ 
if the set of zeroes of $\kappa_G$ is compact, which we assume in this introduction.  The index $\Qfor_G(M,\CL^k)$ of this 
operator  provides a well defined generalized function on $G$. So
 define $m_{G}(\lambda,k)\in \Z$ such that
$$
\Qfor_{G}(M,\CL^k)(g)=
\sum_{\lambda\in {\hat G}} m_{G}(\lambda,k)\chi_\lambda(g).
$$
Then, we have again \cite{Hochs-Song:duke}
$$m_G(\lambda,k)=m^{\rm geo}_G(\lambda,k).$$
An important consequence of this geometric relation is
the fact that the function $m_G(\lambda,k)$ is a piecewise quasi-polynomial function.
In particular, the map $k\mapsto m_G(k\lambda,k)$ is entirely determined by its large behavior.

We  define as in the compact case the weighted sum of  measures of the orbits $G\lambda/k$:
$$\Theta^{(M,\CL)}_k=k^{\dim(G/T)/2}\sum_{\lambda} m_G(\lambda,k)\beta_{\lambda/k}.$$

As it is well known, at least in the Hamiltonian case, the measure
$\frac{1}{k^{\dim M/2}}\Theta^{(M,\CL)}_k$ tends  to the Duistermaat-Heckman measure when $k$ tends to $\infty$.
 Here we will prove that
$\Theta^{(M,\CL)}_k$ has an asymptotic expansion, when $k\to \infty$,
  as a Laurent series (in $1/k$) of distributions, and we will determine all coefficients
as twisted Duistermaat-Heckman distributions related to the $\hat A$ class of $M$.

Recall in this spin context  the definition of the Duistermaat-Heckman measure.
 Let $\Omega(X)=\Omega-\langle \phi_G,X\rangle$, $X\in \g$.
This is a closed equivariant form  of degree $2$ on $M$ and $e^{-i\Omega(X)}$
is the equivariant Chern character of $\CL$. The form $\Omega^{\dim M/2}$
is a density on $M$ (which might  be not positive).
The Duistermaat-Heckman measure is the signed measure on $\g^*$ obtained by push-forward of  $\Omega^{\dim M/2}$ by the proper map $\phi_G$.
If $M$ is compact, its Fourier transform is the function
$X\mapsto \frac{1}{(-2i\pi)^d}\int_M e^{-i\Omega(X)}$ on $\g$.
Similarly, if $\nu(X)$ is a closed $G$-equivariant form on $M$, with polynomial coefficients, we can define  the distribution $DH^G(M,\Omega,\nu)$ on $\g^*$ by the formula:
\begin{equation}\label{DHmu}
\ll DH^G(M,\Omega,\nu),\varphi\rr=\frac{1}{(-2i\pi)^d}\int_{M}\int_{\g} e^{-i\Omega(X)}
\nu(X) \widehat{\varphi}(X)dX.
\end{equation}
Here  $\varphi$ is a test  function on $\g^*$, with Fourier transform
$\widehat{\varphi}$.
%
It is easy to see that this distribution is well defined if $\phi_G$ is proper.

Consider the equivariant $\hat A$  class  of $M$ which belongs to the completion of the equivariant cohomology ring of $M$, and
its expansion $\hat A(M)(X)=\sum_{n=0}^{\infty} \Ahat_n(M)(X)$ in equivariant classes homogeneous of degree $2n$.
Finally,  let $j_{\g}^{1/2}(X)=\det_{\g}^{1/2}\left(\frac{e^{X/2}-e^{-X/2}}{X}\right)$,
  a $G$-invariant function of  $X\in \g$.
  It determines a formal series $j_\g^{1/2}(i\partial/k)$
  of $G$-invariant constant coefficient differential operators on $\g^*$.

Our main contribution is the following result (see Theorem \ref{asympt1-noncompact}).

\begin{theorem}\label{theo:intro}
When $k$ tends to $\infty$,
\begin{equation}\label{eq:theo-intro}
\Theta^{(M,\Lcal)}_k \equiv
 j_\g^{1/2}(i\partial/k)\Big(k^d\sum_{n=0}^{\infty}
   \frac{1}{k^n} DH^G(M,\Omega, \Ahat_n(M))\Big).
\end{equation}
   \end{theorem}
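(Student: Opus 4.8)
The plan is to compute $\ll\Theta^{(M,\CL)}_k,\varphi\rr$ directly by Fourier analysis on $\g^*$ and to reduce it to a Chern--Weil integral over $M$. Writing $\varphi$ through its Fourier transform $\widehat\varphi$, the pairing $\ll\beta_{\lambda/k},\varphi\rr$ equals, up to the normalization conventions, $\int_\g\widehat\varphi(X)\,\widehat{\beta_{\lambda/k}}(X)\,dX$ with $\widehat{\beta_{\lambda/k}}(X)=\int_{G\lambda/k}e^{i\ll\xi,X\rr}\,d\beta_{\lambda/k}(\xi)$. Since $G\lambda/k=\tfrac1k\,G\lambda$ and the Kirillov--Kostant--Souriau form on a coadjoint orbit scales linearly under $\xi\mapsto\xi/k$, one gets $\widehat{\beta_{\lambda/k}}(X)=k^{-\dim(G/T)/2}\,\widehat{\beta_\lambda}(X/k)$, and the factor $k^{-\dim(G/T)/2}$ cancels the prefactor in the definition of $\Theta^{(M,\CL)}_k$. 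Kirillov's character formula for the compact group $G$ identifies $\widehat{\beta_\lambda}(Y)$ with $j_\g^{1/2}(Y)\,\chi_\lambda(\exp Y)$ globally on $\g$ (the proportionality constant being $1$ in the paper's normalization of the Liouville measure and of $\widehat\varphi$); summing against the multiplicities $m_G(\lambda,k)$ therefore yields
\[
\ll\Theta^{(M,\CL)}_k,\varphi\rr \;=\; \int_\g \widehat\varphi(X)\; j_\g^{1/2}(X/k)\; \Qfor_G(M,\CL^k)(\exp(X/k))\;dX .
\]

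Next I would insert the delocalized (Berline--Vergne) form of the equivariant index theorem for the transversally elliptic operator $D_{\CL,\phi_G}$: for $Y$ in a fixed neighbourhood of $0$ in $\g$,
\[
\Qfor_G(M,\CL^k)(\exp Y)\;=\;\frac{1}{(-2i\pi)^d}\int_M e^{-ik\Omega(Y)}\,\hat A(M)(Y),
\]
the right-hand side being understood, when $M$ is non-compact, by integrating over $\g$ before $M$ as in \eqref{DHmu}, which is legitimate because $\phi_G$ is proper (and $\{\kappa_G=0\}$ is compact). Substituting $Y=X/k$ and using $k\Omega(X/k)=k\Omega-\ll\phi_G,X\rr$ turns this into a double integral over $M\times\g$ of $\widehat\varphi(X)\,j_\g^{1/2}(X/k)\,e^{-ik\Omega}\,e^{i\ll\phi_G,X\rr}\,\hat A(M)(X/k)$. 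Expanding $\hat A(M)(X/k)=\sum_n\Ahat_n(M)(X/k)$ and keeping the component of form-degree $2d$ on $M$, the equivariant homogeneity of $\Ahat_n(M)$ makes its form-degree-$2a$ part carry the factor $k^{-(n-a)}$ while the matching power $\Omega^{d-a}$ inside $e^{-ik\Omega}$ carries $k^{d-a}$; each $n$ therefore contributes exactly at order $k^{d-n}$, with remaining form equal to the integrand of $DH^G(M,\Omega,\Ahat_n(M))$ from \eqref{DHmu}. Finally, multiplication of the $X$-integrand by the even power series $j_\g^{1/2}(X/k)$ is, by Fourier duality on $\g^*$, the action of the invariant constant-coefficient differential operator $j_\g^{1/2}(i\partial/k)$, which produces exactly the prefactor of \eqref{eq:theo-intro}.

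What remains is to turn this formal identity into an honest asymptotic expansion. Since $\widehat\varphi$ is rapidly decaying but not compactly supported, one first notes that after the scaling $Y=X/k$ the argument lies in the region of validity of the delocalized formula except on a set where $|X|\gtrsim k$, on which $\widehat\varphi(X)=O(k^{-\infty})$; hence that region of validity may be assumed without changing the expansion. Then, truncating both $\hat A(M)$ and $j_\g^{1/2}$ at order $N$, the remainder integral is $O(k^{d-N-1})$ uniformly in $k$: the $M$-integration is controlled by properness of $\phi_G$ (integrating over $\g$ first produces a constant-coefficient derivative of $\varphi$ composed with $\phi_G$, hence a compactly supported density on $M$), the factor $e^{-ik\Omega}$ contributes at most $O(k^d)$, and the tail of $\hat A(M)(X/k)$ supplies the gain $O(k^{-N-1})$ after integrating $\widehat\varphi(X)$ against a polynomial. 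The main obstacle is the input of the second step: justifying the delocalized index formula for $D_{\CL,\phi_G}$ near the identity on the non-compact manifold $M$, together with the uniform-in-$k$ interchange of the sum over $\lambda$, the integral over $M$, and the integral over $\g$. When $M$ is compact this is the classical Atiyah--Singer/Berline--Vergne formula and every interchange is elementary, so the argument collapses to the two steps above; the non-compact case is obtained by localizing the transversally elliptic index near the compact set $\{\kappa_G=0\}$ and reducing to a sum of local models of compact type (equivalently, using $m_G(\lambda,k)=m^{\rm geo}_G(\lambda,k)$ and the compact reduced spaces $M_{\lambda,G}$).
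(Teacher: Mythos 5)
Your outline is essentially sound for the \emph{compact} case, where it reproduces the paper's Theorem~\ref{asympt1}: the Kirillov/Fourier step, the delocalized Berline--Vergne formula, the bookkeeping between equivariant degree and powers of~$k$, and the tail estimate on $\{|X|\gtrsim k\}$ using the rapid decay of $\widehat\varphi$ and a polynomial bound on the character. The scaling identity $\widehat{\beta_{\lambda/k}}(X) = k^{-\dim(G/T)/2}\widehat{\beta_\lambda}(X/k)$ and the normalization of Kirillov's formula are also correct.

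The genuine gap is in the non-compact case, which is precisely what Theorem~\ref{theo:intro} asserts. You write
$\Qfor_G(M,\CL^k)(\exp Y)=\frac{1}{(-2i\pi)^d}\int_M e^{-ik\Omega(Y)}\hat A(M)(Y)$
and propose to interpret the right-hand side by "integrating over $\g$ before $M$." That interpretation makes sense term-by-term for the \emph{polynomial} pieces $\Ahat_n(M)$ (because $\phi_G$ is proper), but it is not the delocalized index formula for the transversally elliptic operator $D_{\CL,\phi_G}$, and it does not justify the equality with the generalized function $\Qfor_G(M,\CL^k)$: the right-hand side as stated does not converge as an integral over $M$ for a fixed $Y$, and the full (non-polynomial) class $\hat A(M)(Y)$ prevents the naive interchange of $\int_M$ and $\int_\g$ needed to estimate the remainder. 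What the paper actually uses (Proposition~\ref{pro:ParVER}, from \cite{pep-vergne:bismut}) is a per-component identity
$\QS_G(M,\CL^k,Z_\gamma)(e^X)=\frac{1}{(-2i\pi)^d}\int_M \hat A(M)(X)\,P_\gamma(X)\,e^{-ik\Omega(X)}$
where $P_\gamma(X)=\chi+i\,(d\chi)\theta\int_0^\infty e^{-itD\theta(X)}dt$ is an equivariant form with generalized coefficients and \emph{compact support} near $Z_\gamma$; the compact support of $P_\gamma$ is exactly what makes the $M$-integral converge and what allows the uniform-in-$x$ remainder estimates (Lemma~\ref{prop:majorationschiantes} handles the extra $\int_0^\infty t^{j-1}e^{it\langle\Phi_\theta,X\rangle}dt$ terms, using that $\Phi_\theta\neq 0$ on $\supp d\chi$). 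One then obtains the asymptotics of each $\Theta^\gamma_k$ and sums over $\gamma$ using $\sum_\gamma P_\gamma\sim 1$ in cohomology (equation (\ref{eq:pepnonob})), which recovers the ordinary $DH^G(M,\Omega,\Ahat_n(M))$ in the limit. None of this machinery appears in your argument: the form $P_\gamma$, the decomposition $\Theta^{(M,\CL)}_k=\sum_\gamma\Theta^\gamma_k$, and the cohomological summation are the crux of the non-compact proof.

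You do acknowledge the obstacle at the end and gesture at two possible fixes. The first ("localize the transversally elliptic index near $\{\kappa_G=0\}$ and reduce to local models of compact type") is in spirit what the paper does, but as written it is too vague to be checked; the localization is implemented via $P_\gamma$, not by a reduction to compact models. The second (use $m_G=m^{\rm geo}_G$ and the reduced spaces $M_{\lambda,G}$) is genuinely a different route, but it is not carried out, and it is not at all immediate how summing Kawasaki indices over a locally finite family of reduced spaces would reproduce the equivariant $\hat A(M)$ class; the paper does not take this path. Finally, your tail estimate on $\{|X|\gtrsim k\}$ in the non-compact setting also needs care: $\Qfor_G(M,\CL^k)(\exp Y)$ is only a distribution in $Y$, and to bound $\mathrm{J}_k=O(k^{-\infty})$ the paper invokes the polynomial growth in $(\lambda,k)$ of the multiplicities $m_\gamma(\lambda,k)$ together with the quantitative Lemma~\ref{prop:majorationschiantes}; simply noting that $\widehat\varphi$ is rapidly decreasing is not sufficient by itself.
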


The leading term  in $k^d$ is the Duistermaat-Heckman measure.

\bigskip

Let us check this formula in our two basic examples.

\begin{example}\label{ex:cohen}

$\bullet$ For $M=T^*S^1$, the $\hat A$ class is identically equal to $1$, so there exists only one term in the expansion (\ref{eq:theo-intro}).
Here Theorem \ref{theo:intro} is the well known fact that
$$\sum_{\lambda\in \Z} \varphi(\lambda/k)\equiv k \int_\R\varphi(\xi) d\xi.$$

$\bullet$
For $M=\C$, the equivariant $\hat A$ class for our action is the invariant function
$\frac{\theta}{\sin(\theta)}$.
Theorem \ref{theo:intro}
is the following variation of the Euler-MacLaurin formula (see \cite{cohen})
$$\sum_{j\geq 0}\varphi(a+(2j+1)/k)\equiv
\frac{k}{2}\int_{a}^{\infty}
 \varphi(\xi)d\xi-
\sum_{n=1}^{\infty}(\frac{2}{k})^{n-1} \frac{B_{n}(1/2)}{n!}\varphi^{(n-1)}(a).$$
Here $B_{n}(t)$ is the $n$-th Bernoulli polynomial
and only even integers $n$ occur,
since $B_{n}(\frac{1}{2})=0$ for all odd $n\geq 0$.

\end{example}

\medskip

Let us see why this formula is natural, when $M$ is compact.
By Kirillov formula, the Fourier transform of
$\Theta^{(M,\Lcal)}_k$  is the function
$$X\longmapsto \QS_G(M,\CL^k)(\exp X/k)j_\g^{1/2}(X/k)$$ on $\g$.
The delocalized formula \cite{BGV}
for the index says that, for $X\in \g$ small enough,
$$
\QS_G(M,\CL^k)(\exp X/k)=\frac{1}{(-2i\pi)^d} \int_M e^{-ik\Omega(X/k)} {\hat A}(M)(X/k).
$$
But, by simple inspection of the relation between the equivariant degree and the polynomial degree, this is also equal for $X$ small to
$$\frac{1}{(-2i\pi)^d} k^d\int_M e^{-i\Omega(X)}\Big(\sum_{n=0}^{\infty}\frac{1}{k^n}\Ahat_{n}(M)(X)\Big).$$
The formula for the asymptotic expansion of
$\Theta^{(M,\Lcal)}_k$ follows easily.
Remark the dichotomy between the equivariant Chern character
$e^{-i\Omega(X)}$
 that is unchanged in this  asymptotic equivalence,
and involves the exponential function $e^{i\ll\phi_G,X\rr}$, and
the $\hat A$ class that we expand as a series of homogeneous equivariant classes
with polynomial coefficients.
A more delicate analysis, using the delocalized formula (\cite{pep-vergne:bismut})
for the generalized function $\Qfor_G(M,\CL^k)(\exp X)$ leads us also naturally to Theorem \ref{theo:intro}.
The Chern character  $e^{-i\Omega(X)}$ has to be replaced
by the Chern character  $e^{-i\Omega(X)}P(X)$ where $P(X)$ is an equivariant form with
generalized coefficients, supported in a compact neighborhood of the zeroes of $\kappa_G$, equivalent to $1$ in the equivariant cohomology without compact support conditions.

 In conclusion, Theorem  \ref{theo:intro}  says that the
 formula
 $$\QS_G(M,\CL)(\exp X)=\frac{1}{(-2i\pi)^d}
 \int_M  e^{i\Omega(X)}\Ahat(M)(X)$$
 has a meaning in the asymptotic sense for a non compact manifold $M$ when $\CL$ is replaced by $\CL^k$ and $X$ by $X/k$, and is a good ersatz for  the
 equivariant index formula of $D_\CL$.

\bigskip

Let us now explain a consequence of this asymptotic formula for
reduced spaces.

Return for a moment to the case where $M$ is compact.
Plugging $g=1$ in the identity
$\QS_{G}(M,\CL^k)(g)=
\sum_{\lambda\in {\hat G}} m_{G}(\lambda,k)\chi_\lambda(g)$,
leads to the remarkable identity
\begin{equation}\label{eq:mystery}
\Qcal^{\rm geo}(M,\CL^k)=\sum_{\lambda\in \widehat{G}}\,\vol(G\lambda)\,
\Qcal^{\rm geo}(M_{\lambda, G},\CL_{\lambda,k})
\end{equation}
relating an integral on $M$ to a  sum of integrals on the finite number of reduced spaces $M_{\lambda,G}$.
We will see that this equality generalizes to reduction in stages,
even when $M$ is non compact.

Let $H$ be a connected compact subgroup of $G$ with torus $T_H$.
The $H$ action on $(M,\CL)$
leads to the moment map $\phi_H: M\to\h^*$.
Assume $\phi_H$  is still proper.
 In this case we can define $\Qfor_G(M,\Lcal)$, and
 $\Qfor_H(M,\Lcal)$.
When $M$ is compact, from the description of
$\QS_G(M,\CL)$ as the index of  the elliptic operator $D_{\CL}$ on $M$,
we see that  $\QS_H(M,\Lcal)$  is the restriction of the representation $\QS_G(M,\CL)$ to $H$.
When $M$ is non compact, this relation is not obvious
 since our $G$-transversally elliptic operator 
 $D_{\CL,\phi_G}$
 is not (usually) $H$-transversally elliptic.
The following theorem
\begin{equation}\label{eq:formal}
 \Qfor_G(M,\Lcal)\vert_H=\Qfor_H(M,\Lcal)
 \end{equation}
 was  proved by the first author  using cutting \cite{pep:formal3}.
Here we will show that this also follows from comparing
 the large  behavior of $\Qfor_G(M,\Lcal^k)$ and $\Qfor_H(M,\CL^k)$
 and the fact that multiplicities $m_G(k\lambda,k), m_H(k\lambda,k)$ are
 entirely determined by their large values.

 Let $\lambda\in \wG$, and $\mu\in \wH$.
Denote by $c(\lambda,\mu)$ the multiplicity of $V_\mu$ in $V_\lambda\vert_H$.
 Theorem (\ref{eq:formal}) is equivalent to the following equality
of indices of Dirac operators on reduced spaces for $H$, and $G$.

 For any $\mu\in \wH$, we have
$$
\Qcal^{\rm geo}(M_{\mu,H},\CL_{\mu,k})=
\sum_{\lambda\in \wG}\, c(\lambda,\mu) \,\Qcal^{\rm geo}(M_{\lambda,G},\CL_{\lambda,k}).
$$

When $M$ is compact, we can take $H=\{1\}$,
and this is the mysterious equality (\ref{eq:mystery}).
In general, this equality of indices of Dirac operators on reduced spaces for different groups is not clear, since
$M_{\mu,H}=\phi_H^{-1}(\mu)/T_H$ do not carry  any visible group action.

Let us briefly sketch a proof of this  equality for the case
where $G,H$ are  torus.
 This simple case could be  treated   by considering the action of $G/H$ on the compact spin manifold $M_{\mu,H}$, 
 but we treat it by a different method which will generalize to any pair of groups $H\subset G$.

  In this case,  $\wG$, $\wH$ are lattices in $\g^*,\h^*$,
  and we have to prove
  $$m_H(\mu,k)=\sum_{\lambda \in \wG, \lambda|_{\h}=\mu}m_G(\lambda,k).$$
  In a companion article \cite{pep-vergne:asymptotics}, we proved the following easy result.
Consider
a distribution
$D_k=\sum_{\mu\in \wH} q(\mu,k)\delta_{\mu/k}$
associated to a piecewise quasi-polynomial function $q(\mu,k)$.
Assume that, for any $\zeta\in H$, an element of finite order,
  the distribution
$\sum_{\mu\in \hat H} q(\mu,k)\zeta^\mu \delta_{\mu/k}=O(k^{-\infty}) $
when $k$ tends to $\infty$. Then $q(\mu,k)=0$.

We have computed the asymptotic expansion of both distributions  $\Theta_k^G= \sum_{\lambda \in \hat G} \, m_G(\lambda,k)\,\delta_{\lambda/k}$ and 
$\Theta_k^H = \sum_{\mu\in \hat H} \, m_H(\mu,k)\,\delta_{\mu/k}$. One has:
\begin{eqnarray*}
\Theta_k^G &\equiv &  k^{d}\,\sum_{n=0}^{\infty}\frac{1}{k^n} DH^G(M,\Omega,{\hat A}_n(M)),\\
\Theta_k^H &\equiv & k^{d}\,\sum_{n=0}^{\infty}\frac{1}{k^n} DH^H(M,\Omega,{\hat A}_n(M)).
\end{eqnarray*}

Define $m_H'(\mu,k)=\sum_{\lambda \in \wG, \lambda|_{\h}=\mu}m_G(\lambda,k)$.
The distribution
$$S_k^H=\sum_{\mu\in \hat H} m_H'(\mu,k)\delta_{\mu/k}$$
is the push-forward $r_*\Theta_k^G$ of the distribution
$\Theta_k^G$ under the map $r:\g^*\to \h^*$.

The Duistermaat-Heckman distributions
 $DH^G(M,\Omega,\nu)$ behave very well under the push-forward map:
  $$r_*DH^G(M,\Omega,\nu)=DH^H(M,\Omega,\nu).$$

  Indeed, at least in the compact case, the Fourier transform of
\break  $DH^H(M,\Omega,\nu)$ is the restriction to $\h$ of the function
 $ \frac{-1}{(2\pi)^d}
 \int_M e^{-i\Omega(X)}\nu(X)$ on $\g$.
So we see  that $S_k^H$ and $\Theta_k^H$ have the same asymptotic expansion.
This is not entirely sufficient to prove that $\Theta_k^H=S_k^H$.
But we use the fact that both functions $m_H'(\mu,k)$ and $m_H(\mu,k)$
are piecewise quasi-polynomials and that a similar asymptotic descent formula holds
for the distribution
$$
\sum_{\mu\in \wH} m_H(\mu,k) \zeta^{\mu}\delta_{\mu/k}
$$
when $\zeta\in T_H$ is of finite order.

The proof, sketched here for $H$ and $G$ abelian, works equally well for two subgroups $H\subset G$.
We just have to use the formula for the push-forward of the Liouville measure $r_*(\beta_\lambda)$ of admissible coadjoint orbits.
As expected, the full  series for the $\hat A(M)$ class, as well
as the Duflo operator $j_\g^{1/2}(\partial)$  plays a role
in the functoriality.

\bigskip

Asymptotic behavior of quantization when $k$ tends to $\infty$ has
been considered by many authors.
Let us give a very limited  and personal selection of influential works.

Asymptotic behavior of kernels of
Laplacians twisted by $\CL^k$
were used  by Boutet de Monvel-Guillemin
\cite{bou} to produce a formal star-product of functions  on symplectic manifolds.
In general only the few first  terms of the star product formal deformation are computable (see  for example \cite{Mei-toeplitz},  \cite{mama}).

Our asymptotic  trace formula for the transversally elliptic operator $D_{\CL,\phi_G}$ has the same flavor
than Fedosov trace formula  for deformation quantization \cite{fedosov}.

The article of E. Meinrenken \cite{Mei-RR}
where  multiplicities $m_G(\lambda,k)$ are identified at the large limit to  index of  reduced spaces  is in close relation with our setting.
Asymptotic Riemann sums of values of smooth functions at equally spaced sample points of a Delzant polytope $\Delta\subset\g^*$ were studied by Guillemin-Sternberg \cite{guisteriemann}, where the full asymptotic formula is given in terms of  the Todd class of the corresponding toric manifold.
The natural geometric re-scaling $\mu\to \mu/k$ in the computation of $\frac{1}{k^d}\sum_{\mu\in\Z^d\cap k\Delta}\varphi(\mu/k)$
leads to consider re-scaling $X\to X/k$ in the variable $X\in \g$.
This was one of the inspiring examples.

Our application to restrictions to subgroups is a striking example where an obvious property at the semi-classical level (functoriality of push-forward of distributions)
can lead to a proof of a subtle relation at the quantum level.

\section{Asymptotics and representations}

\subsection{Fourier transforms}\label{sec:fourier}

When $V$ is a finite dimensional real vector space, we denote by $\Dcal'(V^*)$ the vector space of complex valued distributions on the dual vector space $V^*$. If $A\in \Dcal'(V^*)$ and
$\varphi\in \Ccal^{\infty}_{cpt}(V^*)$, we denote by $\ll A,\varphi\rr \in \C$ (or $\ll A(\xi),\varphi(\xi)\rr $) the value of   $A$ on $\varphi$.

When $A\in \Dcal'(V^*)$ has a compact support, its Fourier transform $\Fcal_{V^*}(A)\in \Ccal^{\infty}(V)$ is defined by the relation
$$
\Fcal_{V^*}(A)(v):=\ll A(\xi), e^{i\langle\xi,v\rangle}\rr ,\quad v\in V.
$$

When $f$ is a smooth function on $\g$ and has at most polynomial growth, its Fourier transform $\Fcal_{V}(f)\in\Dcal'(V^*)$  is defined by the relation
$$
\ll \Fcal_{V}(f),\varphi\rr = \int_{V} f(v)\widehat{\varphi}(v)dv
$$
where $\widehat{\varphi}(v)=\frac{1}{(2\pi)^{\dim V}}\int_{V^*}\varphi(\xi)e^{-i\langle\xi,v\rangle}d\xi$. Here $dv$ and $d\xi$ are dual volume forms on $V$ and $V^*$.

For any $k\geq 1$, we define the operation
\begin{equation}
\res_{V^*}(k): \Dcal'(V^*)\to \Dcal'(V^*)
\end{equation}
by $\ll \res_{V^*}(k)(A), \varphi\rr =\ll A(\xi),\varphi(\xi/k)\rr $. We have also the operation
$\res_{V}(k):\Ccal^{\infty}(V)\to \Ccal^{\infty}(V)$ defined by $\res_{V}(k)(f)(v)= f(v/k)$.

\subsection{The isomorphism $\Rcal_\g$}\label{sec:iso-R-g}

Let $G$ be a connected compact Lie group with Lie algebra $\g$. Let $T\subset G$ be  a maximal torus with Lie algebra $\t$. Let $W_G$ be the Weyl group.
We consider $\wT$ as a lattice $\Lambda$ in $\t^*$.
If $\lambda\in \Lambda$, we denote by  $t^{\lambda}$
the corresponding character of $T$. If $t=\exp (X)$ with $X\in \t$, then
$t^{\lambda}=e^{i\ll\lambda,X\rr}$.
We choose a system $\Delta^+$ of positive roots. In our convention, $\Delta^+$ is contained in $\t^*$.
We choose an invariant scalar product on $\g$.

Let $\Dcal'(\ggot^*)^G$ be the space of $G$-invariant distributions on $\ggot^*$ and
$\Dcal'(\tgot^*)^{W_G-alt}$ be the subspace of $W_G$ anti-invariant distributions on $\t^*$.
Let $\Pi_{\g/\t}(X)=\prod_{\alpha\in \Delta^+}\langle\alpha,X\rangle$, a $W_G$ anti-invariant function on $\t$.

We will use the following classical fact.
\begin{lemma}
There exists a unique linear isomorphism
$$
\Rcal_\ggot: \Dcal'(\ggot^*)^G\longrightarrow \Dcal'(\tgot^*)^{W_G-alt}
$$
with the following two properties.
\begin{itemize}
\item $\Rcal_\ggot(f A)=f\vert_{\tgot^*} \Rcal_\ggot(A)$ for all $A\in \Dcal'(\ggot^*)^G$ and $f\in \Ccal^{\infty}(\ggot^*)^G$.
\item For compactly supported distributions $A$, the Fourier transform of $\Rcal_\ggot(A)$  is given by
$$
\ll \Rcal_\ggot(A), e^{i\langle - ,X\rangle}\rr =  (i)^r\Pi_{\g/\t}(X) \ll A, e^{i\langle - ,X\rangle}\rr ,\qquad X\in\tgot,
$$
with $r=\dim(G/T)/2$.
\end{itemize}
\end{lemma}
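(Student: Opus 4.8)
The plan is to construct $\Rcal_\g$ explicitly via the classical restriction map on invariant functions and to identify it with an anti-symmetrization procedure, then verify uniqueness by a density argument. First I would recall the fundamental fact underlying everything: the Chevalley restriction theorem and its analytic counterpart, namely that restriction to $\t^*$ induces an isomorphism $\Ccal^\infty(\g^*)^G \xrightarrow{\sim} \Ccal^\infty(\t^*)^{W_G}$, and that multiplication by the anti-invariant polynomial $\Pi_{\g/\t}$ (pulled back to $\t^*$ via the scalar product identification $\g\cong\g^*$, $\t\cong\t^*$) carries $\Ccal^\infty(\t^*)^{W_G}$ into $\Ccal^\infty(\t^*)^{W_G\text{-alt}}$. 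Dualizing, one gets a candidate for $\Rcal_\g$ on distributions. Concretely, for $A\in\Dcal'(\g^*)^G$ I would define $\Rcal_\g(A)$ by declaring, for $\psi\in\Ccal^\infty_{cpt}(\t^*)^{W_G\text{-alt}}$, the pairing $\ll\Rcal_\g(A),\psi\rr$ in terms of $\ll A, \tilde\psi\rr$ where $\tilde\psi$ is the $G$-invariant extension of $\psi/\Pi_{\g/\t}$ (a smooth $W_G$-invariant function since $\psi$ is anti-invariant and hence divisible by $\Pi_{\g/\t}$, by the standard fact that $\Ccal^\infty(\t^*)^{W_G\text{-alt}}=\Pi_{\g/\t}\cdot\Ccal^\infty(\t^*)^{W_G}$), extended from $W_G\text{-alt}$ test functions to all of $\Ccal^\infty_{cpt}(\t^*)$ by composing with anti-symmetrization $\psi\mapsto\frac{1}{|W_G|}\sum_{w}\epsilon(w)w\cdot\psi$.

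The verification then breaks into three pieces. For the module property $\Rcal_\g(fA)=f|_{\t^*}\Rcal_\g(A)$ with $f\in\Ccal^\infty(\g^*)^G$, I would check it directly from the definition, using that multiplication by an invariant function commutes with restriction and with anti-symmetrization. For the Fourier transform identity on compactly supported $A$, I would invoke the Harish-Chandra / Weyl integration formula in the form that relates $\int_{G}e^{i\ll gA,X\rr}$-type averages to $\Pi_{\g/\t}$-weighted orbital integrals; more precisely, the key computation is that for the invariant distribution $A$, the function $X\mapsto\ll A,e^{i\ll-,X\rr}\rr$ is $W_G$-invariant on $\t$, and the relation $\ll\Rcal_\g(A),e^{i\ll-,X\rr}\rr=(i)^r\Pi_{\g/\t}(X)\ll A,e^{i\ll-,X\rr}\rr$ is exactly the statement that $\Rcal_\g$ intertwines Fourier transform with multiplication by $(i)^r\Pi_{\g/\t}$ after restriction — this is consistent with the definition because $\Pi_{\g/\t}$ is precisely the Jacobian-type factor appearing in the Weyl integration formula, so taking Fourier transforms of the defining relation for $\Rcal_\g$ and using that $\widehat{\psi/\Pi_{\g/\t}}$ relates to $\widehat\psi$ via the constant-coefficient differential operator $\Pi_{\g/\t}(i\partial)$ gives the claimed identity after matching the power of $i$.

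For uniqueness, I would argue that compactly supported invariant distributions are dense in $\Dcal'(\g^*)^G$ (in the weak topology), so any linear map satisfying the second property is determined on a dense subspace, hence everywhere by continuity; alternatively, the module property alone already pins down $\Rcal_\g$ once we know its value on a single generating distribution such as $\delta_0$, since $\Ccal^\infty(\g^*)^G\cdot\delta_0$ is not dense but combined with the Fourier condition (which fixes $\Rcal_\g$ on all compactly supported invariant distributions via their Fourier transforms, Fourier transform being injective) one gets uniqueness outright. I expect the main obstacle to be the careful bookkeeping in the Fourier transform identity: ensuring the factor $(i)^r\Pi_{\g/\t}(X)$ comes out with exactly the right power of $i$ and the right normalization, which requires being scrupulous about the $(2\pi)$-conventions in $\widehat\varphi$ from Section \ref{sec:fourier} and about whether $\Pi_{\g/\t}$ acts by $\Pi_{\g/\t}(X)$ or by the differential operator $\Pi_{\g/\t}(\partial)$ on the Fourier side. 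Everything else is a standard consequence of Chevalley restriction and the divisibility of anti-invariants by $\Pi_{\g/\t}$.
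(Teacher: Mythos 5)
The paper states this lemma without proof, calling it a ``classical fact,'' so there is no reference proof to compare against; I am evaluating your sketch on its own. The overall strategy --- dualize the smooth Chevalley restriction theorem together with divisibility of anti-invariants by $\Pi_{\g/\t}$, check the module property, check the Fourier identity, and get uniqueness at the end --- is the right shape. But the way you propose to verify the Fourier identity has a genuine reasoning flaw, and it is precisely at the step you flag as ``the main obstacle.'' Your definition pairs $A$ with $\tilde\psi$, the $G$-invariant extension to $\g^*$ of $\psi/\Pi_{\g/\t}$; after Fourier transform this involves $\widehat{\tilde\psi}$, a $G$-invariant function on $\g$, not $\widehat{\psi}$ or $\widehat{\psi/\Pi_{\g/\t}}$ on $\t$. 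The elementary fact that $\widehat{\psi}=\Pi_{\g/\t}(\pm i\partial)\,\widehat{\psi/\Pi_{\g/\t}}$ lives entirely on the torus side and says nothing about how $\widehat{\tilde\psi}\vert_\t$ relates to $\widehat{\psi/\Pi_{\g/\t}}$. That relation is exactly the content of Harish--Chandra's integral formula (equivalently, exact stationary phase for the orbit integrals $\int_G e^{i\langle g\mu,X\rangle}\,dg$), which you name but do not actually use in the computation. Your ``Jacobian'' heuristic is also off: the Jacobian in the Weyl integration formula on $\g$ is $\pm\Pi_{\g/\t}^2$, not $\Pi_{\g/\t}$; the single power of $\Pi_{\g/\t}$ together with the factor $i^r$ is specifically a Harish--Chandra phenomenon and cannot be recovered from the integration formula alone.

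Several secondary gaps remain. (i) Your defining pairing needs an explicit normalization constant which you postpone as ``bookkeeping'' but never determine; testing against equation (\ref{eq:Har}) with $A=\beta_\lambda$ and $\psi$ anti-invariant, the raw pairing $\langle A,\tilde\psi\rangle$ evaluates to $\dim(V_\lambda)\,\psi(\lambda)/\Pi_{\g/\t}(\lambda)=\psi(\lambda)/\Pi_{\g/\t}(\rho)$ by the Weyl dimension formula, whereas the desired answer is $|W_G|\,\psi(\lambda)$, so the constant $|W_G|\,\Pi_{\g/\t}(\rho)$ must be inserted (note that it depends on the chosen inner product, which your construction introduced and the lemma's characterization does not). (ii) The lemma asserts an \emph{isomorphism}, but you do not verify that $\Rcal_\g$ is bijective; an explicit inverse, sending $B$ to the invariant distribution pairing with $\varphi\in\Ccal^\infty_{cpt}(\g^*)^G$ via $\langle B,\Pi_{\g/\t}\cdot\varphi\vert_{\t^*}\rangle$ (suitably normalized, and extended by $G$-averaging), should be produced and checked. (iii) Well-definedness of $\Rcal_\g(A)$ as a distribution requires continuity of $\psi\mapsto\tilde\psi$ in the test-function topologies, i.e.\ the smooth Chevalley/Schwarz theorem, which is a nontrivial input you use silently. (iv) Your uniqueness-via-density argument tacitly assumes $\Rcal_\g$ is continuous, which is not among the hypotheses; the correct argument uses $G$-invariant cutoff functions $f$ and the module property to reduce to compactly supported $A$, which you gesture at in the ``alternatively'' clause but do not spell out.
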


\subsection{Asymptotics}\label{sec:asymptotics}

Let $\Dcal'(\ggot^*)^G$ be the space of $G$-invariant distributions on $\ggot^*$.
We will  study asymptotic expansions of sequences of distributions.

\begin{definition}Let $(\Theta_k)_{k\geq 1}$ and $(\theta_n)_{n\in \N}$ be two sequences in $\Dcal'(\ggot^*)^G$. We write
\begin{equation}\label{eq:asymptotics:definition}
\Theta_k\equiv k^{n_o}\sum_{n=0}^{\infty} \frac{1}{k^n}\theta_n
\end{equation}
for some $n_o\in\Z$ if for any test function $\varphi$ and any $N\in \N$,  we have
$$
\ll \Theta_k,\varphi\rr = k^{n_o}\sum_{n=0}^N \frac{1}{k^n}\ll \theta_n, \varphi\rr  + o(k^{n_o-N}).
$$
\end{definition}

In this text, we will allow each distribution $\theta_n$ to depend periodically of the integers $k$. This means that, for each $n$, 
there exists $D\geq 1$ and distributions $(\theta_\zeta)$ parameterized by the roots of unity $\{\zeta^D=1\}$ such 
that $\theta_n(k)=\sum_{\zeta^D=1} \zeta^k \theta_\zeta$ for all $k\geq 1$.

\begin{example}\label{ex:EML}
Consider the distributions ${\rm T}_k=\sum_{\lambda\in \N}\delta_{\lambda/k}$ ($k\geq 1$) on $\R$.
The Euler Maclaurin formula gives the expansion
$$
{\rm T}_k\ \equiv \ k\, 1_{[0,\infty[} + \frac{1}{2}\delta_0 - \sum_{n=1}^{\infty} \frac{1}{k^{2n-1}}\frac{b_{2n}}{(2n)!}\delta_0^{(2n-1)},
$$
where $b_{2n}$ are the Bernoulli numbers and $\delta_0^{(2n-1)}$ is the $(2n-1)$-th derivation of the Dirac distribution
$\delta_0$.

\end{example}

Let $d$ be a $C^{\infty}$ function  defined near the origin $0\in \g$.
 Consider the Taylor series
 $\sum_{n=0}^{\infty}d_n(X)$ of $d$ at the origin.
 Thus, $d_n$ is an homogeneous polynomial function of degree $n$ on $\ggot$.
We associate to the function $d$ the formal series
$$d(i\partial/k)=\sum_{n=0}^{\infty} k^{-n}d_n(i\partial)$$ of constant coefficient
differential operators $d_n(i\partial)$ on $\ggot^*$. Thus, if
$Q(k)=k^{n_o}\sum_{n=0}^{\infty} k^{-n} \theta_n$ is a formal series of distributions
on $\g^*$, the series $d(i\partial /k) Q(k)$ is the formal series of distributions
$k^{n_o}\sum_{n=0}^{\infty} k^{-n}s_n$ on $\g^*$, where
$s_n=\sum_{l+m=n}d_l(i\partial)\theta_m$.

\bigskip

We have the following  basic fact.

\begin{lemma}
The expansion $\Theta_k\equiv k^{n_o}\sum_{n=0}^{\infty} \frac{1}{k^n}\theta_n$ holds in $\Dcal'(\ggot^*)^G$ if and only if we have the
expansion $\Rcal_\g(\Theta_k)\equiv k^{n_o}\sum_{n=0}^{\infty} \frac{1}{k^n}\Rcal_\g(\theta_n)$ in
$\Dcal'(\tgot^*)^{W_G-alt}$.
\end{lemma}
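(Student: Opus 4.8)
The plan is to deduce the equivalence of the two asymptotic expansions directly from the two defining properties of the isomorphism $\Rcal_\g$ stated in the preceding lemma, together with the observation that $\Rcal_\g$ interacts well with the $\res$ operations and with the notion of asymptotic expansion from the definition. Since $\Rcal_\g$ is linear, it commutes with finite sums, so the only point to check is that for a fixed test function $\psi$ on $\t^*$ the pairing $\ll \Rcal_\g(\Theta_k),\psi\rr$ is controlled by pairings of $\Theta_k$ against test functions on $\g^*$, and conversely; and that the error terms transform correctly. I would isolate this in a small lemma: for every $\psi\in\Ccal^\infty_{cpt}(\t^*)$ there is $\varphi\in\Ccal^\infty_{cpt}(\g^*)$ with $\ll \Rcal_\g(A),\psi\rr=\ll A,\varphi\rr$ for all $A\in\Dcal'(\g^*)^G$, and symmetrically.

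First I would recall that $\Rcal_\g$ is a bijection $\Dcal'(\g^*)^G\to\Dcal'(\t^*)^{W_G\text{-alt}}$, so the statement is symmetric: it suffices to prove one implication, say that $\Theta_k\equiv k^{n_o}\sum_n k^{-n}\theta_n$ implies $\Rcal_\g(\Theta_k)\equiv k^{n_o}\sum_n k^{-n}\Rcal_\g(\theta_n)$, and then apply it to the inverse map $\Rcal_\g^{-1}$, which enjoys the analogous module property over $\Ccal^\infty(\g^*)^G$ restricted to $\t^*$ (one must check $\Rcal_\g^{-1}$ has such a property, which follows because $\Rcal_\g$ is a $\Ccal^\infty(\g^*)^G$-module map and the restriction map $\Ccal^\infty(\g^*)^G\to\Ccal^\infty(\t^*)^{W_G}$ is onto with a continuous section by Chevalley, or by explicit division by $\Pi_{\g/\t}$ away from walls combined with smoothness). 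Second I would realize $\Rcal_\g$ as an honest operation on distributions: for a compactly supported $G$-invariant $A$ the Fourier transform of $\Rcal_\g(A)$ on $\t$ is $(i)^r\Pi_{\g/\t}(X)\,\Fcal_{\g^*}(A)(X)$, and since $\Pi_{\g/\t}$ is a polynomial this shows $\Rcal_\g(A)$ is, up to the constant differential operator $(i)^r\Pi_{\g/\t}(i\partial)$ and restriction to $\t^*$, the push/pull of $A$; in particular testing $\Rcal_\g(\Theta_k)$ against $\psi$ amounts to testing $\Theta_k$ against $\Pi_{\g/\t}(i\partial)$ applied to an extension of $\psi$, which is again a legitimate test function on $\g^*$. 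Third, with such a $\varphi$ in hand, the hypothesis $\ll\Theta_k,\varphi\rr=k^{n_o}\sum_{n=0}^N k^{-n}\ll\theta_n,\varphi\rr+o(k^{n_o-N})$ immediately rewrites as $\ll\Rcal_\g(\Theta_k),\psi\rr=k^{n_o}\sum_{n=0}^N k^{-n}\ll\Rcal_\g(\theta_n),\psi\rr+o(k^{n_o-N})$, which is exactly the desired expansion; the periodic dependence of the $\theta_n$ on $k$ is preserved since $\Rcal_\g$ is $k$-independent and linear.

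The main obstacle I anticipate is the precise justification that pairing $\Rcal_\g(A)$ against a test function $\psi$ can always be rewritten as pairing $A$ against a genuine compactly supported smooth function on $\g^*$ — i.e. making the heuristic "$\Rcal_\g$ is multiplication by a polynomial on the Fourier side, hence a differential operator composed with restriction/extension" rigorous for distributions that are not compactly supported (the $\Theta_k$ here are not). The clean way is: the defining module property $\Rcal_\g(fA)=f|_{\t^*}\Rcal_\g(A)$ plus the Fourier identity pin down $\Rcal_\g$ uniquely, and one can write $\Rcal_\g$ globally as $A\mapsto \big((i)^r\Pi_{\g/\t}(i\partial)\,\tilde A\big)\big|_{\t^*}$ where $\tilde A$ is a suitable primitive/extension — but rather than belabor this I would simply invoke uniqueness: define a candidate operation via this formula on compactly supported $A$, check it satisfies both bullet points, conclude it equals $\Rcal_\g$ there, and extend the pairing identity $\ll\Rcal_\g(A),\psi\rr=\ll A,\varphi_\psi\rr$ to all $A$ by the module property applied to a cutoff $f\equiv 1$ near $\supp\varphi_\psi$. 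Once this bookkeeping lemma is established, the proof of the stated lemma is one line in each direction.
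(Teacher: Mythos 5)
The paper states this lemma without proof, so I'm evaluating your argument on its own terms. Your strategy is the right one and, in essence, correct: the whole content of the lemma is that $\Rcal_\g$ and $\Rcal_\g^{-1}$ transpose to linear maps between the test-function spaces $\Ccal^\infty_{cpt}(\tgot^*)^{W_G\text{-alt}}$ and $\Ccal^\infty_{cpt}(\ggot^*)^G$, so that testing $\Rcal_\g(\Theta_k)$ against $\psi$ is the same as testing $\Theta_k$ against $\varphi_\psi:=\Rcal_\g^t\psi$ and vice versa; once that is in hand, the asymptotic estimate is carried over term by term (including the periodic $k$-dependence, as you note). Your "bookkeeping lemma" is exactly this transposition fact, and your plan for proving it — establish the identity for compactly supported $A$ via the Fourier characterization, then extend to general $A\in\Dcal'(\ggot^*)^G$ by inserting a $G$-invariant cutoff $f$ and using the module property $\Rcal_\ggot(fA)=f\vert_{\tgot^*}\Rcal_\ggot(A)$ — does work, with the minor precision that $f$ must be chosen $\equiv 1$ simultaneously on $\supp\varphi_\psi$ and on the $G$-saturation of $\supp\psi$ (both compact, so this is no obstruction). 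For the converse direction you correctly observe one needs the inverse to satisfy an analogous module property over $\Ccal^\infty(\tgot^*)^{W_G}$, which follows from surjectivity of the Chevalley restriction $\Ccal^\infty(\ggot^*)^G\to\Ccal^\infty(\tgot^*)^{W_G}$.

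A slightly shorter route that avoids the Fourier detour and the cutoff bookkeeping: since $\Rcal_\g$ is a continuous linear \emph{isomorphism} between (closed subspaces of) distribution spaces, and since $\Dcal'$ is reflexive with $\Ccal^\infty_{cpt}$ as its strong dual, the transpose $\Rcal_\g^t$ is automatically a continuous linear isomorphism between the corresponding test-function spaces (the $G$-invariant and $W_G$-anti-invariant test functions, identified with the duals by averaging). The equivalence of the two asymptotic expansions is then immediate in both directions. Your more hands-on construction makes the transpose explicit (it has to send $\psi$ to $|W_G|\,\psi/\vol(G\cdot-)$ on regular orbits, which is smooth precisely because anti-invariant functions are divisible by $\Pi_{\g/\t}$), and that is a legitimate and arguably more informative variant, but the abstract duality argument dispenses with the case-checking you flag as "the main obstacle."
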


Our asymptotic expansions will arise from Taylor series.

Let $\tau$ be a  smooth  function on $\g$.
Assume  that $\tau$ and all its derivatives have at most polynomial growth.
 Consider the Taylor  series $\sum_{n=0}^\infty\tau_n(X)$ of $\tau$ at the origin.
If $b(X)$ is a smooth function on $\g$ with at most polynomial growth, then
$f_k(X)=b(X)\tau(X/k)$ ($k\geq 1$)
defines a tempered distribution on $\g$.
Similarly,  as $\tau_n(X)$ is a polynomial function,
$g_n(X)=b(X)\tau_n(X)$ defines a tempered distribution on $\g$.
We note the following result.
\begin{proposition}\label{prop:technical-asymptotics}
When $k$ tends to $\infty$,  we have  the asymptotic expansion
$$
\Fcal_\g(f_k)\equiv \sum_{n=0}^{\infty} \frac{1}{k^n}\Fcal_\g(g_n) =\tau(i\partial/k)\Fcal_\g(b).
$$
\end{proposition}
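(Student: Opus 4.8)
The plan is to reduce everything to the scalar (one-variable-at-a-time) Taylor-with-remainder estimate, pairing against a test function and controlling the error term uniformly. First I would fix a test function $\varphi\in\Ccal^{\infty}_{cpt}(\g^*)$ and write out the pairing using the definition of $\Fcal_\g$ on tempered distributions: $\ll\Fcal_\g(f_k),\varphi\rr=\int_\g b(X)\tau(X/k)\widehat{\varphi}(X)\,dX$, and similarly $\ll\Fcal_\g(g_n),\varphi\rr=\int_\g b(X)\tau_n(X)\widehat{\varphi}(X)\,dX$. Since $\varphi$ is compactly supported, $\widehat{\varphi}$ is Schwartz, so against the at-most-polynomial growth of $b$ and of the $\tau_n$ all these integrals converge absolutely; this is the routine part.

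The heart of the matter is the Taylor estimate. For fixed $N$, write $\tau(Y)=\sum_{n=0}^{N}\tau_n(Y)+R_N(Y)$ where, by Taylor's theorem with integral remainder applied along the segment $[0,Y]$, one has $|R_N(Y)|\le C_N\,|Y|^{N+1}\sup_{|Z|\le|Y|}\big(\text{sup of the }(N{+}1)\text{-st derivatives of }\tau\big)$. The hypothesis that $\tau$ \emph{and all its derivatives} have at most polynomial growth is exactly what is needed here: it gives $|R_N(Y)|\le C_N'\,|Y|^{N+1}(1+|Y|)^{p_N}$ for some integer $p_N$. Substituting $Y=X/k$ yields $|R_N(X/k)|\le C_N'\,k^{-(N+1)}|X|^{N+1}(1+|X|/k)^{p_N}\le C_N''\,k^{-(N+1)}|X|^{N+1}(1+|X|)^{p_N}$ for all $k\ge 1$. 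Therefore
\begin{equation*}
\Big|\ll\Fcal_\g(f_k),\varphi\rr-\sum_{n=0}^{N}\frac{1}{k^n}\ll\Fcal_\g(g_n),\varphi\rr\Big|
=\Big|\int_\g b(X)\,R_N(X/k)\,\widehat{\varphi}(X)\,dX\Big|
\le \frac{C_N''}{k^{N+1}}\int_\g |b(X)|\,|X|^{N+1}(1+|X|)^{p_N}|\widehat{\varphi}(X)|\,dX,
\end{equation*}
where I used the exact homogeneity $\tau_n(X/k)=k^{-n}\tau_n(X)$ to pull out the powers of $k$. The remaining integral is finite (Schwartz times polynomial) and independent of $k$, so the difference is $O(k^{-(N+1)})=o(k^{-N})$. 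Since $N$ is arbitrary, this is precisely the asserted asymptotic expansion $\Fcal_\g(f_k)\equiv\sum_{n\ge0}k^{-n}\Fcal_\g(g_n)$ in the sense of Definition \ref{eq:asymptotics:definition} (with $n_o=0$).

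Finally, the identification with $\tau(i\partial/k)\Fcal_\g(b)$ is a formal bookkeeping step: by the construction recalled just before the statement, $\tau(i\partial/k)$ is the formal series $\sum_n k^{-n}\tau_n(i\partial)$, and under Fourier transform multiplication by the polynomial $\tau_n(X)$ on $\g$ corresponds to the constant-coefficient operator $\tau_n(i\partial)$ on $\g^*$, i.e.\ $\Fcal_\g(g_n)=\Fcal_\g(\tau_n(\cdot)b)=\tau_n(i\partial)\Fcal_\g(b)$; summing the series gives the claimed equality term by term. The only genuine obstacle is making the remainder bound uniform in $k$, and the polynomial-growth hypothesis on all derivatives of $\tau$ is what removes it; everything else is routine. \qed
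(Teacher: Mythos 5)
Your proposal is correct and follows essentially the same route as the paper's own proof: split $\tau$ into the degree-$\leq N$ Taylor polynomial plus a remainder, use the integral-remainder form of Taylor's theorem together with the polynomial-growth hypothesis on all derivatives of $\tau$ to get the crucial factorization $R_N(X/k)=O(k^{-(N+1)})\cdot(\text{polynomial in }X)$, and then absorb the polynomial against the Schwartz function $\widehat{\varphi}$. The paper phrases the remainder as $\tau_{>N}(X)=\sum_{|\alpha|=N+1}X^{\alpha}D_{\alpha}(X)$ with $D_{\alpha}$ polynomially bounded (a multi-index bookkeeping variant of your estimate), but the substance is identical.
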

Later, we will need a uniform  version with parameters of this proposition. So we give the proof.
\begin{proof}
Let $\varphi$ be a smooth function on $\g^*$ with compact support.
Thus $\ll \Fcal_\g(f_k), \varphi\rr=\int_\g b(X)\tau(X/k) \widehat{\varphi}(X)dX$.

Let $N$ be a positive integer, and
$\tau_{\leq N}$ be the sum of the Taylor coefficients
$\tau_n$, up to degree $N$,
so $\tau=\tau_{\leq N}+\tau_{>N}$.
We write $I_k=\int_\g b(X)\tau(X/k) \widehat{\varphi}(X)dX$ as
$I_k^0+R_k$ with
$$
I_k^0=\int_\g b(X)\tau_{\leq N}(X/k) \widehat{\varphi}(X)dX=\sum_{k=0}^N \frac{1}{k^n}\int_\g b(X)\tau_{n}(X) \widehat{\varphi}(X)dX
$$
and $R_k= \int_\g b(X)\tau_{>N}(X/k) \widehat{\varphi}(X)dX$.

In multi-index notation, we  may write
(via an  integral formula depending of $\tau$ and its derivatives)
$\tau_{>N}(X)=\sum_{\alpha,|\alpha|=N+1} X^\alpha D_\alpha(X)$
with $D_\alpha(X)$  bounded by a polynomial function of $X$.
 If $\tau(X)$ depends smoothly of a parameter $x$, we can construct
 $D_\alpha(X)$ depending smoothly of $x$.
Thus $$R_k=\frac{1}{k^{N+1}}
 \sum_{\alpha,|\alpha|=N+1} \int_\g  b(X) X^\alpha D_\alpha(X/k)\widehat{\varphi}(X)dX.$$
 Since $\|X/k\|\leq \|X\|$, we can bound
 $b(X)X^\alpha D_\alpha(X/k)$ by a polynomial function of $X$.
 Since  $\widehat{\varphi}(X)$  is rapidly decreasing,
 we see that $|R_k|\leq \frac{c_N}{k^{N+1}}$.
\end{proof}


It is clear that if $b(X,k)=\sum_{m\in F} k^m b_m(X)$
is a sum over a finite set $F\subset \Z$
of smooth functions $b_m(X)$  with at most  polynomial growth,
 and $\tau(X)$ a function with polynomial growth as well as all its derivatives,
we can
obtain the asymptotic expansion of $\int_\g b(X,k)\tau(X/k)\widehat{\varphi}(X) dX$
by summing over $m\in F$  the asymptotic expansions of
$\int_\g b_m(X)\tau(X/k)\widehat{\varphi}(X) dX$
multiplied by $k^m$. So this is given by the Laurent series (in $1/k$)
 $$\sum_{m\in F}\sum_{n=0}^{\infty} k^m \frac{1}{k^n}
 \int_\g b_m(X) \tau_n(X)\widehat{\varphi}(X) dX.$$
We write this somewhat informally as

\begin{equation}\label{eq:informal}
\int_\g b(X,k)\tau(X/k)\widehat{\varphi}(X) dX\equiv
\int_\g b(X,k) \sum_{n=0}^{\infty}\tau_n(X/k)\widehat{\varphi}(X) dX.
\end{equation}
In short, we replace $\tau$ by its formal Taylor series $\sum_{n=0}^{\infty} \tau_n(X)$
and keep $b(X,k)$ as it is.

\subsection{Kirillov formula}
Recall that any coadjoint orbit $\Ocal\subset \g^*$ is provided with the Kirillov-Kostant-Souriau 
symplectic form $\Omega_\Ocal$ that is normalized as follows. We have
$\Omega_\Ocal\vert_\xi(X\cdot\xi,Y\cdot\xi)= \langle \xi, [X,Y]\rangle$, for $\xi\in\Ocal$ and $X,Y\in\ggot$.
Let $n_\Ocal=\frac{1}{2}\dim \Ocal$.
\begin{definition}
We denote by $\beta_\Ocal$ the distribution on $\g^*$ defined by the relation
$$
\ll\beta_\Ocal, \varphi\rr=\int_{\xi\in\Ocal}\varphi(\xi)\frac{(\Omega_\Ocal)^{n_\Ocal}}{(2\pi)^{n_\Ocal} n_\Ocal!}
$$
for any smooth function $\varphi$ on $\ggot^*$.
\end{definition}

We have chosen a system $\Delta^+\subset \t^*$ of positive roots,
and let $\rho\in \t^*$ be the corresponding element.
 We consider the positive Weyl chamber $\t^*_{\geq 0}$ with interior
 $\t^*_{> 0}$.
Identify $\wG$ to the discrete set  $A_G=(\rho+\Lambda)\cap \t^*_{> 0}$
of positive admissible regular elements of $\t^*$.

If $\lambda\in A_G$, we denote by $\chi_\lambda(g)$ the trace of the action of $g\in G$
 on the irreducible representation $V_\lambda$  parameterized by
$\lambda$.
We have the fundamental identity
\begin{equation}\label{eq:kirillov-V-lambda}
\chi_\lambda(e^X)j_{\g}^{1/2}(X) = \ll\beta_\lambda, e^{i\langle - ,X\rangle}\rr
\end{equation}
where $j_{\g}(X)=\det_{\g}\left(\frac{e^{X/2}-e^{-X/2}}{X}\right)$, and $\beta_\lambda:=\beta_{G\lambda}$.

\medskip

Let $\Rcal_\ggot: \Dcal'(\ggot^*)^G\longrightarrow \Dcal'(\tgot^*)^{W_G-alt}$ be the isomorphism introduced in Section \ref{sec:iso-R-g}.
If $\lambda\in A_G$, we have
\begin{equation}\label{eq:Har}
\Rcal_\ggot(\beta_\lambda)=\sum_{w\in W_G}\epsilon(w)\delta_{w\lambda}
\end{equation}
where $\delta_{w\lambda}$ is the $\delta$ function at $w\lambda$.

%

%
%

\subsection{Quasi-polynomials and asymptotics}
We recall the notion of piecewise quasi-polynomial functions  (for more details see \cite{pep-vergne:asymptotics}).

Let $\tilde{\Lambda}$ be the lattice generated by $\Lambda$ and $\rho$. We consider the vector space 
$E=\tgot^*\times \R$, which is equipped with the lattice $\tilde{\Lambda}\times \Z$.

A function $m:\tilde{\Lambda}\times\Z\to \C$ is periodic if there exists a positive integer D such that 
$m(x_0 + Dx) = m(x_0)$ for $x,x_0\in  \tilde{\Lambda}\times\Z$. By definition, the algebra of quasi-polynomial 
functions on $\tilde{\Lambda}\times\Z$ is generated by polynomials and periodic functions on $\tilde{\Lambda}\times\Z$.

To each closed rational polyhedron $P\subset \tgot^*$, we associate
\begin{itemize}
\item the sub-space $E_P\subset E$ generated by $(\xi,1),\xi\in P$,
\item the lattice $\tilde{\Lambda}_P:=(\tilde{\Lambda}\times \Z) \cap E_P$ in $E_P$,
\item the cone $C_P:=\{(t\xi,t), t\geq 0, \xi\in P\}\subset E_P$,
\item $[C_P]$, the characteristic function of $C_P$.
\end{itemize}

If $m_P$ is a quasi-polynomial function on the lattice $\tilde{\Lambda}_P$, we can form the product $m_P[C_P]$ that defines a function on 
$\tilde{\Lambda}\times\Z_{>0}$ as follows
$$
m_P[C_P](\lambda,k)=
\begin{cases}
   0\qquad\hspace{13mm} {\rm if}\ \lambda/k\notin P,\\
   m_P(\lambda,k)
   \qquad {\rm if}\  \lambda/k\in P.
\end{cases}
$$

\begin{definition}
A function $m:\tilde{\Lambda}\times \Z_{>0}\to \C$ is called a piecewise quasi-polynomial function if there is a collection
$\Acal$ of closed rational polyhedrons in $\tgot^*$ and a collection of quasi-polynomial functions\footnote{Each quasi-polynomial $m_P$ is defined on 
the sub-lattice $\tilde{\Lambda}_P\subset\tilde{\Lambda}\times \Z$. } $(m_P)_{P\in\Acal}$
such that
\begin{equation}\label{eq:piecewise-quasi-polynomial}
m=\sum_{P\in\Acal}  m_{P}[C_P].
\end{equation}
Here $\Acal$ is not necessarily finite but, in order that (\ref{eq:piecewise-quasi-polynomial}) makes sense, we assume that for any compact $\Kcal\subset\tgot^*$ the set $\{P\in\Acal, P\cap \Kcal\neq\emptyset\}$ is finite.

We denote by $\Scal(\tilde{\Lambda})$ the group of piecewise quasi-polynomial functions on $\tilde{\Lambda}\times \Z_{>0}$.
\end{definition}

To a piecewise quasi-polynomial function $m\in\Scal(\tilde{\Lambda})$, we associate the family of invariant distributions on $\g^*$
$$
\Theta_k(m)=k^{r}\sum_{\lambda\in A_G}m(\lambda,k)\beta_{\lambda/k},\quad k\geq 1.
$$

In a companion article \cite{pep-vergne:asymptotics}, we proved the following result.
\begin{prop}\label{prop:asymptotic-quasi-polynomial}
The family $\Theta_k(m)$ admits an asymptotic expansion
$$
\Theta_k(m)\equiv {\rm AS}_k(m):=\sum_{n\in \Z} \frac{1}{k^n}\theta_n(k).
$$
Here $(\theta_n(k))_{n\in\Z}$ is a collection of invariant distributions on $\g^*$, depending periodically of $k$, and such that $\theta_n(k)=0$ if $n< n_0^m$.
\end{prop}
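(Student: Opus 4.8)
The plan is to reduce everything, via the isomorphism $\Rcal_\g$ of Section \ref{sec:iso-R-g}, to a statement about distributions on $\t^*$, and then to a local statement near each lattice point that is handled by the Euler--Maclaurin-type expansion of Proposition \ref{prop:technical-asymptotics}. First I would apply $\Rcal_\g$: by the Lemma relating the two expansions, it suffices to prove that $\Rcal_\g(\Theta_k(m))$ has an asymptotic expansion of the claimed form. Using \eqref{eq:Har}, $\Rcal_\g(\beta_{\lambda/k}) = \sum_{w\in W_G}\epsilon(w)\delta_{w\lambda/k}$, so $\Rcal_\g(\Theta_k(m)) = k^r\sum_{\lambda\in A_G} m(\lambda,k)\sum_{w}\epsilon(w)\delta_{w\lambda/k}$. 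By $W_G$-anti-invariance this can be rewritten as $k^r\sum_{w}\epsilon(w)\, w_*\big(\sum_{\lambda\in A_G} m(\lambda,k)\delta_{\lambda/k}\big)$, so it is enough to treat a single scalar distribution $D_k = \sum_{\lambda\in\tilde\Lambda} \tilde m(\lambda,k)\delta_{\lambda/k}$ attached to a piecewise quasi-polynomial $\tilde m$ on $\tilde\Lambda\times\Z_{>0}$ (the awkward shift by $\rho$ and the restriction to $A_G$ being absorbed into the coefficient function and the choice of lattice).

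Next I would use the decomposition $m = \sum_{P\in\Acal} m_P[C_P]$. Since the sum is locally finite, when we pair $D_k$ against a fixed test function $\varphi$ only finitely many $P$ contribute, so it suffices to obtain the expansion for a single term $m_P[C_P]$, i.e. for the distribution $k^{?}\sum_{\lambda/k\in P} m_P(\lambda,k)\delta_{\lambda/k}$ where $m_P$ is a genuine quasi-polynomial on the lattice $\tilde\Lambda_P$. Writing $m_P$ as a finite sum of terms $\zeta^\lambda p(\lambda,k)$ with $\zeta$ a root of unity and $p$ polynomial (this is exactly the definition of quasi-polynomial), and noting that the factor $\zeta^k$-type periodic dependence is precisely what is allowed in the statement (``depending periodically of $k$''), I would reduce to: a polynomial weight $p$ and a character $\zeta$ on a full-dimensional rational polyhedron $P$ (lower-dimensional $P$ are handled the same way on the affine span, contributing to more negative powers of $k$ only in the trivial sense of being genuinely lower order in the ambient space). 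The key computation is then the asymptotic expansion of $\sum_{\lambda\in\tilde\Lambda_P,\ \lambda/k\in P} \zeta^\lambda\, p(\lambda,k)\,\varphi(\lambda/k)$. This is a Riemann-sum-with-polynomial-weight over a dilated polytope, and its expansion in powers of $1/k$ is the classical Euler--Maclaurin formula for polytopes (as in Example \ref{ex:EML} and the Guillemin--Sternberg reference), which one can derive here from Proposition \ref{prop:technical-asymptotics} applied to $\tau$ a suitable smoothed characteristic function of the cone $C_P$ combined with Poisson summation to account for $\zeta$ and the lattice $\tilde\Lambda_P$.

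The main obstacle is bookkeeping rather than analysis: one must check that the leading exponent $n_0^m$ is well-defined (i.e. that, although $\Acal$ is infinite, the collection of leading exponents of the contributing pieces is bounded below on each compact set, which follows from local finiteness plus the fact that each polyhedral piece of dimension $j$ contributes a leading term of order $k^{r + j}$ at worst) and that the periodic-in-$k$ distributions $\theta_n(k)$ assemble consistently across overlapping polyhedra. I would organize this by fixing the test function $\varphi$ first, reducing to the finite subcollection meeting $\operatorname{supp}\varphi$, and only afterwards verifying that the resulting $\theta_n(k)$ are independent of the choices and globally defined as elements of $\Dcal'(\g^*)^G$. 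The positivity statement $\theta_n(k)=0$ for $n<n_0^m$ then follows by reading off the lowest power of $k$ produced by the Euler--Maclaurin expansion of each piece, the lowest being $k^{r+\dim P}$ from the top-dimensional polyhedra carrying a nonzero top-degree part of $m_P$.
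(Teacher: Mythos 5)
The paper does not prove Proposition \ref{prop:asymptotic-quasi-polynomial} here; it cites the companion article \cite{pep-vergne:asymptotics} for it, so there is no in-text argument to compare against. Your reductions --- push to $\t^*$ via $\Rcal_\g$ using \eqref{eq:Har}, restrict to a fixed test function so that local finiteness of $\Acal$ makes the sum $\sum_P m_P[C_P]$ finite, and split each quasi-polynomial $m_P$ on $\tilde\Lambda_P\subset\tilde\Lambda\times\Z$ into polynomial-times-character pieces (the $k$-component of the character producing exactly the allowed periodicity of $\theta_n(k)$) --- are sound and are the bookkeeping one expects.

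The genuine gap is at the decisive analytic step. You assert that the expansion of $\sum_{\lambda/k\in P}\zeta^\lambda p(\lambda,k)\varphi(\lambda/k)$ can be derived from Proposition~\ref{prop:technical-asymptotics} applied to a ``suitable smoothed characteristic function'' of $C_P$. Proposition~\ref{prop:technical-asymptotics} is a Taylor-expansion statement; it requires $\tau$ smooth with polynomially bounded derivatives, and it reproduces only the Taylor coefficients of $\tau$ at $0$. The indicator of a polyhedron is discontinuous across its boundary, and it is precisely that discontinuity which creates the Euler--Maclaurin correction terms: in Example~\ref{ex:EML} the contributions $\tfrac12\delta_0$ and $-\sum_{n\ge1}\frac{b_{2n}}{(2n)!\,k^{2n-1}}\delta_0^{(2n-1)}$ are entirely produced by the jump of $1_{[0,\infty[}$ at $0$ and vanish identically if one replaces $1_{[0,\infty[}$ by any smooth function. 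So ``smoothing'' destroys the very terms you need. What is actually required is a standalone Euler--Maclaurin theorem for weighted, character-twisted lattice sums over dilated rational polyhedra (of Guillemin--Sternberg type, but more general: $\zeta\neq1$, $P$ possibly unbounded or lower-dimensional, weight depending on $k$ as well as $\lambda$). Proving that is the real content of \cite{pep-vergne:asymptotics}, and it is not a corollary of anything established in this paper. If you want a self-contained argument, you must prove that lemma directly --- Poisson summation over $\tilde\Lambda_P$ combined with the singular expansion of the Fourier transform of $[C_P]$ along the conormals to its faces is a workable route, but it is a real piece of analysis, not a one-line invocation of Proposition~\ref{prop:technical-asymptotics}.

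A minor further point: in estimating the leading exponent you track only $r+\dim P$, forgetting that after the substitution $\lambda\approx k\xi$ the polynomial factor $p(\lambda,k)$ contributes an additional $k^{\deg p}$. This does not threaten the existence of $n_0^m$ (for a fixed test function only finitely many $P$ contribute, so the minimum exists), but the leading order you announce is incomplete.
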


For our computation, we need also a variation of Proposition \ref{prop:asymptotic-quasi-polynomial}.
Let $\rm j : \ggot\to \C$ be an invariant analytic function such that its Fourier transform
$B_{{\rm j}}:=\Fcal_\g({\rm j})$ is a compactly supported measure. We denote by 
$B_{\rm j}^k=\res_{\g^*}(k)B_{\rm j}$.

We consider now the family of distributions $B_{\rm j}^k\star\Theta_k(m)$ where $\star$ denotes the convolution.

\begin{prop}\label{prop:asymptotic-quasi-polynomial-star-B}
The family $B_{\rm j}^k\star\Theta_k(m)$ admits the asymptotic expansion
$$
B_{\rm j}^k\star\Theta_k(m)\equiv {\rm j}(i\partial/k){\rm AS}_k(m).
$$
\end{prop}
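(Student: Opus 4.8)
The plan is to reduce Proposition~\ref{prop:asymptotic-quasi-polynomial-star-B} to Proposition~\ref{prop:asymptotic-quasi-polynomial} together with the elementary asymptotic computation of Proposition~\ref{prop:technical-asymptotics}, working on the Fourier transform side via the isomorphism $\Rcal_\g$. First I would recall that by Proposition~\ref{prop:asymptotic-quasi-polynomial} we already have $\Theta_k(m)\equiv {\rm AS}_k(m)=\sum_n k^{-n}\theta_n(k)$, so the content of the statement is entirely about what the operation $A\mapsto B_{\rm j}^k\star A$ does to an asymptotic expansion: namely, that it acts as the differential operator ${\rm j}(i\partial/k)$ on the level of expansions. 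Since convolution becomes multiplication under Fourier transform, and since $\Fcal_{\g^*}(B_{\rm j}^k)(X)=\Fcal_{\g^*}(\res_{\g^*}(k)B_{\rm j})(X)={\rm j}(X/k)$ — using $\Fcal_{\g^*}(B_{\rm j})={\rm j}$ and the behavior of $\res$ under Fourier transform — the claim at the level of Fourier transforms is exactly that multiplying the (generalized) Fourier transform of $\Theta_k(m)$ by ${\rm j}(X/k)$ corresponds, asymptotically, to applying $\sum_n k^{-n}{\rm j}_n(i\partial)$ to ${\rm AS}_k(m)$. This is precisely the mechanism of the informal identity~(\ref{eq:informal}): replace the analytic function ${\rm j}$ by its formal Taylor series $\sum_n {\rm j}_n(X)$ evaluated at $X/k$, and keep the rest as is.

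The key steps, in order, are: (1) establish that $B_{\rm j}^k\star\Theta_k(m)$ is a well-defined tempered distribution for each $k$ — here one uses that $B_{\rm j}$ is a \emph{compactly supported} measure, so $B_{\rm j}^k$ has support shrinking to $0$ and the convolution with the locally finite sum $\Theta_k(m)$ makes sense; (2) test against $\varphi\in\Ccal^\infty_{cpt}(\g^*)$ and move the convolution onto $\varphi$, so that $\ll B_{\rm j}^k\star\Theta_k(m),\varphi\rr = \ll \Theta_k(m), \check B_{\rm j}^k\star\varphi\rr$ (with $\check{}$ the reflection), or equivalently pass to Fourier transforms and write $\ll B_{\rm j}^k\star\Theta_k(m),\varphi\rr$ as an integral involving ${\rm j}(X/k)$, $\widehat\varphi(X)$, and the generalized Fourier transform of $\Theta_k(m)$; (3) apply the Taylor expansion argument of Proposition~\ref{prop:technical-asymptotics} (in the parametrized/uniform form the authors emphasize they proved for exactly this purpose), splitting ${\rm j}(X/k)={\rm j}_{\leq N}(X/k)+{\rm j}_{>N}(X/k)$ and bounding the remainder by $c_N k^{-(N+1)}$ using that the relevant "$b(X,k)$"-type factor — here the density coming from $\Theta_k(m)$ tested against $\widehat\varphi$ — has the needed polynomial-growth/rapid-decay structure; (4) collect the principal terms into $\sum_n k^{-n}{\rm j}_n(i\partial)$ applied to the expansion of $\Theta_k(m)$, i.e. ${\rm j}(i\partial/k){\rm AS}_k(m)$, using the definition of $d(i\partial/k)$ acting on a formal series given just before Proposition~\ref{prop:technical-asymptotics}.

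The main obstacle I anticipate is step~(3): making the remainder estimate rigorous when the "amplitude" is not a single smooth function of polynomial growth but the pairing of the (highly oscillatory, $k$-dependent) distribution $\Theta_k(m)$ against $\check B_{\rm j}^k\star\varphi$. Concretely, one must show that the error term is uniform in $k$, which requires controlling how $\Theta_k(m)$ interacts with the Taylor remainder $X^\alpha D_\alpha(X/k)$ of ${\rm j}$ — this is where the "uniform version with parameters" of Proposition~\ref{prop:technical-asymptotics} is invoked, treating the orbit-integration variable (or the $\res$-parameter) as the parameter $x$ so that the bounding polynomials and the functions $D_\alpha$ depend smoothly on it. The cleanest route is probably to first reduce via the Lemma relating expansions of $\Theta_k$ and $\Rcal_\g(\Theta_k)$ to the torus picture, where $\Rcal_\g(\Theta_k(m))$ is a weighted sum of $\delta_{w\lambda/k}$'s by~(\ref{eq:Har}), so that the convolution and the integral against $\widehat\varphi$ become genuinely of the form treated in Proposition~\ref{prop:technical-asymptotics} with $b$ built from the (piecewise quasi-polynomial) multiplicities; the $\Rcal_\g$-equivariance under multiplication by invariant functions (first bullet of the Lemma in Section~\ref{sec:iso-R-g}) and the fact that ${\rm j}$ is invariant ensure compatibility with $\Rcal_\g$ throughout. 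Once the torus-level estimate is in hand, transporting it back through $\Rcal_\g^{-1}$ gives the statement.
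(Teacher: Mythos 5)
Your outline gets the right starting point but then commits to the wrong expansion, and the obstacle you flag in step~(3) is a genuine gap that your proposed fix does not resolve.

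What you do correctly: your step~(2), moving the convolution onto the test function, is exactly where the paper begins. Writing
$\ll B_{\rm j}^k\star\Theta_k(m),\varphi\rr=\ll\Theta_k(m),\varphi_k\rr$
with $\varphi_k(\xi)=\int_{\g^*}B_{\rm j}(\xi')\varphi(\xi+\xi'/k)\,d\xi'$ (which is your $\check B^k_{\rm j}\star\varphi$) is the key observation. Note also that $\varphi_k$ is supported, \emph{uniformly in $k\geq 1$}, in a fixed ball because $B_{\rm j}$ has compact support and $\|\xi'/k\|\leq\|\xi'\|$.

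Where you go wrong: at step~(3) you want to pass to the Fourier side, write $\Fcal_{\g^*}(B_{\rm j}^k)(X)={\rm j}(X/k)$, and invoke Proposition~\ref{prop:technical-asymptotics} by Taylor expanding ${\rm j}$. But Proposition~\ref{prop:technical-asymptotics} requires the amplitude $b(X)$ to be a smooth function of at most polynomial growth. The Fourier transform of $\Theta_k(m)$ is not such a function: it is a distribution (on the torus side, a nonconvergent exponential sum $\sum_\lambda c_\lambda e^{i\langle\lambda/k,X\rangle}$). Reducing through $\Rcal_\g$ to the torus and treating the orbit variable as a smooth parameter, as you propose, does not cure this — it just re-expresses the same distributional object. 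So Proposition~\ref{prop:technical-asymptotics} is simply not applicable here, and the ``uniform version with parameters'' the authors mention is designed for integration over a compact manifold $M$ (Theorems~\ref{asympt1} and~\ref{theo-asymptotics-beta}), not for this situation.

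What the paper does instead is much simpler and stays entirely on the $\g^*$ side: Taylor expand the \emph{test function} $\varphi$ in the small variable $\eta=\xi'/k$,
$\varphi(\xi+\eta)=\sum_{n=0}^N d_n\varphi(\xi)(\eta)+r_N(\xi,\eta)$, with a uniform bound $|r_N(\xi,\eta)|\leq c_N\|\eta\|^{N+1}$. Integrating the degree-$n$ term against $B_{\rm j}$ produces $\frac{1}{k^n}\,{\rm j}_n(-i\partial)\varphi(\xi)$ because the moments of $B_{\rm j}$ are the Taylor coefficients of ${\rm j}$; so ${\rm j}$'s Taylor series does enter, but via moments of $B_{\rm j}$ tested against $\varphi$, never by trying to multiply a distributional Fourier transform. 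The remainder $R^N_k=\varphi_k-\sum_{n\leq N}k^{-n}{\rm j}_n(-i\partial)\varphi$ is then $O(k^{-(N+1)})$ in sup-norm and has support in a fixed ball. Pairing with $\Theta_k(m)$ therefore involves only the finitely many $\lambda$ with $\|\lambda\|/k\leq R$, and $\sum_{\|\lambda\|/k\leq R}|m(\lambda,k)|\vol(G\lambda)$ grows at most polynomially in $k$ because $m$ is piecewise quasi-polynomial; this closes the estimate. Your proposal is missing this switch of which object to Taylor expand, and without it the argument does not go through.
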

\begin{proof}Let $\varphi$ be a test function on $\g^*$. Let $R>0$ such that the ball $\{\xi\in\g^*, \|\xi\|\leq R/2\}$
contains the compact supports of $\varphi$ and $B_{\rm j}$.

We start with the relation
$$
\ll B_{\rm j}^k\star\Theta_k(m),\varphi \rr=\ll \Theta_k(m), \varphi_k\rr
$$
where
$$
\varphi_k(\xi)=\int_{\g^*}B_{\rm j}(\xi')\varphi(\xi+\xi'/k),\quad k\geq 1
$$
are smooth functions supported on the ball $\{\xi\in\g^*, \|\xi\|\leq R\}$. Let us write the Taylor series of $\varphi$ at $\xi$:
$\varphi(\xi+\eta)=\varphi(\xi)+\sum_{n=1}^N d_n\varphi(\xi)(\eta) + r_N(\xi,\eta)$. Here $\eta\mapsto d_n\varphi(\xi)(\eta)$ is an homogeneous polynomial of degree $n$ depending smoothly of the variable $\xi$, and $r_N$ is a smooth function
of $(\xi,\eta)$ such that
\begin{equation}\label{eq:maj-r-N}
|r_N(\xi,\eta)|\leq c_N \|\eta\|^{N+1},\quad \forall (\xi,\eta)\in \g^*\times\g^*,
\end{equation}
for some $c_N>0$.

If we write the Taylor series of ${\rm j}$ at $0$, ${\rm j}(X)=\sum_{n=0}^\infty{\rm j}_n(X)$, we see that
$$
\int_{\g^*}B_{\rm j}(\eta)d_n\varphi(\xi)(\eta)={\rm j}_n(-i\partial)\varphi(\xi).
$$
So the functions $\varphi_k$ admit the following description
$$
\varphi_k=\sum_{n=0}^N\frac{1}{k^n}{\rm j}_n(-i\partial)\varphi + R^N_k
$$
where $R^N_k(\xi)=\int_{\g^*}B_{\rm j}(\xi')r_N(\xi,\xi'/k)$. Thanks to (\ref{eq:maj-r-N}), we see that
there exists $C_N$ such that $|R^N_k(\xi)|\leq C_N/k^{N+1}$, $\forall\xi\in \g^*$. We check also that, for any $k\geq 1$, the support of $R_N^k$ is contained in the ball $\{\xi\in\g^*, \|\xi\|\leq R\}$.

Finally we obtain
\begin{equation}\label{eq=B-j-theta-k}
\ll B_{\rm j}^k\star\Theta_k(m),\varphi \rr=\sum_{n=0}^N\frac{1}{k^n}
\ll \Theta_k(m),{\rm j}_n(-i\partial)\varphi \rr + Rest^N_k
\end{equation}
where
$$
Rest^N_k=\ll \Theta_k(m),R^N_k \rr=k^r\sum_{\|\lambda\|/k\leq R}m(\lambda,k)\ll \beta_{\lambda/k},R_k^N\rr.
$$
The term $|Rest^N_k|$ is bounded by $\frac{C_N}{k^{N+1}}\sum_{\|\lambda\|/k\leq R}|m(\lambda,k)| \vol(G\lambda)$. 
Since $m(\lambda,k)$ is a piecewise quasi-polynomial function, we see that, for any $p\geq 1$, there exists $N\geq 1$ such that
\begin{equation}\label{eq=rest-N-k}
Rest^N_k=O(k^{-p}).
\end{equation}
Identities (\ref{eq=B-j-theta-k}) and (\ref{eq=rest-N-k}) show that the family
$B_{\rm j}^k\star\Theta_k(m)$ admits the asymptotic expansion ${\rm j}(i\partial/k){\rm AS}_k(m)$.
\end{proof}

\section{Spin quantization}

Let $M$  be an even dimensional oriented spin manifold of dimension $2d$ and provided with an action of $G$.
 If $(\CL,\nabla)$ is a $G$ equivariant line bundle equipped with a $G$ invariant Hermitian connection $\nabla$, we obtain a moment map $\phi_G:M\to \g^*$ and a closed two form $\Omega$ on $M$ using Kostant formula:
\begin{equation}\label{eq:kostant-rel}
\nabla^2=-i\Omega\quad \mathrm{and} \quad L(X)-\nabla_X=i\langle\phi_G,X\rangle.
\end{equation}
Here $X\in \g$ and  $L(X)$ is the infinitesimal action of $X\in \g$ on smooth sections of $\CL$.

Assume $M$  compact. The  spin quantization  $\QS_G(M,\CL)$ is a virtual finite dimensional representation of $G$, constructed as the index
of the Dirac operator on $M$ twisted by $\CL$. If $k$ is an integer, we denote by $\CL^k$ the $k$-th power of the line bundle $\CL$: we can
consider the equivariant index
 $$\QS_{G}(M,\CL^k)=\sum_{\lambda \in A_G} m_G(\lambda,k)V_{\lambda}.$$

Taking traces, we also write, for $g\in G$,
 $$\QS_{G}(M,\CL^k)(g)=\sum_{\lambda \in A_G} m_G(\lambda,k)\chi_{\lambda}(g).$$

We will need the following basic fact.

\begin{lemma}\label{eq:majoration-QS-L-k}
There exists a polynomial $P(k)$ such that $|\QS_{G}(M,\CL^k)(g)|\leq P(k)$ for all $g\in G$ and $k\geq 1$.
\end{lemma}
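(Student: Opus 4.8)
The claim is a uniform polynomial bound $|\QS_G(M,\CL^k)(g)| \leq P(k)$ for all $g \in G$, $k \geq 1$. The natural plan is to estimate the virtual character directly via the Atiyah--Bott--Berline--Vergne fixed point formula, which expresses $\QS_G(M,\CL^k)(g)$ for $g = \exp X$ (with $X$ in a Cartan subalgebra near the identity component of the centralizer of $g$) as a sum over connected components $F$ of the fixed point set $M^g$ of integrals of the form
\begin{equation*}
\sum_{F \subset M^g} \int_F \frac{\Ahat(F)\, \mathrm{ch}_g(\CL^k|_F, \nabla)}{D_g(\Ncal_F)},
\end{equation*}
where $D_g(\Ncal_F)$ is the equivariant characteristic class of the normal bundle (invertible on $F$ because $g$ has no nonzero fixed vectors there), and $\mathrm{ch}_g(\CL^k|_F)$ is the equivariant Chern character of the $k$-th power of $\CL$ restricted to $F$.

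**Key steps.** First I would invoke compactness of $M$: the fixed point sets $M^g$, as $g$ ranges over $G$, fall into finitely many diffeomorphism types, and there is a uniform bound on the number of components and on their topology. Second, and this is where the $k$-dependence enters: on each component $F$, $\mathrm{ch}_g(\CL^k|_F, \nabla) = g^{k}|_F \cdot e^{-ik\Omega|_F}$ where $g|_F$ acts on the fiber by a locally constant unit scalar; expanding $e^{-ik\Omega|_F}$ as a polynomial in $k$ (it terminates at degree $\dim F/2$ since $\Omega|_F$ is a $2$-form on a compact manifold), the integral over $F$ becomes a polynomial in $k$ whose coefficients are integrals of fixed differential forms built from $\Ahat(F)$, $D_g(\Ncal_F)^{-1}$, and powers of $\Omega|_F$. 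The degree of this polynomial is at most $d = \dim M/2$. Third, I would bound the coefficients uniformly in $g$: the forms involved depend continuously on $g$ through the connection data on the (finitely many) possible fixed-point geometries, $|g^k|_F| = 1$, and the relevant quantities (curvatures, the normal-bundle denominators which stay bounded away from zero by continuity on a compact parameter space) are uniformly bounded. This yields $|\QS_G(M,\CL^k)(\exp X)| \leq P(k)$ for a fixed polynomial $P$, first for $g$ in a neighborhood of the identity, then — since $\QS_G(M,\CL^k)$ is a genuine character of a finite-dimensional virtual representation — everywhere on $G$ by the same localization argument applied around an arbitrary $g_0$ (replacing $G$ by the centralizer $G^{g_0}$ and $M$ by $M$ with the $G^{g_0}$-action).

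**Main obstacle.** The delicate point is the uniformity in $g$ of the denominators $D_g(\Ncal_F)^{-1}$ and of the ``locally constant'' structure of the fixed point data: as $g$ varies, the fixed point set $M^g$ jumps, and one must organize the argument so that only finitely many ``strata'' of fixed-point behavior occur and on each the relevant integrals are controlled uniformly. One clean way around this is to avoid pointwise fixed-point formulas and instead argue more softly: $\QS_G(M, \CL^k)$ is, by the Atiyah--Singer index theorem applied to the elliptic operator $D_{\CL^k}$, a virtual representation whose total dimension $\dim \Ker(D_{\CL^k}) + \dim \mathrm{Coker}(D_{\CL^k})$ one may not control directly, but whose character satisfies $|\QS_G(M,\CL^k)(g)| \leq \sum_\lambda |m_G(\lambda,k)| \cdot \dim V_\lambda$ restricted to those $\lambda$ actually occurring. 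A more robust route, and the one I would ultimately take, is: (i) the alternating sum $\QS_G(M,\CL^k)(1) = \Qcal^{\mathrm{geo}}(M,\CL^k)$ is visibly a polynomial in $k$ of degree $\leq d$ by the Atiyah--Singer formula $\frac{1}{(-2i\pi)^d}\int_M e^{-ik\Omega}\Ahat(M)$; (ii) the same holds with $g = \exp X$ inserted via Kirillov/delocalized formula giving $\QS_G(M,\CL^k)(\exp X/k)\, j_\g^{1/2}(X/k)$ as an expression polynomial in $k$ with coefficients that are functions of $X$ bounded on compacta; (iii) evaluate at $X$ and use that $j_\g^{1/2}$ is bounded below near $0$ and the $G$-invariance of the character to propagate the bound over all of $G$. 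The real content is just assembling these ingredients so that the polynomial $P(k)$ — of degree $d$ — is manifestly independent of $g$; I expect the estimate on the Chern character factor $e^{-ik\Omega}$, whose expansion gives exactly the $k^d$ growth, to be the quantitatively governing step, while the $g$-uniformity is handled by compactness of $G$ together with the finiteness of fixed-point types.
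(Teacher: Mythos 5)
The paper states this lemma without proof, as a ``basic fact,'' so there is no official argument to compare against; I'll assess your proposal on its own terms. Your second route --- the delocalized index formula around an arbitrary $g_0$, plus compactness --- is the right one and your sketch is essentially sound, but it needs tightening in exactly the places where the work is. First, the rescaled expression $\QS_G(M,\CL^k)(\exp(X/k))$ that you invoke is \emph{not} a polynomial in $k$: by Lemma~\ref{crucial} it is a full Laurent series $k^d\sum_{n\ge 0}k^{-n}\Ical_n(X)$ in $1/k$, because $\Ahat(M)(X)$ contributes infinitely many equivariant degrees. What is a genuine polynomial of degree $\le d$ in $k$ is the \emph{unscaled} value $\QS_G(M,\CL^k)(\exp X)=\frac{1}{(-2i\pi)^d}\int_M e^{-ik\Omega(X)}\Ahat(M)(X)$, because $e^{-ik\Omega(X)}=e^{ik\langle\phi_G,X\rangle}e^{-ik\Omega}$ and $e^{-ik\Omega}$ terminates at degree $d$, while $e^{ik\langle\phi_G,X\rangle}$ has modulus one. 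Second, ``use $G$-invariance to propagate over $G$'' is not enough as stated: conjugation-invariance only reduces you to the maximal torus $T$, and you then need to cover the compact set $T$ by finitely many slices $\{g_0\exp X: \|X\|<r_{g_0},\ X\in\t\}$ on which (\ref{eq:QS-g-exp-X}) applies, take on each the degree-$\le d$ polynomial bound coming from $(g_0^{\CL})^k$ (modulus one) and the terminating $e^{-ik\Omega_{g_0}}$, and take the maximum of the finitely many resulting polynomials. Once you phrase it that way the finiteness of fixed-point strata that you worried about is automatic --- you never let the denominator $D_{g}$ degenerate because you always evaluate it at the fixed $g_0$ around which you expand.

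Finally, there is a cleaner route that avoids equivariant localization entirely and might be closer to what the authors call ``basic'': since $\QS_G(M,\CL^k)(g)=\Tr_{\Ker D^+_{\CL^k}}(g)-\Tr_{\Ker D^-_{\CL^k}}(g)$ and both kernels are honest unitary $G$-modules, one has $|\QS_G(M,\CL^k)(g)|\le \dim\Ker D^+_{\CL^k}+\dim\Ker D^-_{\CL^k}$ for every $g$, with no $g$-dependence to track. One is then reduced to the purely non-equivariant bound $\dim\Ker D_{\CL^k}=O(k^d)$, which follows from $\dim\Ker D_{\CL^k}\le\Tr\big(e^{-D_{\CL^k}^2/k}\big)$ together with the Lichnerowicz formula (the curvature of $\CL^k$ is $O(k)$, so at time $t=1/k$ the heat trace is governed by the Weyl term $(4\pi/k)^{-d}\vol(M)=O(k^d)$). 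That version makes the degree $d$ of $P$ and the uniformity in $g$ both manifest in one line.
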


We associate to $(M,\Lcal)$  the distributions on $\g^*$ given by
\begin{equation}\label{eq:def-psi-k}
\Psi^{(M,\CL)}_k :=  \Fcal_\g \circ \res_\g(k) \Big(\QS_{G}(M,\CL^k)(e^{X})\Big)
\end{equation}
and
\begin{eqnarray}\label{eq:def-phi-k}
\Theta^{(M,\CL)}_k &:=&\Fcal_\g\circ \res_\g(k)\Big(\QS_{G}(M,\CL^k)(e^{X})j_\g^{1/2}(X)\Big)\nonumber\\
&=&\res_{\g^*}(k)\Big(\sum_{\lambda\in A_G} m_G(\lambda,k)\beta_\lambda\Big)\nonumber\\
&=& k^r\sum_{\lambda\in A_G} m_G(\lambda,k)\beta_{\lambda/k},
\end{eqnarray}
where $r=\dim (G/T)/2$. Thus $\Theta^{(M,\Lcal)}_k$ is the  re-scaled  geometric analogue of
$\QS_G(M,\CL^k)=\sum_{\lambda\in A_G} m_G(\lambda,k)\chi_{\lambda}$.

\subsection{Quasi-polynomial behavior of the multiplicities}\label{sec:quasi-polynomial-multiplicity}
We consider the multiplicity function $m_G: A_G\times\mathbb{Z}_{>0}\to \Z$
defined by the relation $\QS_{G}(M,\CL^k)=\sum_{\lambda \in A_G} m_G(\lambda,k)\chi_{\lambda}$. We extend $m_G$ to 
$\tilde{\Lambda}\times \Z_{>0}$ by defining $m_G(\lambda,k)=0$ if $\lambda\notin A_G$.

The main objective of this section is the following result.

\begin{theo}\label{theo:m-G-piecewise}
Suppose that the $G$-action on $M$ admits {\em abelian infinitesimal stabilizers}. Then $m_G\in \Scal(\tilde{\Lambda})$.
\end{theo}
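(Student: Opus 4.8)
The plan is to reduce the statement to the $[Q,R]=0$ theorem in the spin context together with the known structure of the reduced-space quantizations as $\lambda$ and $k$ vary. First I would invoke the fact, recalled in the introduction and proved in \cite{pep-vergne:acta}, that when $M$ is compact one has the equality $m_G(\lambda,k)=m^{\rm geo}_G(\lambda,k)=\Qcal^{\rm geo}(M_{\lambda,G},\CL_{\lambda,k})$ for all $\lambda\in A_G$ and all $k\ge 1$; when $\lambda$ is a singular value of $\phi_G$ the right-hand side is understood via the standard shift/deformation procedure. Thus it suffices to show that the function $(\lambda,k)\mapsto \Qcal^{\rm geo}(M_{\lambda,G},\CL_{\lambda,k})$, extended by $0$ outside $A_G$, lies in $\Scal(\tilde\Lambda)$.

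The second step is to stratify $\t^*_{>0}$ by the walls of the moment polytope $\phi_G(M)\cap\t^*_{\ge 0}$ and its inverse images under the chamber structure coming from the local normal form of $\phi_G$; this is where the hypothesis of abelian infinitesimal stabilizers enters, since it guarantees that near each point the reduction $\phi_G^{-1}(\xi)/T$ behaves like a torus reduction and the relevant wall-crossing terms are controlled. On each closed rational polyhedral piece $P$ I would fix a topological model for the reduced space $M_{\lambda,G}$ and for the line bundle $\CL_{\lambda,k}$: the diffeomorphism type of $\phi_G^{-1}(\xi)/T$ is locally constant in $\xi/k\in P$, and the Chern class of $\CL_{\lambda,k}$ depends affinely on $(\lambda,k)$ through the linear variation of the symplectic/prequantum data under shifting the value of the moment map. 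Then the Atiyah–Singer formula $\Qcal^{\rm geo}(M_{\lambda,G},\CL_{\lambda,k})=\frac{1}{(-2i\pi)^{d_P}}\int_{M_{\lambda,G}} e^{-i\Omega_{\lambda,k}}\hat A(M_{\lambda,G})$ is a polynomial in the components of the Chern class, hence a polynomial in $(\lambda,k)$ on the interior of $P$; the periodic (quasi-polynomial rather than polynomial) corrections come precisely from the fact that the integral lattice in $H^2$ realizing $\CL_{\lambda,k}$ is indexed modulo a fixed sublattice, which forces a dependence of $m_G$ on $(\lambda,k)$ modulo some integer $D$. Assembling these local quasi-polynomials with the characteristic functions $[C_P]$ exactly produces a representation $m_G=\sum_{P\in\Acal}m_P[C_P]$ of the required form, and the local-finiteness condition on $\Acal$ follows from compactness of $M$ (the moment image is compact, so only finitely many walls meet any compact set).

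The main obstacle I anticipate is handling the singular values of $\phi_G$ cleanly: at those $\lambda$ the reduced space is not smooth, the definition of $m^{\rm geo}_G(\lambda,k)$ requires the deformation/shift desingularization, and one must check that the resulting numbers still fit into a single quasi-polynomial on the closed polyhedron $P$ rather than only on its relative interior, i.e. that there is no jump as one approaches a wall from inside a chamber. I would address this by using the wall-crossing formula for $[Q,R]=0$ multiplicities (the difference across a wall is itself the quantization of a lower-dimensional reduced space, hence again piecewise quasi-polynomial by induction on $\dim M$), together with the abelian-stabilizer hypothesis to ensure the transverse models are linear and the induction is well-founded. An alternative, and perhaps cleaner, route is to deduce the statement directly from the transversally elliptic index: write $\QS_G(M,\CL^k)$ via the Atiyah–Singer fixed-point/delocalized formula, decompose the fixed-point contributions according to the infinitesimal stabilizer stratification, and observe that each contribution, after extracting the multiplicity of $\chi_\lambda$ by the Weyl integration formula, is manifestly a quasi-polynomial in $(\lambda,k)$ supported on a polyhedral cone determined by the corresponding stratum — this is essentially the Meinrenken–Sjamaar/Paradan localization argument, and it avoids any explicit desingularization of singular reduced spaces. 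Either way, the heart of the matter is the piecewise-quasi-polynomial control of wall-crossing, and the abelian-stabilizer hypothesis is exactly what makes the transverse picture simple enough to carry this through.
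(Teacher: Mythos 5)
Your overall plan --- invoke $[Q,R]=0$ to replace $m_G(\lambda,k)$ by the geometric quantization of reduced spaces, and then show that this function is piecewise quasi-polynomial in $(\lambda,k)$ --- is indeed the spirit of the paper's argument, but the paper's concrete mechanism differs in ways that sidestep exactly the difficulties you flag. First, the paper does not work with the nonabelian reductions $M_{\lambda,G}=\phi_G^{-1}(\lambda)/T$ directly. It uses the finer \emph{abelianized} multiplicity formula from \cite{pep-vergne:acta}: one restricts to the open submanifold $Y=\phi_G^{-1}(\tgot^*_{>0})\subset M$, which inherits a $\tilde T$-equivariant spin structure, decomposes $Y$ into connected components $Y_j$, and for each $j$ one gets a multiplicity function $m_j$ via the index of a spinc Dirac operator on the $T$-reduced \emph{orbifold} $\Ycal_{j,\xi}=(\phi_T^{-1}(\xi)\cap Y_j)/T$ twisted by $\Lcal^k\vert_{Y_j}\otimes\C_{-\lambda}$; then $m_G=\sum_j m_j$ (Theorem \ref{theo:QR-multiplicities}). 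The abelian-stabilizer hypothesis is precisely what makes this $T$-reduction the right object and the chamber picture in the affine space $I_j$ tractable; it is not merely a technical simplification of the nonabelian reduction as your sketch treats it. Second, the quasi-polynomiality on each polyhedron is delivered by the \emph{Kawasaki orbifold index theorem} applied to $\Ycal_{j,\xi}$, not by the smooth Atiyah--Singer formula; the periodic corrections come from the twisted-sector contributions of the finite isotropy groups (roots of unity raised to powers linear in $(\lambda,k)$), not from ``the integral lattice in $H^2$ indexed modulo a fixed sublattice'' as you suggest. Third, the point you identify as the main obstacle --- that the quasi-polynomials $m_{j,\cgot}$ agree on the closures of adjacent chambers, so that no wall-crossing jump occurs --- is not re-derived by induction on $\dim M$; it is imported directly from \cite{pep-vergne:acta} (the second bullet of the theorem quoted just before Definition 3.11). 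With these three inputs the remaining step is purely combinatorial inclusion--exclusion over the faces of the chambers. So your proof shape is right, but each of the decisive technical steps you leave as ``to be handled'' is in fact handled in the paper by a specific citation to \cite{pep-vergne:acta} and \cite{Kawasaki81}, and your alternative back-up route via the delocalized fixed-point formula (closer to \cite{vergne:Izvestiya}) is not what the paper does here.
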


In particular, Theorem \ref{theo:m-G-piecewise} implies that the function $k\mapsto m_G(k\lambda,k)$ is a quasi-polynomial function of $k\geq 1$.

Theorem \ref{theo:m-G-piecewise} is a consequence of the geometric formulas for the multiplicities $m_G(\lambda,k)$ obtained in 
\cite{pep-vergne:acta}. Let us first recall these geometric formulas.

Let $\tilde{T}\to T$ be the covering such that $\tilde{\Lambda}$ is the weight lattice of the torus $\tilde{T}$. 
Every element $\lambda\in \tilde{\Lambda}$ determines a character of $\tilde{T}$, and we denote by $\C_\lambda$ 
the corresponding $1$-dimensional representation.

Recall that we have chosen  a  $G$-invariant Hermitian connection on $\CL$, and $\phi_G: M\to \g^*$
is the associated moment map. We consider the positive Weyl chamber $\t^*_{\geq 0}$ with interior $\t^*_{> 0}$ 
and the following subset
$$
Y=\phi_G^{-1}(\t^*_{> 0}).
$$

Then $Y$ is a $T$-invariant sub-manifold of $M$, not necessarily connected, but
every connected component of $Y$ is even dimensional. We denote by $\phi_T: Y\to \t^*$ the restriction of the map $\phi_G$ to $Y$. Thus $\phi_T(Y)$ is contained in $\t^*_{>0}$.

\begin{lemma}
 The $K$-equivariant spin structure on $M$ induces a $\tilde{T}$-equivariant spin structure on $Y$. We denote by $\Scal_Y$  the corresponding spinor bundle.
\end{lemma}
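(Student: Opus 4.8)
The plan is to reduce the claimed statement about $Y$ to the existence of the $G$-equivariant spin structure on $M$ by analyzing what goes wrong (or doesn't) when we pass from $G$ to the maximal torus and restrict to the open subset $Y = \phi_G^{-1}(\t^*_{>0})$. The first observation is that a spin structure on $M$ restricts to a spin structure on any open submanifold, and $Y$ is an open $T$-invariant submanifold. So the only real content is the upgrade of the $T$-equivariant structure to a $\tilde T$-equivariant structure via the covering $\tilde T \to T$; since any $T$-action pulls back along $\tilde T \to T$, the $T$-equivariant spin structure on $M|_Y$ is automatically $\tilde T$-equivariant, and one then sets $\Scal_Y$ to be the associated spinor bundle.

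**Next I would** be careful about the parity claim, since the statement asserts (implicitly, by speaking of a spinor bundle and later of a Dirac operator) that $Y$ is even-dimensional. This follows because each connected component of $Y$ meets the open dense stratum where $\phi_G$ is submersive onto $\t^*_{>0}$, and the symplectic-type structure coming from the Kostant two-form $\Omega$ on that stratum forces even rank; more robustly, one uses that the generic stabilizer algebra is abelian (an assumption in force in this section, via Theorem~\ref{theo:m-G-piecewise}'s hypothesis) and that $\phi_T : Y \to \t^*_{>0}$ has image in the interior of the chamber, so the normal directions to the $G$-orbits inside $M$ pair up under $\Omega$. I would record this as the sentence preceding the lemma already does ("every connected component of $Y$ is even dimensional") and simply cite it.

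**The construction of $\Scal_Y$ concretely** goes as follows. Write $\Scal_M$ for the $G$-equivariant spinor bundle on $M$ (which exists because $M$ is $G$-equivariantly spin — this is the standing hypothesis). Restrict the principal $\Spin(2d)$-bundle and its $G$-equivariant lift to $Y$; pull the $G$-action back to a $\tilde T$-action through $\tilde T \to T \hookrightarrow G$. This gives a $\tilde T$-equivariant principal $\Spin(2d)$-bundle on $Y$ lifting the frame bundle of $TY$, i.e.\ a $\tilde T$-equivariant spin structure, and $\Scal_Y := \Scal_M|_Y$ with its induced $\tilde T$-action is the corresponding spinor bundle. No obstruction class appears because we are only \emph{restricting} an existing equivariant structure and \emph{pulling back} an action along a group homomorphism, both of which are manifestly functorial.

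**The main (mild) obstacle** is purely bookkeeping: making sure the $\tilde T$-equivariant structure one writes down is the one used later — in particular that the $\tilde T$-weights appearing in $\Scal_Y \otimes \C_{-\lambda} \otimes \CL^k|_Y$ are the right ones, so that the geometric multiplicity formulas of \cite{pep-vergne:acta} attach correctly. This is not a difficulty in proving the lemma as stated but in setting up compatible conventions; I would dispatch it by fixing, once and for all, that $\tilde T$ acts on $\Scal_Y$ by restriction of the $G$-action on $\Scal_M$ composed with $\tilde T \to T \subset G$, and noting that $\tilde\Lambda$ being the weight lattice of $\tilde T$ is exactly what makes all the weights that arise (half-sums of roots, characters $t^{-\lambda}$ for $\lambda \in \tilde\Lambda$) genuinely integral for $\tilde T$. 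With these conventions the lemma is immediate.
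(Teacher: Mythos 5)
Your proof rests on the claim that ``$Y$ is an open $T$-invariant submanifold'' of $M$, and that is false: $Y=\phi_G^{-1}(\t^*_{>0})$ is the preimage of an open subset of $\t^*$, not of $\g^*$, and $\t^*$ has codimension $\dim(\g/\t)$ in $\g^*$. The paper is careful to say only that $Y$ is a ``$T$-invariant sub-manifold''; its codimension in $M$ is $\dim(G/T)=2r$, and indeed $\dim Y = 2d-2r$ is exactly why each component is even dimensional. Once $Y$ is not open, the assertion that a spin structure restricts to it for free collapses, and with it the whole argument. Moreover, your reading of where $\tilde T$ enters (``pull the existing $T$-equivariant structure back along $\tilde T\to T$'') also misses the point: if $Y$ already carried a $T$-equivariant spin structure, pulling it back to $\tilde T$ would be content-free; the cover is needed to \emph{produce} the spin structure in the first place.

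The genuine content is the following. Because $\t^*_{>0}$ is a slice for the coadjoint action, the saturation $G\cdot Y$ is open in $M$ and $T$-equivariantly $TM|_Y \cong TY \oplus (\g/\t)$, where $\g/\t$ carries the isotropy $T$-action (the normal directions to $Y$ are the orbit directions). Restricting the $G$-equivariant spin structure on $M$ gives a $T$-equivariant spin structure on the bundle $TM|_Y = TY \oplus (\g/\t)$, not on $TY$ itself. The $T$-module $\g/\t$ does not in general admit a $T$-equivariant spin structure (this amounts to $\rho\in\Lambda$); it does, however, admit a $\tilde T$-equivariant one, since by construction $\rho\in\tilde\Lambda$. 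One then invokes the two-out-of-three principle for (equivariant) spin structures on the direct sum $TM|_Y \cong TY \oplus (\g/\t)$ to conclude that $TY$ inherits a $\tilde T$-equivariant spin structure, whose spinor bundle is $\Scal_Y$. So the proposal is not a valid proof of the lemma; the missing ideas are (i) the correct local model $G\cdot Y \cong G\times_T Y$ giving the normal bundle $\g/\t$, and (ii) the role of $\tilde T$ in spinning $\g/\t$, after which one divides rather than restricts.
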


Now we explain how we can attach a multiplicity function
$$
m_j: \tilde{\Lambda}\times\mathbb{Z}_{>0}\to \mathbb{Z}
$$
to a connected component $Y_j$ of $Y$. Let $\t_j\subset\t$ be the generic infinitesimal stabilizer of the $T$-action on $Y_j$.
We denote by $I_j\subset\t^*$ the affine rational subspace with direction $\t_j^\perp$ that contains $\phi_T(Y_j)$.

We consider a finite collection $\Bcal_j$ of affine co-dimension $1$ subspaces of $I_j$ defined as follows: 
$A\in\Bcal_j$ if $A=\phi_T(y)+\t_y^\perp$ for some
$y\in Y_j$ such that $\dim (\t_y\slash \t_j)=1$.

\begin{definition} A chamber   of $I_j$ is a connected
component of the open subset $(I_j)_{reg}:=(I_j\setminus \cup_{E\in \Bcal_j} E)\cap \t^*_{>0}$.
\end{definition}

Thus, the closure of a chamber is a rational polyhedron contained in $\t^*_{\geq 0}$.

Let $\cgot$ be a chamber of $I_j$ and let  $\xi\in\cgot$ be  a regular value of the map $\phi_T:Y_j\to I_j$. We consider the orbifold
$$
\Ycal_{j,\xi}:=(\phi_T^{-1}(\xi)\cap Y_j)/T.
$$

\begin{lemma}Let $(\lambda,k)\in \tilde{\Lambda}\times\mathbb{Z}_{>0}$ such that $\frac{\lambda}{k}\in I_j$.
The $\tilde{T}$-equivariant spinc bundle $\mathcal{S}_{Y_j}\otimes \CL^k\vert_{Y_j}\otimes \C_{-\lambda}$ on $Y_j$ induces a spinc bundle
$\Scal_{j,\xi}^{\lambda, k}$ on $\Ycal_{j,\xi}$. We denote by $\Qcal(\Ycal_{j,\xi}, \Scal_{j,\xi}^{\lambda, k})$ the index of the
corresponding Dirac operator.
\end{lemma}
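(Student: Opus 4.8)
The statement to be proved is the construction of the spin$^c$ bundle $\Scal_{j,\xi}^{\lambda,k}$ on the orbifold $\Ycal_{j,\xi}=(\phi_T^{-1}(\xi)\cap Y_j)/T$ from the $\tilde T$-equivariant spin$^c$ bundle $\Scal_{Y_j}\otimes \CL^k|_{Y_j}\otimes \C_{-\lambda}$ on $Y_j$, under the hypothesis $\lambda/k\in I_j$. This is a standard Kirwan-type reduction-of-structure argument, and the plan is to verify it in three steps.

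First I would check that the data on $Z:=\phi_T^{-1}(\xi)\cap Y_j$ is $T$-basic, i.e. descends to the quotient orbifold. The key point is that the $\tilde T$-equivariant structure of the bundle $\Scal_{Y_j}\otimes \CL^k|_{Y_j}\otimes \C_{-\lambda}$ is such that the generic infinitesimal stabilizer $\t_j$ of the $T$-action on $Y_j$ acts \emph{trivially} on the total space. Indeed, on a fiber the determinant line of $\Scal_{Y_j}$ carries the weight equal to (twice) the moment map value up to the $\CL$-curvature contribution; more precisely, the Kostant relation \eqref{eq:kostant-rel} shows that the infinitesimal $\t$-action on $\CL^k|_{Y_j}$ has weight $k\phi_T$, while $\C_{-\lambda}$ contributes $-\lambda$, and the spinor bundle $\Scal_{Y_j}$ contributes the moment-map-type weight coming from the (pseudo-)symplectic normal data; along $\phi_T^{-1}(\xi)$ these combine to a weight that, restricted to $\t_j$, is proportional to $k\xi-\lambda$ evaluated on $\t_j$. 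Since $\xi\in I_j$ and $I_j$ has direction $\t_j^\perp$, and $\lambda/k\in I_j$ as well, the difference $\xi-\lambda/k$ lies in $\t_j^\perp$, so $\t_j$ acts trivially. Hence the residual action of the (finite) quotient groups on the fibers over $Z$ is what produces an \emph{orbifold} spin$^c$ bundle $\Scal_{j,\xi}^{\lambda,k}$ on $\Ycal_{j,\xi}$.

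Second, I would identify the underlying orbifold spin$^c$ \emph{structure}: one has the standard isomorphism of the restricted tangent data
\[
T Y_j|_Z \cong \pi^*T\Ycal_{j,\xi} \oplus (\t/\t_j) \oplus (\t/\t_j)^*
\]
(coming from $\xi$ being a regular value of $\phi_T:Y_j\to I_j$, so that $\t/\t_j$ acts locally freely on $Z$), the last two summands being canonically complex-conjugate and hence carrying a canonical spin structure. Tensoring the Clifford module $\Scal_{Y_j}\otimes\CL^k\otimes\C_{-\lambda}$ by the inverse of this canonical spinor contribution and taking $T$-invariants yields a $\Clif(T\Ycal_{j,\xi})$-module on $\Ycal_{j,\xi}$, which by definition is the spinor bundle of a spin$^c$ structure; this is $\Scal_{j,\xi}^{\lambda,k}$. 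Finally, the associated Dirac operator and its index $\Qcal(\Ycal_{j,\xi},\Scal_{j,\xi}^{\lambda,k})$ are defined in the usual orbifold sense (Kawasaki index theorem), which requires only that $\Ycal_{j,\xi}$ be a compact spin$^c$ orbifold — compactness following from properness of $\phi_G$, hence of $\phi_T$ on $Y_j$.

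The main obstacle, and the only place where the hypothesis $\lambda/k\in I_j$ is genuinely used, is the first step: verifying that the generic stabilizer $\t_j$ acts trivially on the fibers of $\Scal_{Y_j}\otimes\CL^k|_{Y_j}\otimes\C_{-\lambda}$ over $\phi_T^{-1}(\xi)$. This is a bookkeeping computation with the $\tilde T$-weights of the spinor bundle built from the $\phi_G$-positive polarization, and it is exactly here that the careful normalization of $\Scal_{Y_j}$ (via the induced spin structure of the previous lemma and the Kostant relation) must be invoked; everything else is the routine passage to orbifold quotients and is already in \cite{pep-vergne:acta}.
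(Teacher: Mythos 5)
The paper itself gives no proof of this lemma: it is stated as part of recalling the setup of \cite{pep-vergne:acta}, and only the \emph{subsequent} theorem is attributed there. So there is no "paper's own proof" to compare against, and I can only assess your reconstruction on its own merits.

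Your overall plan is the right one and correctly identifies the crux. The three steps (check that the data is $T$-basic over $Z=\phi_T^{-1}(\xi)\cap Y_j$, use the Kirwan splitting $TY_j|_Z\cong\pi^*T\Ycal_{j,\xi}\oplus(\t/\t_j)\oplus(\t/\t_j)^*$ to reduce the Clifford module, then invoke the Kawasaki index for the compact spin$^c$ orbifold) are exactly the standard reduction argument, and you are right that the hypothesis $\lambda/k\in I_j$ enters only in the first step.

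That said, the first step as you wrote it is imprecise in two places, and these are precisely the places where a referee would push back. First, the statement ``the Kostant relation shows that the infinitesimal $\t$-action on $\CL^k|_{Y_j}$ has weight $k\phi_T$'' is only correct for $X\in\t_j$: the relation $L(X)=\nabla_X+i\langle\phi_G,X\rangle$ has a covariant-derivative term which vanishes exactly because the vector field generated by $X\in\t_j$ is identically zero on $Y_j$ (being the generic infinitesimal stabilizer). You should say this explicitly, since it is what turns the Kostant relation into a genuine weight computation. Second, and more importantly, the phrase ``the spinor bundle $\Scal_{Y_j}$ contributes the moment-map-type weight coming from the (pseudo-)symplectic normal data'' is the wrong explanation and obscures the actual (cleaner) fact: since $\t_j$ acts trivially on $Y_j$ it also acts trivially on $TY_j$, hence $\tilde T_j$ acts on the $\Clif(TY_j)$-module $\Scal_{Y_j}$ by a scalar character, and because $\Scal_{Y_j}$ is the spinor bundle of a genuine \emph{spin} (not merely spin$^c$) structure induced from $M$, that character is trivial. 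In other words the $\t_j$-weight of $\Scal_{Y_j}$ is $0$, full stop — there is no ``moment-map-type'' contribution to explain away. With that correction, the $\t_j$-weight over $Z$ is exactly $(k\xi-\lambda)|_{\t_j}$, and this vanishes if and only if $\lambda/k-\xi\in\t_j^\perp$, i.e. $\lambda/k\in I_j$. The rest of your argument (quotient is an orbifold because the $T/T_j$-action on $Z$ is only locally free; the two copies of $\t/\t_j$ carry a canonical complex/spin structure; compactness of $\Ycal_{j,\xi}$) is fine.

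So: correct approach, correct identification of where the hypothesis is used, but the key weight computation is stated loosely enough that as written it does not actually establish triviality of the $\t_j$-action on the fibers. Tightening the two points above would make it a complete proof.
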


Let $E_j\subset \tgot^*\times \R$ be the subspace generated by $(v,1), v\in I_j$. We denote by 
$\tilde{\Lambda}_j:=(\tilde{\Lambda}\times \Z) \cap E_j$ the corresponding lattice of $E_j$. 
The Kawasaki index theorem \cite{Kawasaki81} tells us that the map
$$
(\lambda,k)\longmapsto \Qcal(\Ycal_{j,\xi}, \Scal_{j,\xi}^{\lambda, k})
$$
is a quasi-polynomial  function ${\rm m}_{j,\xi}$ on $\tilde{\Lambda}_j$. 

The following result is proved in \cite{pep-vergne:acta}.

\begin{theorem}
$\bullet$ The quantity $\Qcal(\Ycal_{j,\xi}, \Scal_{j,\xi}^{\lambda, k})$ does not depend on
the choice of $\xi\in \cgot$. We denote by ${\rm m}_{j,\cgot}:\tilde{\Lambda}_j\to \C$ the corresponding quasi-polynomial.

$\bullet$ Let $(\lambda,k)\in \tilde{\Lambda}\times\mathbb{Z}_{>0}$ and let $\cgot_1,\cgot_2$ be two chambers such that 
$\frac{\lambda}{k}\in \overline{\cgot_1}\cap\overline{\cgot_2}$. Then
$$
{\rm m}_{j,\cgot_1}(\lambda, k)={\rm m}_{j,\cgot_2}(\lambda, k).
$$
\end{theorem}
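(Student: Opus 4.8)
The statement has two parts: (1) independence of $\Qcal(\Ycal_{j,\xi},\Scal_{j,\xi}^{\lambda,k})$ on the choice of regular value $\xi$ within a fixed chamber $\cgot$, and (2) the matching of the two quasi-polynomials ${\rm m}_{j,\cgot_1},{\rm m}_{j,\cgot_2}$ on the common face $\overline{\cgot_1}\cap\overline{\cgot_2}$ of adjacent chambers. Since the theorem is explicitly attributed to \cite{pep-vergne:acta}, the plan is essentially to carry out the wall-crossing analysis of the symplectic/spinc reduction.

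\textbf{Part (1): independence within a chamber.} The plan is to fix two regular values $\xi_0,\xi_1\in\cgot$ and join them by a path in $\cgot$. Since $\cgot$ is connected and open, and the walls in $\Bcal_j$ are exactly the loci where $\phi_T:Y_j\to I_j$ fails to have locally constant reduced topology, a path inside $\cgot$ avoids all critical images of $\phi_T$ that would change the reduction. First I would use the fact that $\phi_T^{-1}([\xi_0,\xi_1])\cap Y_j$ is a cobordism (with corners, as an orbifold) between $\Ycal_{j,\xi_0}$ and $\Ycal_{j,\xi_1}$, carrying a spinc structure restricting the given one, and whose boundary-defining function is the pairing of $\phi_T$ with a covector normal to the segment. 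Then the relevant index is constant because the Dirac operators on the two ends are cobordant: one applies the gluing/excision principle for spinc Dirac operators (or equivalently, the Atiyah--Patodi--Singer-type argument that the index of the family is locally constant away from critical values of $\phi_T$). The key point making this work is that staying inside a chamber of $I_j$ means we never cross a wall of $\Bcal_j$, so no component of $\phi_T^{-1}(\xi)\cap Y_j$ degenerates.

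\textbf{Part (2): wall-crossing matches the quasi-polynomials.} Here the plan is to take $\cgot_1,\cgot_2$ adjacent across a wall $E\in\Bcal_j$ and to show that the index jump across $E$, computed for $\lambda/k\in\overline{\cgot_1}\cap\overline{\cgot_2}\subset E$, vanishes. First I would set up the standard picture: near $E$, by the equivariant symplectic (or spinc) slice/normal form, the model for the crossing is a linear action of the circle $\t_y/\t_j$ (for $y$ with $\dim(\t_y/\t_j)=1$) on a fibered neighborhood, so the wall-crossing term is an index over a lower-dimensional reduced space twisted by a sum over the relevant weights, in the spirit of the Guillemin--Sternberg / Meinrenken--Sjamaar wall-crossing formula. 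Then, crucially, one must evaluate this jump precisely \emph{at} the wall $\lambda/k\in E$ rather than strictly on one side; at that boundary value the contribution is a sum of index integrals over the reduced space at the critical level, and the shift datum $\C_{-\lambda}$ with $\lambda/k\in E$ makes these contributions cancel in pairs (weights $\pm$), using that every component of $Y_j$ is even-dimensional and the spinc structure is compatible. This is where the normalization of $\rho$, the choice of $\tilde\Lambda$, and the ``admissible'' shift enter: they guarantee the weights occurring are symmetric about the wall.

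\textbf{Main obstacle.} The hard part will be Part (2): controlling the wall-crossing contribution when $\lambda/k$ lies exactly on the wall $E$, where $\phi_T^{-1}(\lambda/k)\cap Y_j$ is singular and neither chamber's reduction is naively defined. One needs a careful deformation/perturbation argument (shift $\xi$ slightly off the wall on either side and compare) together with the precise combinatorics of which weights of the normal circle action appear, to see that the two one-sided quasi-polynomials ${\rm m}_{j,\cgot_1}$ and ${\rm m}_{j,\cgot_2}$ — each a priori different on the common face — actually agree there. The even-dimensionality of the components of $Y_j$ and the structure of the induced spinc bundle $\Scal_{j,\xi}^{\lambda,k}$ are exactly the inputs that force this cancellation, so I would organize the argument around making that symmetry explicit. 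All of this is carried out in \cite{pep-vergne:acta}, and the role of the present section is only to quote it.
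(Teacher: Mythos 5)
The paper offers no proof of this theorem: immediately before the statement it says ``The following result is proved in \cite{pep-vergne:acta},'' and the theorem is simply quoted. You note this yourself at the end of your sketch, so there is no in-paper argument to compare against, and your write-up is correctly framed as a blueprint for what the cited reference does rather than as a self-contained proof.

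As a blueprint it is reasonable, and you correctly locate the genuine difficulty in the second bullet (agreement of the two one-sided quasi-polynomial extensions on the shared face), which is the heart of the matter. Two remarks on alignment with \cite{pep-vergne:acta}. For the first bullet, the cobordism picture is the right intuition, but since $\Ycal_{j,\xi}$ is an orbifold the statement that ${\rm m}_{j,\cgot}$ is a well-defined quasi-polynomial really rests on the rigidity of the Kawasaki index as $\xi$ varies through regular values inside a chamber, rather than on an explicit spin cobordism with corners. For the second bullet, the cited work does not organize the argument as a Guillemin--Sternberg/Meinrenken--Sjamaar wall-crossing computation with an ad hoc cancellation of $\pm$-paired weights; the mechanism is the Witten-type nonabelian localization for the Dirac symbol deformed by the Kirwan vector field, and the vanishing of the jump on the wall is a manifestation of the spin $[Q,R]=0$ theorem together with the parity/admissibility constraints built into $\tilde\Lambda$ and the $\rho$-shift. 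Your sketch gestures in the right direction but does not pin down that mechanism; since the paper itself merely cites the result, this is not a defect of your reading of the paper, only a reminder that the crucial cancellation would need a sharper argument than ``weights occur in $\pm$ pairs'' if you were to write out a full proof.
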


The first point of the previous theorem tell us that ${\rm m}_{j,\cgot}$ is the zero map if the chamber $\cgot$ is not
contained in the image of the map $\Phi_T:Y_j\to I_j$. It is due to the fact that we can choose a
regular value $\xi\in\cgot$ with empty reduced space  $\Ycal_{j,\xi}$.

Thus we can define multiplicity functions $(m_j)_{j\in J}$ parameterized by the set $J$ of connected components of the manifold $Y$ as follows.

\begin{definition}The map $m_j: \tilde{\Lambda}\times\mathbb{Z}_{>0}\to \mathbb{Z}$ is defined as follows
$$
m_j(\lambda,k)=
\begin{cases}
   0\qquad\hspace{22mm} {\rm if}\ \frac{\lambda}{k}\notin I_j,\\
   {\rm m}_{j,\cgot}(\lambda, k)
   \qquad \hspace{7mm}{\rm if}\ \frac{\lambda}{k}\in \overline{\cgot}\subset I_j.
\end{cases}
$$
\end{definition}

We can now state the main result of \cite{pep-vergne:acta}.
\begin{theorem}\label{theo:QR-multiplicities}
The following relation
$$
m_G(\lambda,k)=\sum_{j\in J}m_j(\lambda,k)
$$
holds for any $(\lambda,k)\in \tilde{\Lambda}\times\mathbb{Z}_{>0}$.
\end{theorem}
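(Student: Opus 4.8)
\medskip

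My plan would be to derive this spin ``quantization commutes with reduction'' statement in three stages, following Paradan's deformation method: first reduce the extraction of $m_G(\lambda,k)$ to a weight multiplicity for the torus $\tilde{T}$; then compute that multiplicity by deforming the Dirac symbol along the Kirwan vector field of $\phi_T$ and localizing near the critical set of $\|\phi_T\|^2$; and finally identify the surviving local contributions with the reduced-orbifold indices $m_j$.

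First I would fix $\lambda\in A_G$ and multiply the Weyl character formula $\chi_\lambda\cdot\prod_{\alpha\in\Delta^+}(e^{\alpha/2}-e^{-\alpha/2})=\sum_{w\in W_G}\epsilon(w)e^{w\lambda}$ into the identity $\QS_G(M,\CL^k)=\sum_{\mu\in A_G}m_G(\mu,k)\chi_\mu$. Since $\lambda$ is regular, it is the unique dominant element of its $W_G$-orbit, so $m_G(\lambda,k)$ is the coefficient of the weight $\lambda$ in the $T$-character $\QS_G(M,\CL^k)\vert_T\cdot\prod_{\alpha\in\Delta^+}(e^{\alpha/2}-e^{-\alpha/2})$. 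The Weyl denominator is, up to an overall sign, the $\tilde{T}$-character of the $\Z/2$-graded spinor module $S_{\g/\t}$ of the $T$-representation $\g/\t$, so up to sign $m_G(\lambda,k)$ equals the $\tilde{T}$-weight-$\lambda$ multiplicity of the $\tilde{T}$-equivariant index of the Dirac operator of $M$ twisted by $\CL^k\otimes(M\times S_{\g/\t})$ --- a ``Dirac induction'' relating the $G$-index on $M$ to a $\tilde{T}$-index on $M$.

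Next I would note that the $\tilde{T}$-moment map carried by $\CL^k$ is $k\phi_T$, and deform the principal symbol of this twisted Dirac operator by the Kirwan vector field $\kappa_T$ of $\phi_T$ (using the invariant metric to identify $\t^*\cong\t$). For positive deformation parameter the symbol is $\tilde{T}$-transversally elliptic, its index is a well-defined generalized character independent of the parameter, and by excision it splits as a sum of local contributions indexed by the connected components of $\{\kappa_T=0\}=\mathrm{Cr}(\|\phi_T\|^2)$. A normal-form analysis near each component, together with a stationary-phase-type computation along the fibres of $\phi_T$, should show that for $\lambda$ dominant regular the only local indices carrying the weight $\lambda$ are those over $\phi_T^{-1}(\lambda/k)$, and that their sum is the index of the induced Dirac operator on the reduced orbifold $\phi_T^{-1}(\lambda/k)/T$ with a naturally induced spin$^c$ bundle. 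Since $\lambda/k\in\t^*_{>0}$ one has $\phi_T^{-1}(\lambda/k)\subset Y=\bigsqcup_{j\in J}Y_j$, so, writing $\xi=\lambda/k$, this orbifold is $\bigsqcup_{j}\Ycal_{j,\xi}$ and the induced bundle restricts to $\Scal_{j,\xi}^{\lambda,k}$ on the $j$-th piece; summing then gives $m_G(\lambda,k)=\sum_{j\in J}\Qcal(\Ycal_{j,\xi},\Scal_{j,\xi}^{\lambda,k})=\sum_{j\in J}m_j(\lambda,k)$ whenever $\xi$ is a regular value of each $\phi_T\vert_{Y_j}$. The remaining cases I would handle by continuity: the $m_{j,\cgot}$ are quasi-polynomial and agree on common walls, so both sides extend consistently to all $(\lambda,k)\in\tilde{\Lambda}\times\Z_{>0}$, and for $\lambda\notin A_G$ both vanish because the alternating sum $\sum_w\epsilon(w)e^{w\lambda}$ has no $e^\lambda$-term.

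The hard part will be the localization step: proving the decomposition of the index of the deformed transversally elliptic operator and carrying out the normal-form identification of each surviving local index with a reduced-orbifold index, in particular ruling out contributions of the ``spurious'' components of $\mathrm{Cr}(\|\phi_T\|^2)$ to a given dominant regular weight. The standing hypothesis that the $G$-action has abelian infinitesimal stabilizers is what makes the reduced-side constructions legitimate: it forces each generic stabilizer $\t_j$ to lie in $\t$, keeps the components $Y_j$ even-dimensional with controllable normal data, and makes the $\Ycal_{j,\xi}$ orbifolds of the expected dimension, so that the Kawasaki index theorem \cite{Kawasaki81} applies and produces the quasi-polynomials $m_{j,\cgot}$.
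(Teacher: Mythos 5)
There is nothing in this paper to compare your argument against: Theorem~\ref{theo:QR-multiplicities} is presented as a recall of ``the main result of \cite{pep-vergne:acta}'' and is used here as a black box (the surrounding text sets up the notation $Y_j$, $\Ycal_{j,\xi}$, ${\rm m}_{j,\cgot}$, $m_j$ from that companion paper, but gives no proof). So what you are really attempting is a reconstruction of the Acta Mathematica argument.

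Your first stage is sound: for $\lambda\in A_G$ regular the Weyl character formula extracts $m_G(\lambda,k)$ as the $\tilde T$-weight-$\lambda$ coefficient of $\QS_G(M,\CL^k)\vert_{\tilde T}\cdot\prod_{\alpha\in\Delta^+}(e^{\alpha/2}-e^{-\alpha/2})$, and the denominator is (up to sign and the $\rho$-shift already built into the $\tilde T$-cover) the supertrace of $S_{\g/\t}$, so this is the multiplicity of $\C_\lambda$ in a $\tilde T$-equivariant twisted Dirac index. The localization stage, however, contains a genuine conceptual gap. The excision decomposition of the Kirwan-deformed $\tilde T$-index is indexed by the connected components of $\mathrm{Cr}(\|\phi_T\|^2)$, a \emph{fixed} closed set independent of $\lambda$; for a regular value $\lambda/k$ the fiber $\phi_T^{-1}(\lambda/k)$ is disjoint from it. So the clause ``the only local indices carrying the weight $\lambda$ are those over $\phi_T^{-1}(\lambda/k)$'' does not parse as written, and a stationary-phase computation along the fibers of $\phi_T$ cannot supply the missing link. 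To recover the reduced-orbifold index one must either apply the Atiyah shift --- deform with the Kirwan vector field of $\phi_T-\lambda/k$ so that $\phi_T^{-1}(\lambda/k)$ becomes the minimum of the norm square, compute the weight-$0$ part of the local index there, and prove a vanishing theorem for the higher critical components --- or else compute the full generalized character $\QS_{\tilde T}(M,E,Z_\gamma)$ at each $\gamma$ and show its multiplicity function is piecewise quasi-polynomial, constant on chambers, and equal to the Kawasaki index (this is exactly the content of the two bullets of the theorem stated just before \ref{theo:QR-multiplicities}, which you invoke as ``continuity'' without proof). Finally, the vanishing of $\sum_j m_j(\lambda,k)$ for $\lambda\in\tilde\Lambda\setminus A_G$ is a statement about parity and lattices in the reduced spinc structures $\Scal_{j,\xi}^{\lambda,k}$; it does not follow from the absence of $e^\lambda$ in the Weyl numerator.
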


\medskip

Let us fix $j$. Let $\Acal_j$ be the collection of closed rational polyhedrons formed by the faces of the closures $\overline{\cgot}$ 
of the chambers in $I_j$ (they are contained in $\tgot^*_{\geq 0}$).
To each $P\in \Acal_j$, we associate
\begin{itemize}
\item the sub-space $E_P\subset \t^*\times\R$ generated by $(v,1),v\in P$ and the lattice $\tilde{\Lambda}_P:=(\tilde{\Lambda}\times \Z) \cap E_P$,
\item the cone $C_P:=\{(t\xi,t), t\geq 0, \xi\in P\}\subset E_P$,
\item the quasi-polynomial $m_P:\tilde{\Lambda}_P\to \C$ such that $m_P(\lambda,k)={\rm m}_{j,\cgot}(\lambda,k)$, for 
$(\lambda,k)\in \tilde{\Lambda}_P$,  if $P$ is a face of 
$\overline{\cgot}$. 
\end{itemize}

By inclusion-exclusion, we see that the multiplicity function $m_j$ admits the decomposition
\begin{equation}\label{eq:m-G-decomposition}
m_j=\sum_{P\in\Acal_j} \alpha_P m_{P}[C_P],
\end{equation}
for some appropriate choice of constants $\alpha_P\in \Z$. In other words, the multiplicity function $m_j$ is in the space $\Scal(\tilde{\Lambda})$. 
The relation $m_G=\sum_j m_j$ given by Theorem \ref{theo:QR-multiplicities} shows that $m_G\in \Scal(\tilde{\Lambda})$. The proof of Theorem \ref{theo:m-G-piecewise} is completed. $\Box$

\medskip

\begin{example}
Let us give a simple example for the group $G=SU(2)$.
Consider  $\t^*$ with basis $\rho$. In this basis $A_G=\{\lambda; \lambda\in\Z, \lambda>0\}$.
We consider the line bundle $\CL$ associated to $(\rho,\rho)$ on the spin manifold $M=G/T\times G/T$. We see that $\QS_G(M,\CL)$ is the tensor product
$V(k\rho)\otimes V(k\rho)$, and $V(k\rho)$ is the irreducible representation of dimension $k$ of $G$. So $\QS_G(M,\CL)$ is the sum of the representations $V(j\rho)$, with $j$ odd and less than $2k$.
If we write $\QS_G(M,\CL^k)=\sum_{\lambda>0} m_G(\lambda,k)V(\lambda \rho)$,
 $$m_G(\lambda,k)=\frac{1}{2}(1-(-1)^\lambda), \ {\rm for}\ 1\leq \lambda\leq 2k.$$
\end{example}

%
%
%
%
%
%
%
%

\subsection{Equivariant cohomology}

Let $N$ be a $G$-manifold and let $\A(N)$ be the space of differential forms on $N$,
graded by its exterior degree.
Following \cite{ber-ver82} and \cite{Witten82},
an equivariant form is a $G$-invariant smooth function
$\alpha: \g\to \A(N),$ thus
$\alpha(X)$ is a differential form on $N$ depending smoothly of $X\in \g$.
Consider the operator
\begin{equation}\label{DX}
d_\g\alpha(X)=d\alpha(X)-\iota(v_X)\alpha(X)
\end{equation}
 where $\iota(v_X)$ is the contraction by the vector field $v_X$
generated by the action of $-X$ on $N$.
Then $d_\g$ is an odd operator with square $0$,
and  the equivariant cohomology is defined
to be the cohomology space of $d_\g$.
It is important to note that the dependence of $\alpha$ on $X$ may be $C^\infty$.
If the dependence of $\alpha$ in $X$ is polynomial, we denote by
$H^*_G(N)$ the corresponding  $\Z$-graded algebra.
By definition, the grading of $P(X)\otimes \mu$, $P$ a homogeneous polynomial and $\mu$ a  differential form on $N$,
is the exterior degree of $\mu$ plus twice the polynomial degree in $X$.

The line bundle  $\CL\to M$ determines the closed equivariant form
$\Omega(X)=\Omega-\ll \phi_G,X\rr $ on $M$.
Here we will not assume any non degeneracy condition on $\Omega$.

Choose a  $G$-invariant Riemannian metric on $M$.
Let $$J(A)=\det{}_{\R^{2d}}\left( \frac{e^{A/2}-e^{-A/2}}{A}\right),$$ an invariant function of $A$.
 Then $J(0)=1$. Consider $\frac{1}{J^{1/2}(A)}$ and its Taylor expansion at $0$:
 $$\frac{1}{J^{1/2}(A)}=\det{}_{\R^{2d}}^{1/2}(\frac{A}{e^{A/2}-e^{-A/2}})= \sum_{n=0}^{\infty}  B_{n}(A).$$

 Each  function $B_{n}(A)$ is an invariant polynomial of degree $n$ (in fact $B_n$ is non zero only for $n$ even)  and by the Chern Weil construction, and choice of connections on $TM$,  the function
  $B_{n}$
determines a closed equivariant form
${\Ahat}_{n}(M)(X)$ on $M$ of homogeneous equivariant degree $2n$. Remark that ${\Ahat}_0(M)=1$.
We define the formal series of equivariant  forms:
   $${\Ahat}(M)(X)=\sum_{n=0}^{\infty} {\Ahat}_{n}(M)(X).$$

As $M$ is compact, we can find a positive constant $r_M$
such that, for $\|X\|<r_M$, these series of equivariant forms  is convergent.
   In particular ${\Ahat}(M)(0)$ is a closed differential form on $M$ which represents the usual $\Ahat$ class of $M$.

\subsection{The equivariant index}

Recall  the ``delocalized index formula" (see \cite{BGV}). For $X\in \g$ such that $\|X\|<r_M$,
so that $\Ahat(M)(X)$ is well defined, we have
\begin{equation}\label{eq:BV}
\QS_G(M,\CL)(\exp X)=\frac{1}{(-2i\pi)^{d}}\int_{M} e^{-i\Omega(X)}\Ahat(M)(X).
\end{equation}

Here  $d=\dim M/2$. Note that Kostant relations (\ref{eq:kostant-rel}) implies that the equivariant form
$\Omega(X)=\Omega-\langle\phi_G,X\rangle$ is closed. In other words $\iota(X)\Omega+ d\langle\phi_G,X\rangle=0$
for any $X\in\g$.

For each integer $n$, consider
the analytic function on $\g$ given by
$$\Ical_n(X)=\frac{1}{(-2i\pi)^{d}}\int_{M} e^{-i \Omega(X)}{\Ahat}_n(M)(X).$$

There is a remarkable relation between the character associated to  $\CL^k$ and the dilation  $X\to  X/k$   on $\g$.
\begin{lemma}\label{crucial}
When $X\in \g$  is such that $\|X\|<r_M$, then, for any $k\geq 1$, one has
$$\QS_G(M,\CL^k)(\exp (X/k))=k^d\sum_{n=0}^{\infty} \frac{1}{k^n} \Ical_n(X).$$
\end{lemma}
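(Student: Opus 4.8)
The plan is to start from the delocalized index formula \eqref{eq:BV}, applied with $\CL$ replaced by $\CL^k$ and $X$ replaced by $X/k$. The key observation is that the curvature of $\CL^k$ is $-ik\Omega$ and the associated moment map is $k\phi_G$, so the equivariant form attached to $\CL^k$ is $k\Omega(X) = k\Omega - \langle k\phi_G, X\rangle$. Evaluating at $X/k$ gives $k\Omega(X/k) = k\Omega - \langle k\phi_G, X/k\rangle = k\Omega - \langle \phi_G, X\rangle$. Here is the crucial point, exactly the ``dichotomy'' emphasized in the introduction: the factor $k$ that scales $\Omega$ in the exponential is exactly cancelled by the factor $1/k$ coming from the dilation $X\mapsto X/k$ in the moment-map term, so that $e^{-ik\Omega(X/k)} = e^{-i\Omega(X)}$ is independent of $k$. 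By contrast, the $\Ahat$ class of $M$ is unchanged when passing from $\CL$ to $\CL^k$ (it depends only on $TM$), so \eqref{eq:BV} for $(M,\CL^k)$ reads, for $\|X/k\|<r_M$,
$$
\QS_G(M,\CL^k)(\exp(X/k)) = \frac{1}{(-2i\pi)^d}\int_M e^{-ik\Omega(X/k)}\,\Ahat(M)(X/k)
= \frac{1}{(-2i\pi)^d}\int_M e^{-i\Omega(X)}\,\Ahat(M)(X/k).
$$

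Next I would expand $\Ahat(M)(X/k) = \sum_{n=0}^\infty \Ahat_n(M)(X/k)$. Since $\Ahat_n(M)$ has homogeneous equivariant degree $2n$, its coefficients are polynomial in $X$ of degree $\le n$ (the exterior-degree part and the polynomial-in-$X$ part together make up degree $2n$); more precisely, writing $\Ahat_n(M)(X) = \sum_{a+2b = 2n} \mu_{a,b}\, P_b(X)$ with $\mu_{a,b}$ a differential form of exterior degree $a$ and $P_b$ a polynomial of degree $b$ in $X$, one has $\Ahat_n(M)(X/k) = \sum k^{-b}\mu_{a,b}P_b(X)$. The subtlety is that different terms inside a single $\Ahat_n$ get different powers of $k$. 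However, when one integrates over $M$, only the top exterior-degree component $a = 2d$ survives, which forces $2b = 2n - 2d$, i.e. $b = n-d$, hence a single uniform power $k^{-(n-d)} = k^{d-n}$. Therefore
$$
\int_M e^{-i\Omega(X)}\,\Ahat_n(M)(X/k) = k^{d-n}\int_M e^{-i\Omega(X)}\,\Ahat_n(M)(X),
$$
where one must be slightly careful: inside $e^{-i\Omega(X)}\Ahat_n(M)(X/k)$ the exterior degrees from $e^{-i\Omega(X)}$ combine with those from $\Ahat_n(M)(X/k)$, and picking the $2d$-form component again pins down the power of $k$ uniformly as $k^{d-n}$, because the exterior degree of a term and its homogeneity in $X$ are linked by the equivariant grading of $\Ahat_n$ (the $e^{-i\Omega(X)}$ factor contributes no extra polynomial dependence in $X$ beyond what its own exterior degree dictates, since $\Omega(X)$ is equivariantly of degree $2$). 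Summing over $n$ and multiplying by $k^d$,
$$
\QS_G(M,\CL^k)(\exp(X/k)) = k^d\sum_{n=0}^\infty \frac{1}{k^n}\cdot\frac{1}{(-2i\pi)^d}\int_M e^{-i\Omega(X)}\,\Ahat_n(M)(X) = k^d\sum_{n=0}^\infty \frac{1}{k^n}\,\Ical_n(X),
$$
which is the claim.

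The main obstacle is justifying the interchange of the infinite sum over $n$ with the integral over $M$ and establishing genuine (not merely formal) equality of analytic functions on the region $\|X\|<r_M$: for fixed $X$ with $\|X\| < r_M$ we have $\|X/k\|\le\|X\| < r_M$ for all $k\ge 1$, so $\Ahat(M)(X/k)$ is given by a convergent series of forms and \eqref{eq:BV} applies; the rearrangement $\sum_n \int_M e^{-i\Omega(X)}\Ahat_n(M)(X/k) = \sum_n k^{d-n}\int_M e^{-i\Omega(X)}\Ahat_n(M)(X)$ is legitimate because each side is, term by term, the same finite-dimensional integral, and the convergence of the left side (hence of the right) follows from the convergence of $\sum_n \Ahat_n(M)(Y)$ uniformly for $Y$ in a neighborhood of $0$ together with compactness of $M$. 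One should also note that only even $n$ contribute (since $B_n = 0$ for odd $n$), so the series is really in $1/k^2$, but this is not needed for the statement. Everything else is the bookkeeping of equivariant degrees described above.
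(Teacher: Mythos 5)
Your final displayed identity is correct, but the two intermediate claims you use to reach it are both false; they happen to compensate for each other, and you do not notice or exploit this, so the argument as written is not a proof.

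First, the asserted ``dichotomy'' $e^{-ik\Omega(X/k)}=e^{-i\Omega(X)}$ is wrong. Your own computation gives $k\Omega(X/k)=k\Omega-\langle\phi_G,X\rangle$, so $e^{-ik\Omega(X/k)}=e^{-ik\Omega}\,e^{i\langle\phi_G,X\rangle}$: the moment--map term indeed loses its $k$, but the curvature two-form keeps it. This equals $e^{-i\Omega(X)}$ only when $k=1$ or $\Omega=0$. Second, the claimed identity
$\int_M e^{-i\Omega(X)}\hat A_n(M)(X/k)=k^{d-n}\int_M e^{-i\Omega(X)}\hat A_n(M)(X)$
is also false. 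When you select the component of total exterior degree $2d$, the factor $\frac{(-i\Omega)^a}{a!}$ from $e^{-i\Omega(X)}$ contributes exterior degree $2a$ but \emph{no} polynomial degree in $X$, so the matching piece of $\hat A_n$ has exterior degree $2d-2a$ and polynomial degree $q=n-d+a$; the rescaling $X\mapsto X/k$ then produces $k^{-q}=k^{d-n-a}$, which depends on $a$. Your parenthetical remark that ``the $e^{-i\Omega(X)}$ factor contributes no extra polynomial dependence in $X$'' is precisely the problem, not its resolution: that factor soaks up exterior degree without bringing any compensating power of $k$, so the powers are not uniform.

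These two errors cancel exactly. Keeping the true expression $e^{-ik\Omega}e^{i\langle\phi_G,X\rangle}$ restores a factor $k^{a}$ from $\frac{(-ik\Omega)^a}{a!}$ which, combined with $k^{-q}=k^{-(n-d+a)}$ from $\hat A_n(M)(X/k)$, gives the uniform power $k^{a-q}=k^{d-n}$ for every $a$. This is exactly the content of the paper's Lemma~\ref{lem:gradedstupidcompact}: one must keep $b(X,k)=e^{-ik\Omega}e^{i\langle\phi_G,X\rangle}$ as a polynomial in $k$ and do the top-degree bookkeeping for the \emph{product} $b(X,k)\hat A_n(M)(X/k)$. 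The introduction's informal remark that the Chern character ``is unchanged'' describes the outcome of that bookkeeping, not a substitution you may perform at the outset. To repair your argument, replace $e^{-i\Omega(X)}$ with $e^{-ik\Omega}e^{i\langle\phi_G,X\rangle}$ throughout and track the extra $k^a$ from the $a$-th power of the curvature; the power of $k$ in the $2d$-form component of $e^{-ik\Omega(X/k)}\hat A_n(M)(X/k)$ then really is uniformly $k^{d-n}$, which is what Lemma~\ref{lem:gradedstupidcompact} records. Your concluding remarks about justifying the interchange of $\sum_n$ and $\int_M$ (convergence of $\sum_n\hat A_n(M)(Y)$ for $\|Y\|<r_M$ plus compactness of $M$) are fine and are indeed the analytic content needed beyond the algebraic identity.
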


\begin{proof}
Write $\Ahat(M)(X)=\sum_{n=0}^{\infty} \tau_{n}(X)$
as a sum of forms with  coefficients homogeneous polynomials in $X$ of degree $n$.
Thus $$\Ahat_n(M)(X)=\sum_{q\leq n}(\tau_q(X))_{[n-2q]}$$ where
 $\omega_{[s]}$ is the component of exterior degree $s$ of a differential form
 $\omega$.

   For $\|X\|<r_M$, $\sum_{n=0}^{\infty}\tau_{n}(X)$ is a
convergent series with sum the equivariant $\Ahat$ class.
We obtain
$$
\QS_G(M,\CL^k)(\exp (X/k))=
\frac{1}{(-2i\pi)^{d}}\int_{M} e^{-i k\Omega(X/k)} (\sum_{n=0}^{\infty}\tau_{n}(X/k)).
$$

Let $b(X,k)=e^{-i k\Omega(X/k)}= e^{-i k\Omega} e^{i\ll\phi_G,X\rr}$.
Remark that $b(X,k)$ depends polynomially of $k$.
We rewrite
\begin{equation}\label{eq:gradedstupid}
\QS_G(M,\CL^k)(\exp (X/k))=
\frac{1}{(-2i\pi)^{d}}\int_{M} b(X,k) (\sum_{n=0}^{\infty}\frac{1}{k^n}\tau_{n}(X)).
\end{equation}

Only the  top exterior degree term  contributes to the integral. We compute it in terms of the equivariant forms $\Ahat_n(M)$.
\begin{lemma}\label{lem:gradedstupidcompact}
$$
\Big[b(X,k) \Big(\sum_{n=0}^{\infty}\frac{1}{k^n}\tau_{n}(X)\Big)\Big]_{[2d]}
=k^d \Big[ e^{-i\Omega(X)}\Big(\sum_{n=0}^{\infty}\frac{1}{k^n}\Ahat_{n}(M)(X)\Big)\Big]_{[2d]}.
$$
\end{lemma}
\begin{proof}
The coefficient of $k^d \frac{1}{k^n}$ in the left hand side is
$$\sum_{a=0}^d \frac{(-i\Omega)^a}{a!} (\tau_{n+a-d}(X))_{[2d-2a]} e^{i\ll\phi_G,X\rr}.$$
This is the term of exterior degree $2d$ of
$e^{-i\Omega(X)}\Ahat_n(M)(X)$.
\end{proof}

This identity implies the lemma.
\end{proof}

\subsection{Twisted Duistermaat-Heckman distributions}

Let $\nu$ be a closed equivariant form on $M$ with polynomial coefficients.
Let $-i\Omega(X)$ $=-i\Omega+i\ll\phi_G,X\rr $ be our equivariant curvature.
As $\nu(X)$ depends polynomially on $X$,
$\int_M e^{-i\Omega(X)}\nu(X)$ is a function on $\g$ of at most polynomial growth.
\begin{definition}
We denote by $DH^G(M,\Omega,\nu)\in\Dcal'(\g^*)^G$ the Fourier transform of
$\frac{1}{(-2i\pi)^{d}}\int_M e^{-i\Omega(X)}\nu(X)$.
\end{definition}

We thus have, for $\varphi$ smooth with compact support,
\begin{eqnarray*}
\ll DH^G(M,\Omega,\nu),\varphi\rr &=&\frac{1}{(-2i\pi)^{d}}\int_{M}\int_{\g} e^{-i\Omega(X)}\nu(X)\widehat{\varphi}(X)dX\\
&=&\frac{1}{(-2i\pi)^{d}}\int_{M} e^{-i\Omega}[\nu(-i\partial)\varphi](\phi_G(m)).
\end{eqnarray*}

If $\nu(X)=\sum_a p_a(X)\nu_a$ with polynomial functions $p_a$ on $\g$ and differential forms $\nu_a$ on $M$, the expression
$$[\nu(-i\partial)\varphi](\phi_G(m))= \sum_a [p_a(-i\partial)(\varphi)](\phi_G(m)) \nu_a$$ is a differential form on $M$ supported on the pull-back by
$\phi_G$ of the  support of $\varphi$. This shows, and this will be used in our application to a line bundle with
proper moment map,
that $DH^G(M,\Omega,\nu)$ is a well defined distribution supported on $\phi_G(M)$, provided $\phi_G$ is proper. Notice that
the distribution $DH^G(M,\Omega,\nu)$ depends only of the class $[\nu]\in H_G^*(M)$.

Recall that we have associated to $(M,\Lcal)$ the distributions on $\g^*$ given by
\begin{eqnarray*}
\Psi^{(M,\Lcal)}_k &:=&  \Fcal_\g \circ \res_\g(k) \Big(\QS_{G}(M,\CL^k)(e^{X})\Big),\\
\Theta^{(M,\Lcal)}_k &:=& \Fcal_\g\circ \res_\g(k)\Big(\QS_{G}(M,\CL^k)(e^{X})j_\g^{1/2}(X)\Big).
\end{eqnarray*}

The distribution $\Theta^{(M,\Lcal)}_k$ is the weighted sum $k^r\sum_{\lambda\in A_G} m_G(\lambda,k)\beta_{\lambda/k}$
of the canonical measures on the orbits $G\lambda/k$.

\begin{theorem}\label{asympt1}
Let $M$ be a compact spin even dimensional oriented manifold with a $G$ equivariant line bundle $\CL$.
When $k$ tends to $\infty$, we have the asymptotic expansions
\begin{equation}\label{eq:asymptotics-psi-k}
\Psi^{(M,\Lcal)}_k\equiv k^{d}\  \sum_{n=0}^{\infty}
   \frac{1}{k^n} DH^G(M,\Omega, \Ahat_n(M))
\end{equation}
and
\begin{equation}\label{eq:asymptotics-theta-k}
\Theta^{(M,\Lcal)}_k \equiv \  j_\g^{1/2}(i\partial/k)\Big(k^d\sum_{n=0}^{\infty}
   \frac{1}{k^n} DH^G(M,\Omega, \Ahat_n(M))\Big) .
\end{equation}

\end{theorem}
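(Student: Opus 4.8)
The plan is to derive both asymptotic expansions from the "delocalized index formula" (\ref{eq:BV}) combined with the elementary asymptotic machinery of Section \ref{sec:asymptotics}. The starting point is Lemma \ref{crucial}, which already expresses the rescaled character $\QS_G(M,\CL^k)(\exp(X/k))$, for $\|X\|<r_M$, as $k^d\sum_{n=0}^\infty k^{-n}\Ical_n(X)$ with $\Ical_n(X)=\frac{1}{(-2i\pi)^d}\int_M e^{-i\Omega(X)}\Ahat_n(M)(X)$. By definition, $\Fcal_\g$ of $\Ical_n$ is exactly $DH^G(M,\Omega,\Ahat_n(M))$. So morally (\ref{eq:asymptotics-psi-k}) just says $\Psi^{(M,\CL)}_k=\Fcal_\g\circ\res_\g(k)(\QS_G(M,\CL^k)(e^X))\equiv k^d\sum_n k^{-n}\Fcal_\g(\Ical_n)$.

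The technical subtlety is that Lemma \ref{crucial} only holds for $X$ in a neighborhood $\{\|X\|<r_M\}$ of $0$, whereas $\QS_G(M,\CL^k)(e^X)$ is a genuine global function on $\g$ (a finite Fourier series), and $\res_\g(k)$ applied to it samples $X/k$, which for fixed $X$ does lie in that neighborhood once $k$ is large. So first I would fix a test function $\varphi$ on $\g^*$ and write $\ll\Psi^{(M,\CL)}_k,\varphi\rr=\int_\g \QS_G(M,\CL^k)(e^{X/k})\,\widehat{\varphi}(X)\,dX$. The issue is that $\widehat\varphi$ is not compactly supported, so I cannot directly restrict to $\|X/k\|<r_M$. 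To handle the tail, I would introduce a cutoff: choose $\chi\in C^\infty_{cpt}(\g)$ equal to $1$ near $0$ and supported in $\{\|X\|<r_M\}$, and split $\QS_G(M,\CL^k)(e^{X/k})\widehat\varphi(X)=\chi(X)\,(\cdots)+(1-\chi(X))\,(\cdots)$. On the support of $\chi$ I may apply Lemma \ref{crucial} and then invoke Proposition \ref{prop:technical-asymptotics} (with $\tau$ a smooth compactly supported modification and $b$ the relevant factor), or more directly the informal principle (\ref{eq:informal}) with $b(X,k)=e^{-ik\Omega}e^{i\ll\phi_G,X\rr}$ and $\tau=\Ahat$ class coefficients, noting $b$ depends polynomially on $k$; this produces precisely the claimed Laurent series $k^d\sum_n k^{-n}\Fcal_\g(g_n)$ with $g_n(X)=\int_M e^{-i\Omega}[\Ahat_n(M)(-i\partial)\varphi](\phi_G)$ style terms. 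For the remaining piece with $(1-\chi)$, one uses Lemma \ref{eq:majoration-QS-L-k}: $|\QS_G(M,\CL^k)(g)|\leq P(k)$ uniformly in $g$, so the tail integral is bounded by $P(k)\int_{\|X\|\geq r_M}|\widehat\varphi(X)|\,dX$; but this is not yet $O(k^{-\infty})$ since $r_M$ is fixed. The fix is that we are integrating $\widehat\varphi(X)$, a fixed Schwartz function, against something sampled at $X/k$; rescaling, the tail is $\int_{\|X/k\|\geq r_M}$, i.e. $\|X\|\geq k r_M$, over which $\int |\widehat\varphi|$ decays faster than any power of $k$, absorbing the polynomial bound $P(k)$. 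This gives (\ref{eq:asymptotics-psi-k}).

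For (\ref{eq:asymptotics-theta-k}) I would observe from (\ref{eq:def-phi-k}) that $\Theta^{(M,\CL)}_k=\Fcal_\g\circ\res_\g(k)\big(\QS_G(M,\CL^k)(e^X)\,j_\g^{1/2}(X)\big)$, which differs from $\Psi^{(M,\CL)}_k$ by the insertion of the invariant analytic factor $j_\g^{1/2}(X/k)=\res_\g(k)(j_\g^{1/2})$. Since $j_\g^{1/2}$ is analytic with $j_\g^{1/2}(0)=1$ and all derivatives of polynomial growth, I would apply the same Taylor-expansion argument: expand $j_\g^{1/2}(X/k)=\sum_m k^{-m}(j_\g^{1/2})_m(X)$, multiply by the already-established expansion of $\res_\g(k)(\QS_G(M,\CL^k)(e^X))$, and collect terms — which is exactly the action of the differential operator $j_\g^{1/2}(i\partial/k)$ on the Laurent series, by the definition of $d(i\partial/k)$ given just before Proposition \ref{prop:technical-asymptotics}. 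Concretely, writing both as Laurent series and using that the product of two such asymptotic expansions is the Cauchy product (again justified by Proposition \ref{prop:technical-asymptotics} applied with $b$ carrying the extra polynomial-in-$k$ data and $\tau=j_\g^{1/2}$, as in (\ref{eq:informal})) yields $\Theta^{(M,\CL)}_k\equiv j_\g^{1/2}(i\partial/k)\big(k^d\sum_n k^{-n}DH^G(M,\Omega,\Ahat_n(M))\big)$.

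The main obstacle is the bookkeeping in the first part: carefully showing that the cutoff $\chi$ can be introduced with a total error that is genuinely $o(k^{n_o-N})$ for every $N$ — i.e. handling the non-compactly-supported $\widehat\varphi$ against the only polynomially-bounded character — and confirming that the "only top exterior degree contributes" computation of Lemma \ref{lem:gradedstupidcompact} correctly matches the coefficient of $k^{d-n}$ with $DH^G(M,\Omega,\Ahat_n(M))$ term by term. Once the $\Psi$-expansion is pinned down, deducing the $\Theta$-expansion is purely formal manipulation of Taylor series via the already-proven Proposition \ref{prop:technical-asymptotics}, so I expect no real difficulty there.
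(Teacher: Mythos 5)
Your proposal follows essentially the same route as the paper's proof: a cutoff $h(X/k)$ supported where the delocalized index formula (\ref{eq:BV}) applies after rescaling, a tail estimate $\mathrm{J}_k=O(k^{-\infty})$ obtained from Lemma \ref{eq:majoration-QS-L-k} combined with the rapid decay of $\widehat\varphi$ on $\|X\|\geq kr$, and an application of the vector-valued form of Proposition \ref{prop:technical-asymptotics} together with Lemma \ref{lem:gradedstupidcompact} to recognize the coefficients as the twisted Duistermaat--Heckman distributions $DH^G(M,\Omega,\Ahat_n(M))$. Your self-correction from a cutoff in $X$ to one in $X/k$ lands on exactly what the paper writes, and the passage from (\ref{eq:asymptotics-psi-k}) to (\ref{eq:asymptotics-theta-k}) is, as you and the paper both indicate, the identical argument with the extra smooth factor $j_\g^{1/2}(X/k)$ absorbed into $\tau^h$.
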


\begin{proof}
 We present  a detailed proof since we will have to
adapt this proof to the case of a non compact manifold $M$.

We fix a $G$-invariant function $h:\g\to\R$ equal to $1$    on a  small ball $\|X\|\leq r$ and
with compact support contained in $\|X\|<r_M$,  so that $h(X){\hat A}(M)(X)$  is a smooth  compactly supported function on
$\g$ with value differential forms on $M$.

Let $\varphi$ be a  smooth compactly supported function on $\g^*$.
We have $\ll\Psi^{(M,\Lcal)}_k,\varphi\rr= \mathrm{I}_k+ \mathrm{J}_k$ where
$$
\mathrm{I}_k=
\int_{\g}\QS_G(M,\Lcal^k)(e^{X/k})h(X/k)\widehat{\varphi}(X)dX,
$$
and
$$
\mathrm{J}_k=\int_{\g}\QS_G(M,\Lcal^k)(e^{X/k})(1-h(X/k))\widehat{\varphi}(X)dX.
$$
 We will see that $\mathrm{J}_k=O(k^{-\infty})$.
For estimating $\mathrm{I}_k$,
we will use  the delocalized index formula (\ref{eq:BV}).

  Using the fact that  $|\QS_{G}(M,\CL^k)(g)|$ is uniformly bounded by a polynomial in $k$,  that
  the support of $(1-h(X/k))$  is contained in the set $\|X\|\geq r k$, $r>0$, and that the function
  $\widehat{\varphi}$ is rapidly decreasing, we see that $\mathrm{J}_k=O(k^{-\infty})$.

We now analyze $\mathrm{I}_k$.
We can use Formula (\ref{eq:BV}) since $\|X/k\|\leq r_M$.
Thus
$$
\mathrm{I}_k=
\frac{1}{(-2i\pi)^{d}}\int_{M} \int_\g e^{-ik\Omega(X/k)}\Ahat(M)(X/k) h(X/k)\widehat{\varphi}(X)dX.
$$

Proposition \ref{prop:technical-asymptotics} extends to the case of vector valued
functions on $\g$.
Let $E$ be a finite dimensional space,
and let $\tau:\g\to  \Lambda E$ be a smooth function  with at most polynomial growth,
 as well as all its derivatives.
The coefficients $\tau_n(X)$ of its Taylor series $\sum_{n=0}^{\infty} \tau_n(X)$
are   $\Lambda E$ valued polynomial functions  on $\g$ homogeneous of degree $n$.
Let $b(X,k)=\sum_{m=0}^d k^m b_m(X)$
where $b_m(X)$  are   smooth functions  of $X$ with value in $\Lambda E$ with at most polynomial growth.
 Thus
$$b(X,k)\sum_{n=0}^{\infty}\tau_n(X/k)=\Big(\sum_{m=0}^d k^m b_m(X)\Big)
\Big(\sum_{n=0}^{\infty} \frac{1}{k^n}\tau_n(X)\Big)$$
is a  Laurent series in $\frac{1}{k}$
 of functions on $\g$ with values in $\Lambda E$.

We have
\begin{equation}\label{eq:asympbtau}
\int_\g b(X,k) \tau(X/k) \widehat{\varphi}(X) dX\equiv
\int_\g b(X,k) \Big(\sum_{n=0}^{\infty} \tau_n(X/k)\Big)
\widehat{\varphi}(X) dX.
\end{equation}

In short, we replace $\tau$ by its Taylor series.

\bigskip

Consider the differential form
  $h(X){\hat A}(M)(X)$.
  For each $x\in M$, $\tau^h_x(X)=h(X) {\hat A}(M)|_x(X)$
  is a smooth compactly supported function on $\g$ with values in $\Lambda T_x^*M$.
  Let $$b_x(X,k)=
  e^{i\langle\phi_G(x),X\rangle} e^{-ik\Omega_x}.$$
  It depends polynomially of $k$ and each coefficient of this polynomial is a bounded function of $X$ (as it is proportional to $e^{i\ll\phi_G(x),X\rr}$).
  So we can certainly apply the asymptotic formula (\ref{eq:asympbtau}).

  Let $\mathrm{I}_k^x=\frac{1}{(-2i\pi)^d}\int_\g b_x(X,k)\tau^h_x(X/k) \widehat{\varphi}(X) dX$,
  with values in $\Lambda T^*_xM$. So $x\mapsto \mathrm{I}_k^x$ is a differential form
  and  $\mathrm{I}_k=\int_{x\in M}\mathrm{I}_k^x.$

The manifold $M$ is compact.
 Proposition \ref{prop:technical-asymptotics} gives  us an asymptotic expansion for $\mathrm{I}_k^x$, and
 the proof shows that the rest is bounded uniformly.
 So the asymptotic expansion  of $\mathrm{I}_k$ is obtained by replacing (at each $x\in M$)
 $\tau^h_x(X/k)$ by its Taylor series.
 As $h$ is identically $1$ near zero, we obtain in the notations of the proof of Lemma \ref{crucial},
$$
 \mathrm{I}_k\equiv  \frac{1}{(-2i\pi)^d}\int_M \int_\g b(X,k)
 \Big(\sum_{n=0}^{\infty} \frac{1}{k^n}\tau_n(X)\Big)\widehat{\varphi}(X) dX.
$$
By Lemma \ref{lem:gradedstupidcompact}, this is
$$k^d \frac{1}{(-2i\pi)^d}\int_M \int_\g e^{-i\Omega(X)}
\Big(\sum_{n=0}^{\infty} \frac{1}{k^n}\Ahat_n(M)(X)\Big)\widehat{\varphi}(X) dX.$$

 By definition of the twisted Duistermaat-Heckman measure, we obtain
  the expansion of Formula (\ref{eq:asymptotics-psi-k}).

 The proof of Expansion formula (\ref{eq:asymptotics-theta-k}) is identical.
\end{proof}

\subsection{Asymptotics localized at $g\in G$}

For applications to formal geometric quantization, we state an analogous asymptotic descent formula.
Now let $g\in G$.  Let $G(g)$ be the centralizer of $g$ in $G$, and
 $\g(g)$ its Lie algebra.

Consider the manifold $M^g$ of fixed points of the action of $g$ on $M$: it may have several connected components $M^g_a$, all of even dimension. Since $M$ is spin, $M^g$ admits an orientation (that is all connected components are oriented). Let $\Ncal^g$ be the normal bundle of
$M^g$ in $M$ and let
$$
R_{\Ncal^g}(X)=R_{\Ncal^g}+\mu_{\Ncal^g}(X)
$$
be its equivariant curvature \cite{BGV}.

We denote by $\Omega_g(X)$ the restriction of $\Omega(X)$ on $M^g$. We consider the following equivariant form
$$
D_g(M)(X)=\det{}^{1/2}_{\Ncal^g}\Big(I-(g^{-1})^{\Ncal^g}e^{-R_{\Ncal^g}(X)}\Big).
$$
Here the square root is chosen such that $\det_{\Ncal^g}^{1/2}(I-(g^{-1})^{\Ncal^g})>0$.

For $X\in\g(g)$ small enough, we have
\begin{equation}\label{eq:QS-g-exp-X}
\QS_G(M,\Lcal)(g\exp(X))=\int_{M^g} c_g \frac{\widehat{A}(M^g)(X)}{D_g(M)(X)}
\, g^{\Lcal}\, e^{-i\Omega_g(X)}
\end{equation}
where

$\bullet$ $g^{\Lcal}$ is the locally constant function
on $M^g$ with value a complex number of modulus $1$ given by the action of $g$ on the fiber of $\CL\vert_{M^g}$.
We write $u_a$ for the restriction of $g^{\Lcal}$ to a connected component $M^g_a$. This is a complex number of
modulus 1. If $g$ is of finite order, $u_a$ is a root of unity.

$\bullet$ $c_g=\epsilon_g(-2i\pi)^{-\dim M/2}(2\pi)^{\mathrm{rk}\, \Ncal^g/2}$ is a locally constant function
independent of $\CL$. Here the sign $\epsilon_g=\pm$ depends only of the action of $g$ on the spinor bundle $\Scal\vert_{M^g}$.


Consider the equivariant form
$V_g(\CL,k)(X)=c_g (g^{\Lcal})^k \frac{\widehat{A}(M^g)(X)}{D_g(M)(X)}$ on $M^g$.
Then we have $V_g(\CL,k)(X)=\sum_{n=0}^{\infty} V_g(n,k)(X)$
where $V_g(n,k)(X)$ are $G(g)$-equivariant forms on $M^g$ homogeneous of even degree $2n$.

Thus, for $X\in \g(g)$ small,
we have
$$
\QS_G(M,\Lcal^k)(g\exp(X))= \sum_{n=0}^{\infty}\int_{M^g}e^{-ik\Omega_g(X)} V_g(n,k)(X).
$$

Here Lemma \ref{crucial} becomes
\begin{lemma}\label{cruc-g}
When $X\in \g(g)$ is sufficiently small, then, for any $k\geq 1$, one has\footnote{The term $k^{\frac{\dim M^g}{2}} \Ical_g(n,k)(X)$ must be understood as the sum $\sum_a k^{\frac{\dim M^g_a}{2}}\int_{M_a^g} e^{-i\Omega_g(X)} V_g(n,k)(X)$.}
$$\QS_G(M,\CL^k)(g\exp (X/k))=\sum_{n=0}^{\infty} \frac{1}{k^{n}}k^{\frac{\dim M^g}{2}} \Ical_g(n,k)(X)$$
where $\Ical_g(n,k)(X):=\int_{M^g} e^{-i\Omega_g(X)} V_g(n,k)(X)$.
\end{lemma}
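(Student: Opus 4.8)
The plan is to prove Lemma \ref{cruc-g} by the same graded-degree bookkeeping used in the proof of Lemma \ref{crucial}, now applied fiberwise over the fixed-point manifold $M^g$ and componentwise. First I would fix a connected component $M^g_a$ of dimension $2d_a$ and recall that, for $X\in\g(g)$ sufficiently small, the delocalized fixed-point formula \eqref{eq:QS-g-exp-X} applied to $\CL^k$ gives
$$
\QS_G(M,\CL^k)(g\exp(X/k))=\sum_a \int_{M^g_a} e^{-ik\Omega_g(X/k)}\, V_g(\CL,k)(X/k),
$$
where $V_g(\CL,k)(X)=c_g (g^{\CL})^k \widehat A(M^g)(X)/D_g(M)(X)$ is a $G(g)$-equivariant form on $M^g$, analytic in $X$ near $0$; note that replacing $\CL$ by $\CL^k$ affects only the locally constant factor $(g^{\CL})^k$ and the power $e^{-ik\Omega_g}$, while the curvature forms $R_{\Ncal^g}$, $\widehat A(M^g)$, $D_g(M)$ and their equivariant extensions are unchanged since they come from the connection, not from $\CL$.

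Next I would expand $V_g(\CL,k)(X)=\sum_{n=0}^\infty V_g(n,k)(X)$ into its homogeneous equivariant components of degree $2n$ (here the $k$-dependence sits in the scalar $(g^{\CL})^k=\prod_a u_a^k$, so each $V_g(n,k)$ is a $G(g)$-equivariant form homogeneous of degree $2n$ that depends on $k$ only through those roots of unity), and write $\Omega_g(X)=\Omega_g-\langle\phi_G,X\rangle$, so that
$$
e^{-ik\Omega_g(X/k)}=e^{-ik\Omega_g}\,e^{i\langle\phi_G,X\rangle},
$$
a polynomial in $k$ whose coefficients are bounded functions of $X$ (being proportional to $e^{i\langle\phi_G(m),X\rangle}$). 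Then, exactly as in Lemma \ref{lem:gradedstupidcompact}, on each component $M^g_a$ only the top exterior degree $2d_a$ contributes to the integral, and a direct count of the powers of $k$ shows
$$
\Big[e^{-ik\Omega_g}e^{i\langle\phi_G,X\rangle}\sum_{n\ge 0}\tfrac{1}{k^n}\big(V_g(n,k)(X)\big)_{\mathrm{hom}}\Big]_{[2d_a]}
=k^{d_a}\Big[e^{-i\Omega_g(X)}\sum_{n\ge 0}\tfrac{1}{k^n}V_g(n,k)(X)\Big]_{[2d_a]},
$$
where on the left one substitutes $X\mapsto X/k$ inside each homogeneous piece of $V_g(\CL,k)$ before regrouping by powers of $1/k$. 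Here $\big(V_g(n,k)\big)_{\mathrm{hom}}$ denotes the decomposition of $V_g(\CL,k)$ into coefficients that are homogeneous polynomials of degree $n$ in $X$, so that $V_g(n,k)(X)=\sum_{q\le n}\big(V_g^{q\mathrm{-poly}}(k)(X)\big)_{[2n-2q]}$, and the left-hand side's $k^{d_a}/k^n$-coefficient is assembled from terms $\frac{(-i\Omega_g)^a}{a!}$ times the appropriate exterior-degree-$(2d_a-2a)$ slice. Summing over $a$ and integrating over $M^g$ yields precisely
$$
\QS_G(M,\CL^k)(g\exp(X/k))=\sum_{n=0}^\infty \tfrac{1}{k^n}\, k^{\frac{\dim M^g}{2}}\,\mathcal I_g(n,k)(X)
$$
with $\mathcal I_g(n,k)(X)=\int_{M^g}e^{-i\Omega_g(X)}V_g(n,k)(X)$, interpreted componentwise as in the footnote.

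The only delicate point, and the one I would be most careful about, is the bookkeeping between \emph{equivariant} degree and \emph{polynomial} degree in the presence of the nontrivial denominator $D_g(M)(X)$: unlike $\widehat A(M)$, the form $D_g(M)(X)^{-1}$ is not of the form $1+(\text{higher equivariant degree})$, so one must first expand $\widehat A(M^g)(X)/D_g(M)(X)$ as an analytic function of $X$ valued in differential forms, organize that expansion into homogeneous polynomial coefficients, and only then argue that the substitution $X\mapsto X/k$ combined with the $k$-power from $e^{-ik\Omega_g}$ produces the claimed clean Laurent series — exactly the role the factorization $b(X,k)=e^{-ik\Omega}e^{i\langle\phi_G,X\rangle}$ played in Lemma \ref{crucial}. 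Everything else is a routine transcription of the compact-case argument to the fixed-point submanifold, using that \eqref{eq:QS-g-exp-X} is valid for $X\in\g(g)$ in a neighborhood of $0$ so that all series converge.
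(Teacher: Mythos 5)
Your proof is correct and is exactly what the paper intends: the paper does not spell out a proof of Lemma \ref{cruc-g} at all, writing only ``Here Lemma \ref{crucial} becomes,'' so the task is precisely to transpose the bookkeeping of Lemma \ref{lem:gradedstupidcompact} to the fixed-point integral (\ref{eq:QS-g-exp-X}), componentwise on $M^g_a$, which is what you do. One small remark: the worry you raise at the end about $D_g(M)(X)^{-1}$ not being of the form $1 + (\text{higher degree})$ is genuine in the sense that the constant term is $\det_{\Ncal^g}^{-1/2}(I-(g^{-1})^{\Ncal^g})\neq 1$, but this causes no trouble — the grading argument only needs that $\widehat A(M^g)(X)/D_g(M)(X)$ is analytic in $X$ near $0$ with values in forms, so that it splits both by polynomial degree (your $V_g^{q\text{-poly}}$) and by equivariant degree (the $V_g(n,k)$), and that $b(X,k)=e^{-ik\Omega_g}e^{i\langle\phi_G,X\rangle}$ is polynomial in $k$ with bounded coefficients; neither of these uses that the constant term equals $1$.
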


Now consider the character $\chi_\lambda$ of the irreducible representation attached to the
admissible orbit $\Ocal_\lambda$. If $g\in G$, the fixed point set $\Ocal_\lambda^g$ is a union of
coadjoint orbits  of the connected component
$G(g)^o$ of $G(g)$. They are all of the same even dimension $r_g=\dim G(g) -\dim T$.

Then we have the formula, for $X\in \g(g)$,
\begin{equation}\label{eq:kirillov-g}
\chi_\lambda(g e^{X})j_{\g(g)}^{1/2}(X)\det{}_{\g/\g(g)}^{1/2}(1-g^{-1}e^{-X})=
 \ll\beta(g,\lambda)(\xi),e^{i \ll \xi,X\rr }\rr.
 \end{equation}
Here the square root is determined such that
 $\det_{\g/\g(g)}^{1/2}(1-g^{-1})>0$ and $\beta(g,\lambda)$ is a measure on $\CO_\lambda^g$ entirely determined by this equation.

Let $\tilde T$ be the torus with weight lattice $\tilde \Lambda$ (the lattice generated by $\Lambda$ and $\rho$). Then
 any $\lambda\in A_G$ defines a character $\tilde{t}\in \tilde{T}\to \tilde{t}^\lambda$. 
Suppose that $g\in T$ and denote by $W_{G(g)}\subset W_G$ the Weyl group of $G(g)^o$. Then
$$
\Ocal_\lambda^g=\bigcup_{\overline{w}\in W_{G(g)}\backslash W_G} G(g)^ow\lambda.
$$

Let $\Rcal_{\g(g)}:\Dcal'(\g(g))^{G(g)^o}\to \Dcal'(\t)^{W_{G(g)}-alt}$ be the canonical isomorphism.

%

 \begin{lemma} \label{lem:Harbeta}
 Choose $\tilde{g}\in \tilde{T}$ above $g\in T$.

 $\bullet$ The measure $\beta(g,\lambda)$ satisfies the relation 
 $$
 \beta(g,\lambda)=\gamma_{\tilde{g}}\sum_{\overline{w}\in W_{G(g)}\backslash W_G} \tilde{g}^{w\lambda}\beta_{G(g)^o w\lambda}
 $$
 where $\gamma_{\tilde{g}}$ is a complex number of modulus $1$.

$\bullet$ We have
$\Rcal_{\g(g)}\left(\beta(g,\lambda)\right)=\gamma_{\tilde{g}} \sum_{w\in W_G} \epsilon(w) \tilde{g}^{w\lambda}\delta_{w\lambda}$.

 \end{lemma}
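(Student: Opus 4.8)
The plan is to derive both statements directly from the Kirillov-type identity \eqref{eq:kirillov-g}, which characterizes $\beta(g,\lambda)$ uniquely as a measure on $\Ocal_\lambda^g$. First I would fix $\tilde g\in\tilde T$ above $g\in T$ and compute the left-hand side of \eqref{eq:kirillov-g} using the Weyl character formula. Writing $W_G$-averages in terms of $W_{G(g)}$-cosets, the numerator $\sum_{w\in W_G}\epsilon(w)\,t^{w(\lambda)}$ of Weyl's formula on $G$ factors, after restricting $t$ to the centralizer $G(g)^o$, through the corresponding alternating sums on $G(g)^o$: for each coset representative $\overline w\in W_{G(g)}\backslash W_G$ one gets the Weyl numerator of $G(g)^o$ evaluated at the weight $w\lambda$, times the scalar $\tilde g^{w\lambda}$ coming from evaluating the remaining characters at $g$. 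The denominator $j_\g^{1/2}$ on $G$ splits as $j_{\g(g)}^{1/2}(X)\cdot \det_{\g/\g(g)}^{1/2}(1-g^{-1}e^{-X})$ up to the positive normalization chosen in the statement; this is exactly the factor appearing on the left of \eqref{eq:kirillov-g}. Matching with \eqref{eq:kirillov-V-lambda} applied to the group $G(g)^o$ and the orbits $G(g)^o w\lambda$ (whose union is $\Ocal_\lambda^g$ by the displayed decomposition), one reads off
\[
\ll\beta(g,\lambda),e^{i\ll\cdot,X\rr}\rr
=\gamma_{\tilde g}\sum_{\overline w\in W_{G(g)}\backslash W_G}\tilde g^{w\lambda}\,\ll\beta_{G(g)^o w\lambda},e^{i\ll\cdot,X\rr}\rr,
\]
where $\gamma_{\tilde g}$ collects the unimodular constants (the sign from the square-root normalization and the phase from $\det_{\g/\g(g)}^{1/2}(1-g^{-1})>0$ versus the Weyl-denominator normalization). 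Since a measure on the compact set $\Ocal_\lambda^g$ is determined by its Fourier transform, this gives the first bullet, and $|\gamma_{\tilde g}|=1$ because all the ingredients are unimodular.

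For the second bullet I would simply apply $\Rcal_{\g(g)}$ to the identity just proved and use \eqref{eq:Har} for the group $G(g)^o$: for $\mu\in A_{G(g)}$ one has $\Rcal_{\g(g)}(\beta_{G(g)^o\mu})=\sum_{w'\in W_{G(g)}}\epsilon(w')\delta_{w'\mu}$. Summing over coset representatives $\overline w$ and reassembling $W_{G(g)}\times(W_{G(g)}\backslash W_G)\cong W_G$, the signs multiply to give $\epsilon(w)$ and the weights $\tilde g^{w\lambda}$ are constant along $W_{G(g)}$-orbits only up to the discrepancy between $w\lambda$ and $w'w\lambda$ — here one uses that $\tilde g\in\tilde T$ acts by a fixed unitary character, so $\tilde g^{w'w\lambda}$ genuinely varies and must be kept inside the sum, yielding
\[
\Rcal_{\g(g)}\!\left(\beta(g,\lambda)\right)=\gamma_{\tilde g}\sum_{w\in W_G}\epsilon(w)\,\tilde g^{w\lambda}\,\delta_{w\lambda}.
\]

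The main obstacle I anticipate is bookkeeping of the normalizing constants and signs: matching the square-root conventions in $D_g(M)$, in $\det_{\g/\g(g)}^{1/2}(1-g^{-1}e^{-X})$, and in the Weyl denominator of $G(g)^o$, and checking that the leftover phase $\gamma_{\tilde g}$ is well defined (independent of the choices of positive roots for $G(g)^o$ and of coset representatives $\overline w$) and depends only on $\tilde g$. Once the factorizations of the Weyl numerator and denominator along $G(g)^o\subset G$ are set up carefully, the rest is a direct comparison of Fourier transforms; the genuine content is the uniqueness of $\beta(g,\lambda)$ from \eqref{eq:kirillov-g} and the functoriality of $\Rcal$ under passing to $G(g)^o$.
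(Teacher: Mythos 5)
Your overall strategy matches what the paper intends: the paper's proof is the one-line ``the first point follows from the character formula and the second is a direct consequence of the first,'' and your plan (compute the left-hand side of \eqref{eq:kirillov-g} via the Weyl character formula, split the alternating sum over $W_{G(g)}\backslash W_G$, factor the denominator as $j_{\g(g)}^{1/2}\cdot\det_{\g/\g(g)}^{1/2}(1-g^{-1}e^{-X})$, match with \eqref{eq:kirillov-V-lambda} for $G(g)^o$, then apply $\Rcal_{\g(g)}$ together with \eqref{eq:Har} to get the second bullet) is exactly the intended argument.

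However, there is a genuine gap in the way you pass from the first bullet to the second. When you apply $\Rcal_{\g(g)}$ and use \eqref{eq:Har} for $G(g)^o$ (with a coset representative $w$ chosen so that $w\lambda$ is $G(g)^o$-dominant), the term attached to the delta $\delta_{w'w\lambda}$ carries the phase $\tilde g^{w\lambda}$, not $\tilde g^{w'w\lambda}$: the phase comes along with the orbit $G(g)^o w\lambda$ and so depends only on the coset, not on $w'$. For the reassembled sum to have coefficient $\tilde g^{v\lambda}$ in front of $\delta_{v\lambda}$ for every $v\in W_G$, you therefore \emph{need} the equality $\tilde g^{w'\mu}=\tilde g^{\mu}$ for all $w'\in W_{G(g)}$ and $\mu\in\tilde\Lambda$. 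You assert instead that ``$\tilde g^{w'w\lambda}$ genuinely varies and must be kept inside the sum,'' which is precisely the opposite of what is true, and if it were true the second bullet would not follow from the first by this route. The correct justification is short but not automatic: a reflection $s_\alpha\in W_{G(g)}$ is in a root $\alpha$ of $G(g)^o$, so $g^\alpha=1$; since $\alpha\in\Lambda\subset\tilde\Lambda$, the character $\tilde t\mapsto\tilde t^\alpha$ of $\tilde T$ factors through $T$, whence $\tilde g^\alpha=g^\alpha=1$; therefore $\tilde g^{s_\alpha\mu}=\tilde g^{\mu}\,(\tilde g^{\alpha})^{-\langle\mu,\alpha^\vee\rangle}=\tilde g^{\mu}$, and by generation $\tilde g^{w'\mu}=\tilde g^{\mu}$ for all $w'\in W_{G(g)}$. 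This $W_{G(g)}$-invariance of $\mu\mapsto\tilde g^\mu$ is the key fact that makes the coefficient of $\delta_{v\lambda}$ come out as $\tilde g^{v\lambda}$, and at the same time makes the expression $\tilde g^{w\lambda}$ in the first bullet well defined as a function of the coset $\overline{w}$. It also streamlines your first bullet: after splitting $\sum_{v\in W_G}\epsilon(v)\,t^{v\lambda}$ into cosets, the factor $\tilde g^{w'w\lambda}$ appearing in the inner sum does not depend on $w'$, so it pulls out cleanly as $\tilde g^{w\lambda}$ times the Weyl numerator of $G(g)^o$ at $w\lambda$. Once you replace the incorrect claim by this invariance, the rest of your argument is the one the paper has in mind.
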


 \begin{proof}
The first point follows from the character formula and the second is a direct consequence of the first.
\end{proof}

\begin{definition}
We denote by $DH^{G(g)}(M^g,\Omega_g,V_g(n,k))\in\Dcal'(\g(g)^*)^{G(g)}$ the Fourier transform of the function
$\Ical_g(n,k)$ (see Lemma \ref{cruc-g}).
\end{definition}

Then for a test function $\varphi$,
$\ll DH^{G(g)}(M^g,\Omega_g, V_g(n,k)),\varphi\rr$ is of the form
$\sum_a u_{a}^k \ll s_{a,n},\varphi\rr$ with $\ll s_{a,n},\varphi\rr=
\int_{M^g_a} \int_{\g(g)}e^{-i\Omega_g(Y)}\alpha_{a,n}(Y)\widehat{\varphi}(Y)dY$ : here $\alpha_{a,n}(Y)$ is an equivariant form
on $M^g _a$ of degree $2n$, and $u_a$ is the restriction of $g^{\Lcal}$ to $M^g_a$.

Asymptotic expansion of distributions of the form
$\sum_a \sum_{n=0}^{\infty} u_a^k k^{-n}h_{n,a}$ are thus well defined and
the coefficients $H(n,k)=\sum_a u_a^kh_{n,a}$ are uniquely determined.

With the same proof than Theorem \ref{asympt1}, we obtain the following theorem.
\begin{theorem}
Let $M$ be a compact spin even dimensional oriented manifold with a $G$ equivariant line bundle $\CL$.
Let $g\in G$ of finite order. When $k$ tends to $\infty$, we have the asymptotic
equivalence\footnote{The term $k^{\frac{\dim M^g}{2}}DH^{G(g)}(M^g,\Omega_g, V_g(n,k))$ must be understood as the sum
$\sum_a k^{\frac{\dim M^g_a}{2}}DH^{G(g)}(M^g_a,\Omega_g, V_g(n,k))$.}
$$\res_{\g(g)^*}(k)\Big(\sum_{\lambda\in \Lambda} m(\lambda,k) \beta(g,\lambda)\Big)\equiv$$
 $$ j_{\g(g)}^{\frac{1}{2}}(i\partial/k)\det{}_{\g/\g(g)}^{\frac{1}{2}}(1-g^{-1} e^{-i\partial/k})
 \sum_{n=0}^{\infty} \frac{1}{k^{n}}  k^{\frac{\dim M^g}{2}}DH^{G(g)}(M^g,\Omega_g, V_g(n,k)).
 $$
\end{theorem}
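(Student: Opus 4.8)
The plan is to mimic the proof of Theorem \ref{asympt1}, replacing the delocalized index formula (\ref{eq:BV}) centered at the identity by the delocalized fixed-point formula (\ref{eq:QS-g-exp-X}) centered at $g$, and Lemma \ref{crucial} by Lemma \ref{cruc-g}. The left-hand distribution in question is, by Lemma \ref{lem:Harbeta}, nothing but $\Fcal_{\g(g)}\circ\res_{\g(g)}(k)$ applied to the generalized function
$$
X\longmapsto \Big(\sum_{\lambda\in\Lambda} m(\lambda,k)\chi_\lambda(g e^{X/k})\Big) j_{\g(g)}^{1/2}(X/k)\,\det{}_{\g/\g(g)}^{1/2}(1-g^{-1}e^{-X/k}),
$$
since $\sum_\lambda m(\lambda,k)\chi_\lambda = \QS_G(M,\CL^k)$ and the extra Jacobian factors are precisely those appearing in the Kirillov-type formula (\ref{eq:kirillov-g}). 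So the point is to understand the asymptotics of $\QS_G(M,\CL^k)(g e^{X/k})$ for $X\in\g(g)$.

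First I would fix, exactly as in the proof of Theorem \ref{asympt1}, a $G(g)$-invariant cutoff $h$ on $\g(g)$ equal to $1$ near $0$ and supported where the equivariant $\widehat A$ class of $M^g$ and the forms $R_{\Ncal^g}$ are convergent, and split $\ll\cdot,\varphi\rr = \mathrm I_k + \mathrm J_k$. The tail term $\mathrm J_k = O(k^{-\infty})$ by the polynomial bound of Lemma \ref{eq:majoration-QS-L-k} on $|\QS_G(M,\CL^k)(g e^{X/k})|$ (valid for all group elements, hence for $g e^{X/k}$) together with the rapid decrease of $\widehat\varphi$; note $j_{\g(g)}^{1/2}$ and $\det_{\g/\g(g)}^{1/2}(1-g^{-1}e^{-\cdot})$ have at most polynomial growth on the support of $h(X/k)$, so they do not spoil this estimate. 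For the main term $\mathrm I_k$ I would insert formula (\ref{eq:QS-g-exp-X}), giving an integral over $M^g$ of $e^{-ik\Omega_g(X/k)}$ against $h(X/k)$ times the equivariant form $V_g(\CL,k)(X/k)\,j_{\g(g)}^{1/2}(X/k)\det_{\g/\g(g)}^{1/2}(1-g^{-1}e^{-X/k})$.

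Next, as in Theorem \ref{asympt1}, write $b_x(X,k)=(g^{\Lcal})(x)^k e^{-ik\Omega_{g,x}} e^{i\langle\phi_G(x),X\rangle}$, a polynomial in $k$ whose coefficients are bounded in $X$ (the $(g^{\Lcal})^k$ factor has modulus $1$), and let $\tau_x(X)$ be the remaining analytic factor $h(X)\cdot[\widehat A(M^g)/D_g(M)]_x(X)\cdot j_{\g(g)}^{1/2}(X)\cdot\det_{\g/\g(g)}^{1/2}(1-g^{-1}e^{-X})$, compactly supported with all derivatives of polynomial growth. Apply the vector-valued form (\ref{eq:asympbtau}) of Proposition \ref{prop:technical-asymptotics}, fiberwise over the compact $M^g$, with uniform control of the remainder; this replaces $\tau_x(X/k)$ by its Taylor series. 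Extracting the top exterior degree as in Lemma \ref{lem:gradedstupidcompact}, the $e^{-ik\Omega_g(X/k)}$ reassembles into $e^{-i\Omega_g(X)}$, and $b_x\cdot(\text{Taylor series})$ collapses to $k^{\dim M^g_a/2}$ times $e^{-i\Omega_g(X)}$ applied to the Taylor series of the full analytic factor. The Taylor coefficients of the analytic factor are, by definition of $V_g(n,k)$ and of the operators $j_{\g(g)}^{1/2}(i\partial/k)$ and $\det_{\g/\g(g)}^{1/2}(1-g^{-1}e^{-i\partial/k})$, exactly what produces the right-hand side: the $\widehat A(M^g)/D_g(M)$ part gives the forms $V_g(n,k)$ whose Fourier transforms are the distributions $DH^{G(g)}(M^g,\Omega_g,V_g(n,k))$, while the two Jacobian factors $j_{\g(g)}^{1/2}$ and $\det_{\g/\g(g)}^{1/2}(1-g^{-1}e^{-\cdot})$, expanded in Taylor series in $X$ and re-read through $\res$ and $\Fcal_{\g(g)}$, become precisely the constant-coefficient differential operators $j_{\g(g)}^{1/2}(i\partial/k)$ and $\det_{\g/\g(g)}^{1/2}(1-g^{-1}e^{-i\partial/k})$ acting on the Laurent series (cf.\ the discussion following Proposition \ref{prop:technical-asymptotics} and the relation $\Fcal_\g(g_n)$ in Proposition \ref{prop:technical-asymptotics}). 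Book-keeping the powers of $k$ then yields the claimed equivalence.

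The main obstacle I anticipate is purely organizational rather than analytic: keeping straight the three separate sources of $1/k$-expansions — the equivariant $\widehat A(M^g)/D_g(M)$ factor, the Duflo-type function $j_{\g(g)}^{1/2}$, and the determinant $\det_{\g/\g(g)}^{1/2}(1-g^{-1}e^{-\cdot})$ — and checking that their Taylor expansions, after the rescaling $X\mapsto X/k$ and Fourier transform, convert into the stated composition of differential operators acting on $\sum_n k^{-n}k^{\dim M^g/2}DH^{G(g)}(M^g,\Omega_g,V_g(n,k))$, with the $u_a^k$ phases and the locally constant sign $c_g$ tracked correctly on each component $M^g_a$. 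One must also be a little careful that $\det_{\g/\g(g)}^{1/2}(1-g^{-1}e^{-X})$, unlike $j_{\g(g)}^{1/2}$, is not constant-equal-to-$1$ at $X=0$ but equals the positive constant $\det_{\g/\g(g)}^{1/2}(1-g^{-1})$, which is harmless but contributes to the leading coefficient; and that $g$ of finite order guarantees the $u_a$ are roots of unity so that the periodic-in-$k$ formalism of Proposition \ref{prop:asymptotic-quasi-polynomial} applies verbatim. The uniform remainder estimate in the fiberwise application of Proposition \ref{prop:technical-asymptotics} over the compact $M^g$ is identical to the one already carried out in the proof of Theorem \ref{asympt1}, so no new analytic input is needed.
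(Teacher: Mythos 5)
Your proposal is correct and follows exactly the route the paper intends: the paper itself states that the $g$-localized theorem is obtained ``with the same proof than Theorem \ref{asympt1},'' and your plan substitutes (\ref{eq:QS-g-exp-X}) for (\ref{eq:BV}), Lemma \ref{cruc-g} for Lemma \ref{crucial}, the Kirillov-type identity (\ref{eq:kirillov-g}) for (\ref{eq:kirillov-V-lambda}), and tracks the $(g^{\Lcal})^k$ phases and the constant $c_g$ component by component, which is precisely the bookkeeping the paper leaves implicit. The observations you flag as the main obstacles (the three sources of $1/k$-expansion, the fact that $\det_{\g/\g(g)}^{1/2}(1-g^{-1}e^{-X})$ does not equal $1$ at $X=0$, and the role of finite order in making the $u_a^k$ periodic) are exactly the points one must keep straight, and you handle them correctly.
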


\section{Spin quantization in the non-compact setting}

In this section we work with an even dimensional oriented spin manifold $M$ of dimension $2d$ provided with an action of $G$.
We do not assume that $M$ is compact. Let $(\CL,\nabla)$ be
a $G$ equivariant line bundle equipped with a $G$-invariant Hermitian connection
$\nabla$.

We assume that the moment map $\phi_G:M\to \g^*$ defined by the Kostant relation (\ref{eq:kostant-rel}) is a proper map.
In the next section
we explain how is defined the formal geometric quantization of the data $(M,\phi_G,\Lcal)$.

Notice that the twisted Duistermaat-Heckman distributions \break $DH^G(M,\Omega,\nu)\in\Dcal'(\g^*)^G$ are
still defined by the relation
\begin{equation}\label{eq:DH-noncompact}
\ll DH^G(M,\Omega,\nu),\varphi\rr =\frac{1}{(-2i\pi)^{d}}\int_{M} e^{-i\Omega}[\nu(-i\partial)\varphi](\phi_G(m)).
\end{equation}
Here $\nu$ is a closed equivariant form on $M$ with polynomial coefficients and $\varphi$ is a test function on $\g^*$.
The integral in (\ref{eq:DH-noncompact}) is well-defined since the differential form
$e^{-i\Omega}[\nu(-i\partial)\varphi](\phi_G(m))$ has a compact support. The distribution $DH^G(M,\Omega,\nu)$ depends 
only of the class defined by $\nu$ in $H^*_G(M)$.

\subsection{Formal geometric quantization: definition}

The invariant scalar product on $\kgot^*$  provides an identification
$\kgot\simeq\kgot^*$.

\begin{defi}\label{defi:kir}
$\bullet$ The {\em Kirwan vector field} associated to $\phi_G$ is defined by
\begin{equation}\label{eq-kappa}
    \kappa_{G}(m)= -\phi_G(m)\cdot m, \quad m\in M.
\end{equation}

$\bullet$ We denote by $Z_G$ the set of zeroes of $\kappa_{G}$. Thus $Z_G$ is a $G$-invariant closed subset of $M$.
\end{defi}

The set $Z_G$, which is not necessarily smooth, admits the following description. Consider the closed Weyl chamber
$\mathfrak{t}^*_{\geq 0}$.
We see that
\begin{equation}\label{eq=Z-Phi-beta}
Z_G=\coprod_{\gamma\in\Bcal_G} Z_\gamma
\end{equation}
where $Z_\gamma$ corresponds to the compact set $G(M^\gamma\cap\phi_G^{-1}(\gamma))$, and $\Bcal_G=\phi_G(Z_G)\cap \tgot^*_{\geq 0}$.
The properness of $\phi_G$ insures that, for any compact subset $C\subset \tgot^*$, the intersection $\Bcal_G\cap C$ is finite.
Here $M^{\gamma}$ is the set of zeroes of the vector field on $M$ defined by the infinitesimal action of $\gamma$.

Let $\Scal\to M$ be the spinor bundle on $M$. The principal symbol of the Dirac operator $D_\Scal$ is the bundle map
$\sigma(M)\in \Gamma(\T^* M, \hom(\Scal^+,\Scal^-))$ defined by the Clifford action
$$\sigma(M)(m,\nu)=\clif_{m}(\tilde{\nu}): \Scal\vert_m^+\to \Scal\vert_m^-$$
where $\nu\in \T^* M\simeq \tilde{\nu}\in \T M$ is an identification associated to an invariant Riemannian metric on $M$.

\begin{defi}\label{def:pushed-sigma}
The symbol  $\sigma(M,\phi_G)$ shifted by the vector field $\kappa_{G}$ is the
symbol on $M$ defined by
$$
\sigma(M,\phi_G)(m,\nu)=\sigma(M)(m,\tilde{\nu}-\kappa_G(m))
$$
for any $(m,\nu)\in\T^* M$.
\end{defi}

For any $G$-invariant open subset $\Ucal\subset M$ such that $\Ucal\cap Z_G$ is compact in $M$, we see that the restriction
$\sigma(M,\phi_G)\vert_\Ucal$ is a transversally elliptic symbol on $\Ucal$. If $E\to M$ is a complex vector bundle, we denote by 
$D_{E,\phi_G}^\Ucal$ a pseudo-differential operator whose principal symbol defines the same class than $\sigma(M,\phi_G)\otimes E\vert_\Ucal$ in the group of equivariant $K$-theory of $\T^*_G\Ucal$ (see \cite{Atiyah74}).

Thus we can define the following localized equivariant indices (see \cite{Atiyah74,pep-vergne:witten}).

\begin{defi}\label{def:indice-localise}
$\bullet$ A closed invariant subset $Z\subset Z_G$ is called a {\em component} of $Z_G$ if it is a union of connected components of $Z_G$.

$\bullet$ If $Z$ is a {\em compact component} of $Z_G$, and $E\to M$ is an equivariant vector bundle, we denote by
$$
\QS_G(M,E,Z)\ \in\ \hat{R}(K)
$$
the equivariant index of the transversally elliptic operator $D_{E,\phi_G}^\Ucal$. Here $\Ucal$ is an invariant neighborhood of $Z$
so that $\Ucal\cap Z_G=Z$.
\end{defi}

By definition, $Z=\emptyset$ is a component of $Z_\Scal$ and $\QS_G(M,\Scal,\emptyset)=0$. For any $\gamma\in\Bcal_\Scal$, $Z_\gamma$ is a
compact component of  $Z_\Scal$.

When the manifold $M$ is compact, the set $\Bcal_G$ is finite and we have the decomposition
$\QS_G(M,\Lcal^k )=\sum_{\gamma\in\Bcal_G}\QS_G(M,\Lcal^k ,Z_\gamma)$.

\begin{definition}
When the moment map $\phi_G$ is proper, we define the formal geometric quantization of the data $(M,\phi_G,\Lcal^k)$ as
\begin{equation}\label{eq:formal-def}
\qfor_G(M,\Lcal^k):=\sum_{\gamma\in\Bcal_G}\QS_G(M,\Lcal^k,Z_\gamma).
\end{equation}
The sum of the right hand side is not necessarily finite but it converges in $\hat{R}(G)$
(see \cite{pep:formal2,MZ14,Hochs-Song:duke,pep:formal3}).
\end{definition}

In the following example, for any $\lambda\in\Z$, we denote by $\C_{[\lambda]}$ the vector space $\C$ with the action of
$S^1$ : $t\cdot z= t^\lambda z$, for $(t,z)\in  S^1\times \C_{[\lambda]}$.

\begin{example}\label{example:C}
Consider the $S^1$-manifold $M=\C_{[2]}$. The $S^1$-equivariant spinor bundle on
$M$ is $\Scal=M\times \left(\C_{[-1]}\oplus \C_{[1]}\right)$. 

Fix $a\in\N$.
Consider the equivariant line bundle $\Lcal(a)=M\times \C_{[a]}$ with connection $\nabla=d-\frac{i}{2} Im (zd\bar{z})$.
The two-form is $\Omega^a=\frac{1}{2} Im (dzd\bar{z})$, the moment map $\phi^a_{S^1}(z)=a+|z|^2$ is proper, and the corresponding critical set $Z^a_{S^1}$ is reduced to $\{0\}\subset M$.

A small computation shows that $\qfor_{S^1}(M,\Lcal(a)^k)=\QS_{S^1}(M,\Lcal(a)^k,\{0\})$ is equal to
$\C_{[ka]}\otimes\sum_{j\geq 0}\C_{[2j+1]}$ (see \cite{pep-vergne:witten}).
\end{example}

Let $m_G(\lambda,k)$ be the multiplicity of $V_\lambda$ in $\qfor_G(M,\Lcal^k)$. In other words,
$$
\qfor_G(M,\Lcal^k)=\sum_{\lambda\in A_G}m_G(\lambda,k)\,V_\lambda.
$$
In this context, the multiplicities $m_G(\lambda,k)$ have still an interpretation in terms of reduced spaces  (see \cite{Hochs-Song:duke}).
Hence, when the generic infinitesimal stabilizer is abelian, Theorem \ref{theo:QR-multiplicities} still holds, so $m_G(\lambda,k)\neq 0$ only if
$\lambda/k\in \phi_G(M)$.

 As in the previous section, we are interested in the asymptotic behavior of the following family of distributions
$$
\Theta^{(M,\CL)}_k := k^r\sum_{\lambda\in A_G} m_G(\lambda,k)\beta_{\lambda/k},
$$
where $r=\dim (G/T)/2$. For any test function $\varphi$ with compact support, the identity
$$
\ll\Theta^{(M,\CL)}_k,\varphi\rr := k^r\sum_{\lambda\in A_G} m_G(\lambda,k)\ll \beta_{\lambda/k},\varphi\rr
$$
is well defined since there exists only a finite number of terms such that $\ll \beta_{\lambda/k},\varphi\rr\neq 0$.

 Let us recall that we can associate the twisted Duistermaat-Heckman
 distribution $DH^G(M,\Omega, \Ahat_n(M))\in\Dcal'(\g^*)^G$
 to the equivariant form $\Ahat_n(M)$ for any $n\geq 0$ (see (\ref{eq:DH-noncompact})).

The 	aim of  this section is to prove the following extension of Theorem \ref{asympt1}.

\begin{theorem}\label{asympt1-noncompact}
Let $M$ be a spin even dimensional oriented manifold with a $G$-equivariant line bundle $(\CL,\nabla)$.
Suppose that the moment map $\phi_G$ is proper. When $k$ tends to $\infty$, we have the asymptotic expansion
\begin{equation}\label{eq:asymptotics-theta-k-noncompact}
\Theta^{(M,\Lcal)}_k \equiv \  j_\g^{1/2}(i\partial/k)\Big(k^d\sum_{n=0}^{\infty}
   \frac{1}{k^n} DH^G(M,\Omega, \Ahat_n(M))\Big) .
\end{equation}
\end{theorem}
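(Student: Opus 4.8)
The plan is to reduce the non-compact statement to the compact computation of Theorem \ref{asympt1} by localizing near the compact pieces $Z_\gamma$ of the critical set $Z_G$, and then to assemble the contributions. First I would invoke the quasi-polynomiality of the multiplicities: by Theorem \ref{theo:m-G-piecewise} (which, as noted after Example \ref{example:C}, continues to hold in the non-compact proper setting because $m_G(\lambda,k)$ still has a reduced-space interpretation via \cite{Hochs-Song:duke}), the function $m_G$ lies in $\Scal(\tilde\Lambda)$, so $\Theta^{(M,\CL)}_k=\Theta_k(m_G)$ in the notation of Section \ref{sec:asymptotics}. Hence, by Proposition \ref{prop:asymptotic-quasi-polynomial}, the family $\Theta^{(M,\CL)}_k$ \emph{automatically} admits \emph{some} asymptotic expansion ${\rm AS}_k(m_G)=\sum_n k^{-n}\theta_n(k)$ with coefficients depending periodically on $k$. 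The content of the theorem is therefore to \emph{identify} this expansion with the right-hand side of \eqref{eq:asymptotics-theta-k-noncompact}.

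Next I would use the delocalized index formula for transversally elliptic operators (the one attributed to \cite{pep-vergne:bismut} in the introduction) to get a replacement for \eqref{eq:BV}: for $X\in\g$ small, $\qfor_G(M,\CL^k)(\exp X)$ is an integral over a compact neighbourhood of $Z_G$ of $e^{-ik\Omega(X)}\,\Ahat(M)(X)\,P(X)$, where $P(X)$ is an equivariant form with generalized coefficients, compactly supported near $Z_G$, and cohomologous to $1$ in equivariant cohomology without support conditions. The crucial point is that the factor $P(X)$ is \emph{independent of $k$} and its presence does not affect the twisted Duistermaat–Heckman distributions, precisely because $DH^G(M,\Omega,\nu)$ depends only on the class $[\nu]\in H^*_G(M)$ (stated just after \eqref{eq:DH-noncompact}) and $P\equiv 1$ there. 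I would then repeat the argument of Theorem \ref{asympt1} verbatim: split the test-function pairing as $\mathrm I_k+\mathrm J_k$ using a cutoff $h$ supported where the delocalized formula is valid; bound $\mathrm J_k=O(k^{-\infty})$ using Lemma \ref{eq:majoration-QS-L-k} (which must be re-established in the proper setting from the convergence of $\qfor_G$ in $\hat R(G)$, or from the quasi-polynomial bound on $m_G$); and for $\mathrm I_k$ apply the rescaling $X\mapsto X/k$, the graded identity of Lemma \ref{crucial} / Lemma \ref{lem:gradedstupidcompact}, and the uniform Taylor-expansion estimate of Proposition \ref{prop:technical-asymptotics} (in its parametrized vector-valued form \eqref{eq:asympbtau}). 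This produces the expansion $\Psi^{(M,\CL)}_k\equiv k^d\sum_n k^{-n}DH^G(M,\Omega,\Ahat_n(M))$, and multiplying by $j_\g^{1/2}(X/k)$ — i.e.\ convolving by $B^k_{j}$ with $j=j_\g^{1/2}$, whose Fourier transform is compactly supported — and applying Proposition \ref{prop:asymptotic-quasi-polynomial-star-B} gives \eqref{eq:asymptotics-theta-k-noncompact}.

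There is, however, a gap in the naive repetition: when $M$ is non-compact the integral $\mathrm I_k$ is over a fixed compact set (a neighbourhood of $Z_G$), but the equivariant form $P(X)$ has only \emph{generalized} coefficients, so the integrand is a distribution in $X$ rather than a smooth function, and Proposition \ref{prop:technical-asymptotics} does not apply literally. The way around this is to pair first with $\widehat\varphi(X)$ and the cutoff $h(X/k)$, integrate in $X$ to get a smooth compactly supported density on the neighbourhood of $Z_G$, and only then expand; alternatively, one decomposes $P$ over the pieces $Z_\gamma$ and on each piece uses the localization model (as in Example \ref{example:C}) where the transversally elliptic index is computed by an explicit Atiyah–Bott–type sum over fixed points, reducing each local contribution to a genuinely smooth integral to which Proposition \ref{prop:technical-asymptotics} applies with uniform remainder. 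I expect \textbf{this analytic control of the localized contributions near the (possibly singular) $Z_\gamma$ — in particular getting a remainder estimate uniform in $k$ despite the generalized coefficients of $P$} — to be the main obstacle; everything else is a transcription of the compact proof combined with the quasi-polynomiality machinery of Section \ref{sec:quasi-polynomial-multiplicity} and the companion article \cite{pep-vergne:asymptotics}. Finally, one checks that the expansion obtained this way coincides with ${\rm AS}_k(m_G)$, which is immediate since asymptotic expansions are unique, so the periodic coefficients $\theta_n(k)$ are in fact constant in $k$ and equal to $j_\g^{1/2}(i\partial)$ applied termwise to the $DH^G(M,\Omega,\Ahat_n(M))$.
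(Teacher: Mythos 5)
Your overall architecture matches the paper's: decompose the generalized character over the compact pieces $Z_\gamma$ of the critical set, replace the delocalized formula~\eqref{eq:BV} by its transversally elliptic analogue involving the compactly supported equivariant form with generalized coefficients $P_\gamma$ (Proposition~\ref{pro:ParVER}), split the pairing into $\mathrm{I}_k+\mathrm{J}_k$, kill $\mathrm J_k$ using polynomial growth of multiplicities and a rapid-decay estimate, rescale and Taylor-expand $\mathrm I_k$, and finally reassemble using $\sum_\gamma P_\gamma\equiv 1$ in equivariant cohomology (Equation~\eqref{eq:pepnonob}). So you have the right skeleton.

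However, you explicitly flag the central analytic step --- the remainder estimate for $\mathrm I_k$ when the integrand carries the generalized coefficients of $P_\gamma$ --- as ``the main obstacle'' and then only gesture at two possible ways around it (pair with $\widehat\varphi$ first; or an Atiyah--Bott fixed-point reduction). Neither of these is carried out, and the second is not what happens: there is no fixed-point reduction here. The paper resolves the issue concretely by exploiting the \emph{explicit} form of $P_\gamma$: one writes
\[
P_\gamma(X)=\chi+\sum_{j=1}^{d}\omega_j\int_{0}^{\infty}t^{j-1}e^{it\langle\Phi_\theta,X\rangle}\,dt,
\]
so that after rescaling $E(X,k)=P_\gamma(X/k)$ is \emph{polynomial in $k$} with a one-dimensional auxiliary $t$-integral. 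Freezing $x\in M$ and splitting $E_x=\chi(x)+R_x$, one reduces to estimating integrals of the form $\int_0^\infty\!\int_\g e^{i\langle\xi+t\zeta,X\rangle}P(t)\,\tau_{>N}(X/k)\,\widehat\varphi(X)\,dX\,dt$ with $\zeta=\Phi_\theta(x)\neq0$ on $\supp(d\chi)$. The crucial Lemma~\ref{prop:majorationschiantes} supplies a bound of size $k^{-N-1}(1+\|\xi+t\zeta\|^2)^{-R}$, which is \emph{integrable in $t$} because $\zeta\neq0$. This is the mechanism that converts the generalized coefficients into a controllable remainder, uniformly in $x$ over the (compact) support of $\chi$. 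Without this, the proposal is not a proof: you have correctly diagnosed where the difficulty lies but have not dealt with it.

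Two smaller points. First, the opening appeal to Theorem~\ref{theo:m-G-piecewise} and Proposition~\ref{prop:asymptotic-quasi-polynomial} to get existence of \emph{some} expansion is an unnecessary detour and in fact imports a hypothesis not present in Theorem~\ref{asympt1-noncompact} (abelian generic infinitesimal stabilizers); the paper's proof needs only at-most-polynomial growth of $m_\gamma(\lambda,k)$, which it gets from~\cite{vergne:Izvestiya}. Second, folding in the Duflo factor via a convolution and Proposition~\ref{prop:asymptotic-quasi-polynomial-star-B} also adds an extra layer: in the paper $j_\g^{1/2}(X/k)$ is simply absorbed into the smooth function $\tau$ being Taylor-expanded, which is both simpler and avoids invoking quasi-polynomiality of $m_G$ in a theorem that does not assume the hypothesis needed for it.
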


\medskip

\begin{example}\label{example:C-bis}
We continue Example \ref{example:C} with $M=\C_{[2]}$ and $\Lcal(a)= M\times \C_{[a]}$. Then
$\Theta^{(M,\CL(a))}_k= \sum_{j\geq 0}\delta_{a+(2j+1)/k}$. The equivariant $\hat{A}$-class is
$\widehat{A}(M)(X)= \frac{X}{\sin(X)}$ for $X\in {\rm Lie}(S^1)\simeq \R$. Identity
(\ref{eq:asymptotics-theta-k-noncompact}) says that
$$
\Theta^{(M,\CL(a))}_k\equiv \frac{k}{2}\frac{i\partial/k}{\sin(i\partial/k)} 1_{[a,\infty[}.
$$
This is the formula given in Example \ref{ex:cohen}.
\end{example}

With the same proof than Theorem  \ref{asympt1-noncompact}, we obtain the following theorem.
\begin{theorem}\label{asympt1-noncompact-g}
Let $M$ be a spin even dimensional oriented manifold with a $G$-equivariant line bundle $\CL$.
Suppose that the moment map $\phi_G$ is proper. Let $g\in G$ of finite order. When $k$ tends to $\infty$, we have the asymptotic
equivalence
$$\res_{\g(g)^*}(k)\Big(\sum_{\lambda\in \Lambda} m(\lambda,k) \beta(g,\lambda)\Big)\equiv$$
 $$ j_{\g(g)}^{\frac{1}{2}}(i\partial/k)\det{}_{\g/\g(g)}^{\frac{1}{2}}(1-g^{-1} e^{-i\partial/k})
 \sum_{n=0}^{\infty} \frac{1}{k^{n}}  k^{\frac{\dim M^g}{2}}DH^{G(g)}(M^g,\Omega_g, V_g(n,k)).
 $$

\end{theorem}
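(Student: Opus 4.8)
The plan is to run the argument of Theorem \ref{asympt1-noncompact} verbatim, but localized at $g$ rather than at the identity. The starting point is the fact, already recorded before the statement, that the multiplicity function $m(\lambda,k)$ is piecewise quasi-polynomial in the non-compact case (the $[Q,R]=0$ description of $m_G(\lambda,k)$ via reduced spaces still holds when $\phi_G$ is proper, by \cite{Hochs-Song:duke}, so $m_G\in\Scal(\tilde\Lambda)$ as in Theorem \ref{theo:m-G-piecewise}). Hence the family $\res_{\g(g)^*}(k)\big(\sum_{\lambda\in\Lambda} m(\lambda,k)\beta(g,\lambda)\big)$ is of the type covered by the quasi-polynomial machinery of Section \ref{sec:asymptotics}: by Lemma \ref{lem:Harbeta}, applying $\Rcal_{\g(g)}$ turns it into $\gamma_{\tilde g}\,\res_{\t^*}(k)\sum_{\lambda}m(\lambda,k)\sum_{w}\epsilon(w)\tilde g^{w\lambda}\delta_{w\lambda}$, i.e.\ a $W_{G(g)}$-anti-invariant combination of $\res(k)$ of Dirac masses weighted by a piecewise quasi-polynomial function twisted by the finite-order character $\lambda\mapsto\tilde g^\lambda$. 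So Proposition \ref{prop:asymptotic-quasi-polynomial} (in its form allowing periodic-in-$k$ coefficients, which is exactly what the twist by a root of unity produces) guarantees that the asymptotic expansion exists; it remains to identify its coefficients.

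To identify the coefficients I would compute the Fourier transform of the left-hand side and match it term by term with the Fourier transform of the right-hand side. First, reduce to a neighborhood of $0$ in $\g(g)$ using a cutoff $h$, exactly as in the proof of Theorem \ref{asympt1}: the contribution of $1-h(X/k)$ is $O(k^{-\infty})$ because $|\QS_G(M,\Lcal^k,Z_\gamma)|$-type bounds give at most polynomial growth of $\qfor_G(M,\Lcal^k)(g\exp X)$ after multiplication by the cutoff (here one uses that only finitely many $Z_\gamma$ meet the support of the test function, and the piecewise-quasi-polynomial bound on $m(\lambda,k)$, via the pairing $\sum_{\|\lambda\|/k\le R}|m(\lambda,k)|\vol(G\lambda)=O(k^{\dim})$). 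On the small ball, apply the delocalized fixed-point index formula \eqref{eq:QS-g-exp-X} for the non-compact manifold — this is where one needs the delocalized formula of \cite{pep-vergne:bismut} for $\qfor_G(M,\Lcal^k)(g\exp X)$: it has the same shape as \eqref{eq:QS-g-exp-X} with the Chern character $e^{-i\Omega_g(X)}$ replaced by $e^{-i\Omega_g(X)}P_g(X)$ for a compactly supported equivariant form $P_g$ cohomologous to $1$, supported near $Z_G\cap M^g$, so that the integral over $M^g$ is actually over a compact set. Then Lemma \ref{cruc-g} (its non-compact version) gives $\QS_G(M,\Lcal^k)(g\exp(X/k))=\sum_n k^{-n}k^{\dim M^g/2}\Ical_g(n,k)(X)$ on the small ball, and the homogeneity bookkeeping of Lemma \ref{lem:gradedstupidcompact} applies component by component on $M^g$.

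Now multiply by the Kirillov normalization factor coming from \eqref{eq:kirillov-g}: the Fourier transform of $\res_{\g(g)^*}(k)\sum_\lambda m(\lambda,k)\beta(g,\lambda)$ is, up to the $\res$, the function $X\mapsto \QS_G(M,\Lcal^k)(g\exp(X/k))\,j_{\g(g)}^{1/2}(X/k)\det_{\g/\g(g)}^{1/2}(1-g^{-1}e^{-X/k})$. Expanding $j_{\g(g)}^{1/2}$ and $\det_{\g/\g(g)}^{1/2}(1-g^{-1}e^{-\bullet})$ in Taylor series at $0$ and invoking Proposition \ref{prop:technical-asymptotics} (vector-valued version, with the uniform-rest statement needed to integrate over the compact piece of $M^g$) converts multiplication by these analytic functions of $X/k$ into application of the formal differential operators $j_{\g(g)}^{1/2}(i\partial/k)$ and $\det_{\g/\g(g)}^{1/2}(1-g^{-1}e^{-i\partial/k})$ to the $k$-series $\sum_n k^{-n}k^{\dim M^g/2}DH^{G(g)}(M^g,\Omega_g,V_g(n,k))$, which by definition is the Fourier transform of the right-hand side. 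Taking $\res_{\g(g)^*}(k)$ and Fourier-inverting yields the claimed equivalence; the two expansions match coefficient by coefficient because, by the quasi-polynomiality of $m(\lambda,k)$ together with the descent lemma from \cite{pep-vergne:asymptotics} applied after twisting by an arbitrary finite-order element of $T_H\subset\tilde T$, an asymptotic expansion of such a family is unique. The main obstacle is the non-compact input: making precise that the delocalized index formula with the cutoff form $P_g$ is legitimate for the transversally elliptic operator $D_{\Lcal,\phi_G}$ and that the resulting integrand over $M^g$ is genuinely compactly supported, so that all the "compact $M$'' steps (uniform bounds on Taylor remainders, interchange of $\int_{M^g}$ and $\int_{\g(g)}$) go through unchanged; everything else is bookkeeping already carried out in the proofs of Theorem \ref{asympt1} and Theorem \ref{asympt1-noncompact}.
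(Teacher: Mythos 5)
The paper gives no separate argument here, simply asserting ``With the same proof than Theorem \ref{asympt1-noncompact}, we obtain the following theorem,'' and your proposal correctly spells out precisely what that adaptation entails: the $\gamma$-decomposition, the cutoff $h$ with $\mathrm{J}_k=O(k^{-\infty})$, the $g$-localized delocalized formula of \cite{pep-vergne:bismut} with the compactly supported form $P_\gamma$, Lemma \ref{cruc-g} and the homogeneity bookkeeping on $M^g$, and the Taylor-series conversion of $j_{\g(g)}^{1/2}$ and $\det_{\g/\g(g)}^{1/2}(1-g^{-1}e^{-\cdot})$ into the formal differential operators. So your proof takes essentially the same approach the paper intends.
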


We prove Theorem \ref{asympt1-noncompact} in the next two subsections.

\subsection{Formal geometric quantization: delocalized formulas}

We consider the generalized character
$$
\QS_G(M,\Lcal^k,Z_\gamma)=\sum_{\lambda\in A_G}\,m_\gamma(\lambda,k)\,V_\lambda.
$$
Here $\QS_G(M,\Lcal^k,Z_\gamma)(g)=\sum_{\lambda\in A_G}\,m_\gamma(\lambda,k)\,\chi_\lambda(g)$
defines a distribution on $G$ because the multiplicity function $\lambda\mapsto m_\gamma(\lambda,k)$ has at most a polynomial growth.

The function  $m_\gamma(\lambda,k)$
is in fact with at most polynomial growth in both variables
$(\lambda,k)$.
This follows for example from the general multiplicity formula for transversally elliptic operators given in
\cite{vergne:Izvestiya}. In this article,
the multiplicity formula is given for a single general transversally elliptic symbol
$\sigma$   and multiplicities are obtained as values on $A_G$ of a certain piecewise quasi-polynomial function on $A_G$.
If we consider a family   $\sigma_k=\sigma \otimes \CL^k$,
the formula is locally piecewise polynomial on a certain finite number of affine
cones  in $\t^*_{\geq 0} \oplus \R$ intersected with $A_G\oplus \Z$.


We consider the
corresponding distribution on $\g$
$$
\QS_G(M,\Lcal^k,Z_\gamma)(e^X)=\sum_{\lambda\in A_G}\,m_\gamma(\lambda,k)\,\chi_\lambda(e^X).
$$
The distribution on $\ggot^*$ defined by
$$
\Theta^\gamma_k:=k^{r}\sum_{\lambda\in A_G}\,m_\gamma(\lambda,k)\, \beta_{\lambda/k}
$$
is the Fourier transform of $\QS_G(M,\Lcal^k,Z_\gamma)(e^{X/k})j_\ggot^{1/2}(X/k)$. It is tempered also because
the multiplicity function $\lambda\mapsto m_\gamma(\lambda,k)$ has at most a polynomial growth.

In order to give a formula for the asymptotic of $\Theta^\gamma_k$, we introduce an equivariant form $P_\gamma(X)$
with generalized coefficients on $M$ that is supported in a small neighborhood of $Z_\gamma$.

Take $\chi:M\to\R$ a $G$-invariant function equal to $1$ in a neighborhood of
$Z_\gamma$, compactly supported,  and such that ${\rm Support}(\chi)\cap Z_G= Z_\gamma$. Consider the invariant
$1$-form $\theta:=(\kappa_G,-)$ where $\kappa_G$ is the Kirwan vector field associated to $\phi_G$, and $(-,-)$ is an invariant Riemannian metric.

 We consider the equivariant form $D\theta(X)= d\theta-\langle\Phi_\theta,X\rangle$, and the equivariant form with generalized coefficients
$$
P_\gamma(X):=\chi+i\, (d\chi)\theta\int_{0}^\infty e^{-itD\theta(X)}dt.
$$

Thus, if $f(X)$ is in the Schwartz space of $\g$, the integral
$$
\int_\g  P_\gamma(X)f(X)dX:=\chi \hat{f}(0)+i\, (d\chi)\theta\int_{0}^\infty e^{-it d\theta} \hat{f}(t\Phi_\theta)dt
$$
is convergent.
Indeed $\ll \Phi_\theta,\phi_G\rr(m)=\|\kappa_G(m)\|^2$ and so $\Phi_\theta\neq 0$ on the support of $d\chi$.
It defines a differential form with compact support since it is equal to $0$ outside the support of $\chi$.
So if $f(X)$ is a function from $\g$ to differential forms on $M$
such that $X\to f(X)$ is rapidly decreasing, then we can define
$\int_M  \int_\g P_\gamma(X)f(X) dX$.
Let $\varphi$ be a test function on $\g^*$,
and $\nu(X)$  an equivariant form.
Then $e^{-i\Omega(X)}\nu(X)\widehat{\varphi}(X)$ is rapidly decreasing
if $\nu(X)$ is an equivariant form with polynomial coefficients.
So we define
 $DH^G(M,\Omega,\nu P_\gamma)\in \Dcal'(\ggot^*)^G$
 by
 $$
 \ll DH^G(M,\Omega,\nu P_\gamma),\varphi\rr=
\frac{1}{(-2i\pi)^{d}}\int_M \int_\g e^{-i\Omega(X)}\nu(X) P_\gamma(X){\widehat{\varphi}}(X)dX.
$$

An improved version of Witten non abelian localization theorem is the following formula
\begin{equation}\label{eq:pepnonob}
 DH^G(M,\Omega,\nu)=\sum_{\gamma\in \CB_G} DH^G(M,\Omega,\nu P_\gamma)
 \end{equation}
which is somehow a consequence of the fact that $\sum_{\gamma\in \CB_G} P_\gamma$ is equal to $1$ in cohomology
(see \cite{pep:topology}).

\medskip

We will prove the following theorem in the next section.
\begin{theo}\label{theo-asymptotics-beta}
We have the following relation
$$
\Theta^\gamma_k\equiv k^{d}\, j_{\ggot}^{1/2}(i\partial/k)\sum_{n=0}^{\infty} \frac{1}{k^n}DH^G(M,\Omega,\hat{A}_n(M) P_\gamma).
$$
\end{theo}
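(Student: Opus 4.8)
The plan is to reduce Theorem~\ref{theo-asymptotics-beta} to the compact-manifold computation of Theorem~\ref{asympt1} by means of the delocalized index formula for transversally elliptic operators localized near $Z_\gamma$. First I would invoke the delocalized formula of \cite{pep-vergne:bismut} for the generalized function $\QS_G(M,\Lcal^k,Z_\gamma)(\exp X)$: for $X\in\g$ small enough it is expressed as an integral over $M$ of $e^{-ik\Omega(X)}\widehat A(M)(X)$ against the equivariant form with generalized coefficients $P_\gamma(X)$ built from the Kirwan vector field, exactly as in the discussion following the statement of Theorem~\ref{theo:intro}. Concretely, $\QS_G(M,\Lcal^k,Z_\gamma)(\exp X)=\frac{1}{(-2i\pi)^d}\int_M e^{-ik\Omega(X)}\widehat A(M)(X)\,P_\gamma(X)$ for $\|X\|<r_M$. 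The point of $P_\gamma$ is that it has compact support in a neighborhood $\Ucal$ of $Z_\gamma$ with $\Ucal\cap Z_G=Z_\gamma$, so all integrals are over a compact region even though $M$ is not compact.

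Next I would run the scaling argument of Lemma~\ref{crucial} in this localized setting. Replacing $\CL$ by $\CL^k$ and $X$ by $X/k$, write $\widehat A(M)(X/k)=\sum_n\tau_n(X/k)$ for the Taylor series of the $\widehat A$ class, and $b(X,k)=e^{-ik\Omega(X/k)}=e^{-ik\Omega}e^{i\langle\phi_G,X\rangle}$, which depends polynomially on $k$ with coefficients bounded in $X$ (being multiples of $e^{i\langle\phi_G,X\rangle}$). The extra factor $P_\gamma(X)$ is an equivariant form with generalized coefficients, but because it is supported in a fixed compact set and, after pairing against a Schwartz function, produces a genuine compactly supported differential form, it can be carried through the asymptotic expansion just as the bounded factor $b$ is: it is held fixed while $\tau$ is replaced by its Taylor series. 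This is precisely the content of the ``informal'' replacement rule \eqref{eq:informal} and of Proposition~\ref{prop:technical-asymptotics} in its vector-valued, parameter-dependent form; the uniform-in-parameter version of that proposition, whose proof is spelled out in the excerpt, is exactly what is needed here because $x\mapsto P_\gamma(X)|_x$ and the remainder estimates must be controlled uniformly over the compact support of $\chi$. Applying Lemma~\ref{lem:gradedstupidcompact} degree by degree then gives
$$
\QS_G(M,\Lcal^k,Z_\gamma)(\exp(X/k))j_\g^{1/2}(X/k)\ \equiv\ k^d\sum_{n=0}^\infty\frac{1}{k^n}\,j_\g^{1/2}(X/k)\,\frac{1}{(-2i\pi)^d}\int_M e^{-i\Omega(X)}\widehat A_n(M)(X)\,P_\gamma(X),
$$
and taking Fourier transforms (using that $j_\g^{1/2}$ is analytic with compactly supported Fourier transform, so that multiplication by $j_\g^{1/2}(X/k)$ becomes the operator $j_\g^{1/2}(i\partial/k)$ via Proposition~\ref{prop:asymptotic-quasi-polynomial-star-B}) yields the claimed expansion
$$
\Theta^\gamma_k\ \equiv\ k^d\,j_\g^{1/2}(i\partial/k)\sum_{n=0}^\infty\frac{1}{k^n}DH^G(M,\Omega,\widehat A_n(M)P_\gamma).
$$

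The step I expect to be the main obstacle is the rigorous control of the ``small $X$'' restriction together with the generalized-coefficient factor $P_\gamma$. In the compact case one disposes of the region $\|X\|\ge rk$ using the polynomial bound $|\QS_G(M,\CL^k)(g)|\le P(k)$ valid for \emph{all} $g\in G$ (Lemma~\ref{eq:majoration-QS-L-k}); here one only has the polynomial growth of $\lambda\mapsto m_\gamma(\lambda,k)$ and must instead cut off $X$ with an invariant bump function $h$ supported in $\|X\|<r_M$ and argue, via the piecewise quasi-polynomial (hence polynomially bounded) nature of $m_\gamma(\lambda,k)$ established through \cite{vergne:Izvestiya} and Theorem~\ref{theo:QR-multiplicities}, that the complementary contribution is $O(k^{-\infty})$ against any test function after the rescaling $\res_\g(k)$. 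Simultaneously one must check that the delocalized formula of \cite{pep-vergne:bismut} is valid on the whole ball $\|X\|<r_M$ with the \emph{same} $P_\gamma$, and that the oscillatory $t$-integral defining $P_\gamma(X)$ interacts well with the Taylor-expansion-and-remainder bookkeeping — i.e.\ that the remainder estimates of Proposition~\ref{prop:technical-asymptotics} survive when $b$ is replaced by $b\cdot P_\gamma$ and the integral over $M$ is restricted to $\supp\chi$. Once these uniformity points are settled, the algebraic heart of the argument is identical to the compact case, and summing \eqref{eq:pepnonob} over $\gamma\in\CB_G$ (together with the local finiteness of $\Bcal_G$ guaranteed by properness of $\phi_G$) recovers Theorem~\ref{asympt1-noncompact}.
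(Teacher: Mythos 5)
Your overall architecture matches the paper's proof: cut off in $X$ with a bump function $h$, estimate the complementary piece via polynomial growth of $m_\gamma(\lambda,k)$, use the delocalized formula of Proposition~\ref{pro:ParVER} on $\|X\|$ small, Taylor-expand the $\widehat A$ class while holding the Chern character and $P_\gamma$ fixed, and match degrees via a grading lemma (the relevant one is Lemma~\ref{lem:stupidgradingMnoncompact}, not Lemma~\ref{lem:gradedstupidcompact}, since you must account for the extra $P_\gamma(X/k)$ factor). Two minor inaccuracies: the radius for which $\widehat A(M)(X)$ is defined is $r_\gamma$ (attached to a relatively compact neighborhood of $\supp\chi$), not $r_M$, which need not exist for noncompact $M$; and the paper absorbs $j_\g^{1/2}$ into the Taylor expansion from the start, setting $\tau^h(X)=h(X)\widehat A(M)(X)j_\g^{1/2}(X)$, rather than peeling it off at the end via Proposition~\ref{prop:asymptotic-quasi-polynomial-star-B} — your route is workable but threatens circularity, since that proposition presupposes the existence of the asymptotic expansion of the underlying $\Theta_k(m)$, which is what is being proven.

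The genuine gap is the load-bearing estimate that you flag but do not carry out: one cannot ``hold $P_\gamma$ fixed just like $b$.'' Unlike $b(X,k)$, whose coefficients are pointwise bounded in $X$, the form $P_\gamma(X)$ has generalized coefficients — the oscillatory integral $\int_0^\infty e^{-itD\theta(X)}dt$ does not converge pointwise in $X$, so the uniform remainder bounds of Proposition~\ref{prop:technical-asymptotics} do not simply transfer. The paper's resolution is to split $P_\gamma(X/k)|_x=\chi(x)+R_x(X,k)$: the $\chi(x)$ term reduces to the compact argument, while for $R_x$ one isolates, for fixed $x$ in the support of $d\chi$, an inner integral of the type $\int_0^\infty\!\!\int_\g e^{i\langle\xi+t\zeta,X\rangle}P(t)\,\tau_{>N}(X/k)\,\widehat\varphi(X)\,dX\,dt$ with $\zeta=\Phi_\theta(x)$. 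Lemma~\ref{prop:majorationschiantes} gives the bound
$\left\lvert\int_\g e^{i\langle\xi+t\zeta,X\rangle}\tau_{>N}(X/k)\widehat\varphi(X)dX\right\rvert\le k^{-(N+1)}c_{N,R}(1+|\xi+t\zeta|^2)^{-R}$,
and the crucial geometric input — $\Phi_\theta(x)\neq 0$ on $\supp(d\chi)$, i.e.\ $\zeta\neq 0$, which comes from $\langle\Phi_\theta,\phi_G\rangle=\|\kappa_G\|^2$ — makes the subsequent $t$-integral finite for $R$ large, uniformly in $x$ on the compact $\supp\chi$. Without this nondegeneracy argument and the associated integrability in $t$, the claim that the remainders survive multiplication by $P_\gamma$ is not established.
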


We end this section by some observations.

We see that $P_\gamma(X)= \chi+\sum_{j=1}^{d} \,\omega_j \,\int_{0}^\infty t^{j-1} e^{it\langle\Phi_\theta,X\rangle}dt$
where $\omega_j$ is a differential form, compactly supported, of degree $2j$. Then
$$
E(X,k):=P_\gamma(X/k)= \chi+\sum_{j=1}^{d} \, k^j\,\omega_j \,\int_{0}^\infty t^{j-1} e^{it\langle\Phi_\theta,X\rangle}dt,\quad k\geq 1,
$$
is a polynomial in $k$ with value equivariant forms with generalized coefficients.

Let $U_\gamma\subset M$ be a relatively compact invariant subset containing the support of $\chi$. So we can choose
$r_\gamma>0$ so that $\hat A(M)(X)$ is well defined on $U_\gamma$ when $\|X\|<r_\gamma$.

We will deduce Theorem \ref{theo-asymptotics-beta} from the following result proved in \cite{pep-vergne:bismut}.

\begin{prop}\label{pro:ParVER}
If $\|X\|<r_\gamma$, we have the relation
$$
\QS_G(M,\Lcal^k,Z_\gamma)(e^X)=\frac{1}{(-2i\pi)^{d}}\int_M \hat{A}(M)(X)\, P_{\gamma}(X)\, e^{-ik\Omega(X)}.
$$
\end{prop}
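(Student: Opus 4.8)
The plan is to prove Proposition~\ref{pro:ParVER} as a delocalized Chern--Weil formula for the equivariant index of the Witten-deformed transversally elliptic operator $D_{\Lcal^k,\phi_G}^{\Ucal}$ on a $G$-invariant neighbourhood $\Ucal$ of $Z_\gamma$ with $\Ucal\cap Z_G=Z_\gamma$, following the equivariant-cohomology-with-generalized-coefficients method of \cite{pep-vergne:bismut}. The point of departure is the Atiyah--Singer cohomological formula for the equivariant index of a transversally elliptic symbol, in the form extended to generalized coefficients by Berline--Vergne: the distribution $g\mapsto\QS_G(M,\Lcal^k,Z_\gamma)(g)$ on $G$ is, near the identity, the push-forward to $G$ of an equivariant form with generalized coefficients on $M$ obtained by integrating over the fibres of $\T^*M$ the product of the compactly supported equivariant Chern character $\mathrm{ch}_{c}\big(\sigma(M,\phi_G)\otimes\Lcal^k\big)(X)$ with the equivariant Todd class of $\T^*M\otimes\C$. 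Thus, for $X\in\g$ small, $\QS_G(M,\Lcal^k,Z_\gamma)(e^X)$ is represented by such a fibre integral, with all $k$-dependence sitting inside the Chern character of $\Lcal^k$.

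First I would split off the line-bundle contribution. By multiplicativity of the Chern character, $\mathrm{ch}_{c}(\sigma(M,\phi_G)\otimes\Lcal^k)(X)=\mathrm{ch}_{c}(\sigma(M,\phi_G))(X)\cdot\mathrm{ch}(\Lcal^k)(X)$, and since $e^{-i\Omega(X)}$ is the equivariant Chern character of $\Lcal$ (Kostant relations (\ref{eq:kostant-rel})), the factor $\mathrm{ch}(\Lcal^k)(X)$ equals $e^{-ik\Omega(X)}$. What remains is a purely geometric datum built from the Dirac symbol $\sigma(M)$ shifted by the Kirwan vector field $\kappa_G$, together with the $\hat A$-type classes coming from the Todd class of $\T^*M\otimes\C$ and the Thom class. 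The next, and central, step is to compute the fibre integral of this geometric datum down to $M$. On the open set where $\kappa_G\neq 0$, which contains the support of $d\chi$, the deformed symbol is homotopic to an honestly elliptic one, so its localized Chern character is $d_\g$-exact; I would produce an explicit primitive by the Mathai--Quillen--Paradan recipe, using the invariant $1$-form $\theta=(\kappa_G,-)$ and the equivariant form $D\theta(X)=d\theta-\langle\Phi_\theta,X\rangle$, obtaining the transgression term $i\,(d\chi)\theta\int_0^\infty e^{-itD\theta(X)}\,dt$. Near $Z_\gamma$, where $\chi\equiv 1$ and $d\chi=0$, the fibre integral is just the ordinary equivariant class $\hat A(M)(X)$. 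Assembling the two pieces, the fibre integral of the geometric datum is $\hat A(M)(X)\,P_\gamma(X)$ modulo $d_\g$-exact forms with generalized coefficients, with $P_\gamma(X)=\chi+i\,(d\chi)\theta\int_0^\infty e^{-itD\theta(X)}\,dt$ as in the text.

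It then remains to combine the pieces and check that everything is well defined for $\|X\|<r_\gamma$. After fibre integration the $\T^*M$-integral collapses to $\frac{1}{(-2i\pi)^{d}}\int_M\hat A(M)(X)\,P_\gamma(X)\,e^{-ik\Omega(X)}$; the $t$-integral converges because $\langle\Phi_\theta,\phi_G\rangle(m)=\|\kappa_G(m)\|^2$, so $\Phi_\theta\neq 0$ on $\mathrm{supp}(d\chi)$, exactly as observed in the discussion of $P_\gamma$; the integrand has compact support since it vanishes off $\mathrm{supp}(\chi)\subset U_\gamma$, where $\hat A(M)(X)$ is defined once $\|X\|<r_\gamma$; and the $d_\g$-exact error integrates to zero by Stokes. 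Independence of the auxiliary choices ($\chi$, the invariant metric entering $\theta$ and $\sigma(M)$) follows because $\QS_G(M,\Lcal^k,Z_\gamma)$ depends only on $Z_\gamma$ and $[P_\gamma]$ is the corresponding ``localized $1$'' in equivariant cohomology with generalized coefficients. As a consistency check, when $M$ is compact, summing over $\gamma\in\Bcal_G$ and using $\QS_G(M,\Lcal^k)=\sum_\gamma\QS_G(M,\Lcal^k,Z_\gamma)$ together with $\sum_\gamma P_\gamma=1$ in cohomology recovers the delocalized formula (\ref{eq:BV}) applied to $\Lcal^k$.

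The hard part will be the central step: giving rigorous meaning to the equivariant forms with generalized coefficients and to the oscillatory integral $\int_0^\infty e^{-itD\theta(X)}\,dt$, proving that the localized compactly supported Chern character of the $\kappa_G$-deformed Dirac symbol equals $\hat A(M)(X)\,P_\gamma(X)$ modulo exact forms after fibre integration, and obtaining estimates uniform in small $X$ so that the identity of generalized functions on $G$ holds in a neighbourhood of the identity. This is the content of \cite{pep-vergne:bismut} and rests on the equivariant $K$-theory of transversally elliptic symbols together with the Paradan and Kumar--Vergne calculus of equivariant cohomology with generalized coefficients.
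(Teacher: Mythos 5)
The paper does not actually prove Proposition~\ref{pro:ParVER}; it simply states that the result is ``proved in \cite{pep-vergne:bismut}'' and moves on. Your sketch faithfully reconstructs the argument of that reference — the Berline--Vergne cohomological index formula for transversally elliptic symbols, multiplicativity of the Chern character to peel off $e^{-ik\Omega(X)}$, and the Paradan/Mathai--Quillen transgression along the invariant $1$-form $\theta=(\kappa_G,-)$ that produces the localized form $P_\gamma(X)$ — and you correctly flag that all the hard analysis (equivariant forms with generalized coefficients, convergence of $\int_0^\infty e^{-itD\theta(X)}dt$, uniform estimates in $X$) lives in \cite{pep-vergne:bismut}, so this is the same route the paper implicitly takes.
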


So, for $k\geq 1$, the generalized function $\QS_G(M,\Lcal^k,Z_\gamma)(e^{X/k})j_\g^{1/2}(X/k)$ coincides with
$$
\frac{1}{(-2i\pi)^{d}}\int_M j_\g^{1/2}(X/k)\hat{A}(M)(X/k)\, P_{\gamma}(X/k)\, e^{-ik\Omega(X/k)}
$$
on the ball $\{\|X\|/k< r_\gamma$\}.

Let us compute the Laurent series of 
$$[j_\g^{1/2}(X/k)\,\hat{A}(M)(X/k)\, P_{\gamma}(X/k)\, e^{-ik\Omega(X/k)}]_{[2d]}$$
where $[-]_{[2d]}$ means the component of
maximal degree $2d$ in $\Acal^\bullet(M)$.
We write $j_\g^{1/2}(X) \Ahat(M)(X)=
\sum_{n=0}^{\infty}d_n(X)$
as a sum of closed equivariant forms of equivariant degree $n$.
Recall that $E(X,k)=P_{\gamma}(X/k)$ and $b(X,k)=e^{-ik\Omega(X/k)}$
 depend polynomially of $k$.

We write also $j_\g^{1/2}(X/k)\hat{A}(M)(X)=\sum_{n=0}^{\infty}\tau_n(X)$ where $\tau_n(X)$
is an equivariant form with  coefficients homogeneous polynomial functions of $X$  of degree $n$.
We proceed as in the proof of Lemma \ref{crucial}
comparing the terms of top exterior degree $2d$.
Arguing as in the proof of Lemma \ref{lem:gradedstupidcompact}, we have the following formula.

\begin{lemma}\label{lem:stupidgradingMnoncompact}
$$\Big[b(X,k)\Big(\sum_{n=0}^{\infty} \frac{1}{k^n}\tau_n(X)\Big)\, E(X,k)\Big]_{[2d]}=
\Big[e^{-i\Omega(X)}\Big(\sum_{n=0}^{\infty} \frac{1}{k^n}d_n(X)\Big)\, P_\gamma(X)\, \Big]_{[2d]}.$$
\end{lemma}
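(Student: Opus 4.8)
The plan is to mimic exactly the proof of Lemma \ref{lem:gradedstupidcompact}, tracking the interplay between exterior degree and polynomial/$k$-degree, with the only new ingredient being the presence of the equivariant form $E(X,k) = P_\gamma(X/k)$ with generalized coefficients. First I would record the three pieces and their dependence on $k$: $b(X,k) = e^{-ik\Omega(X/k)} = e^{-ik\Omega}e^{i\ll\phi_G,X\rr}$ is polynomial in $k$ (the factor $e^{i\ll\phi_G,X\rr}$ carries no $k$, and $e^{-ik\Omega}$ is a polynomial in $k$ of degree $\le d$ since $\Omega^{d+1}=0$ on exterior degree grounds); $E(X,k) = \chi + \sum_{j=1}^d k^j \omega_j \int_0^\infty t^{j-1} e^{it\ll\Phi_\theta,X\rr}\,dt$ is polynomial in $k$ of degree $\le d$, with $\omega_j$ of exterior degree $2j$; and $\tau_n(X)$ has polynomial coefficients homogeneous of degree $n$ in $X$, with $d_n(X) = \sum_{q\le n}(\tau_q(X))_{[n-2q]}$ expressing the equivariant-degree-$n$ piece $d_n$ in terms of the $\tau_q$, exactly as in the proof of Lemma \ref{crucial} (here $d_n$ plays the role that $\Ahat_n$ played there, since $j_\g^{1/2}(X)\Ahat(M)(X) = \sum_n d_n(X)$ and $j_\g^{1/2}(X/k)\Ahat(M)(X) = \sum_n \tau_n(X)$).

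Next I would extract the coefficient of $k^{-n}$ in the left-hand side, at a fixed point $x\in M$, keeping only the exterior-degree-$2d$ component. Writing $e^{-ik\Omega} = \sum_{a\ge 0} \frac{(-i)^a}{a!} k^a \Omega^a$ and $E(X,k) = \sum_{b\ge 0} k^b E_b(X)$ with $E_0 = \chi$ and $E_b = \omega_b\int_0^\infty t^{b-1}e^{it\ll\Phi_\theta,X\rr}\,dt$ for $b\ge 1$ (so $E_b$ has exterior degree $2b$), the coefficient of $k^{-n}$ in $\big[b(X,k)(\sum_m k^{-m}\tau_m(X)) E(X,k)\big]_{[2d]}$ is the sum over $a,b\ge 0$ and $m$ with $a+b-m = -n$, i.e. $m = n+a+b$, of $\frac{(-i)^a}{a!}\big[\Omega^a\,\tau_{n+a+b}(X)\,E_b(X)\big]_{[2d]} e^{i\ll\phi_G,X\rr}$. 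Since $\Omega^a$ has exterior degree $2a$ and $E_b$ has exterior degree $2b$, the degree-$2d$ part forces the $\tau_{n+a+b}$ contribution to be its exterior-degree-$(2d-2a-2b)$ component. Summing the $\tau$-terms at fixed $a,b$ using $d_n(X) = \sum_{q}(\tau_q(X))_{[n-2q]}$ (reindexed appropriately) collapses the sum over the "polynomial slack" exactly as in Lemma \ref{lem:gradedstupidcompact}, and reassembles $e^{-i\Omega(X)}$ (from $\sum_a \frac{(-i\Omega)^a}{a!}$ times $e^{i\ll\phi_G,X\rr}$) and $P_\gamma(X)$ (from $\sum_b E_b(X) = P_\gamma(X)$, noting $E_b(X) = (P_\gamma(X))$ evaluated before the rescaling), yielding the exterior-degree-$2d$ part of $e^{-i\Omega(X)}(\sum_n k^{-n} d_n(X)) P_\gamma(X)$.

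The main obstacle is purely bookkeeping: one must handle the generalized (distributional in $X$) coefficients of $E_b$ carefully, since $\int_0^\infty t^{b-1}e^{it\ll\Phi_\theta,X\rr}\,dt$ is not a polynomial in $X$ and so the "matching polynomial degree to $1/k$ power" heuristic of Lemma \ref{crucial} does not apply to the $E$-factor. The point to make rigorous is that this does not matter: we are not expanding $E$ in a Taylor series, we are using $E(X,k) = P_\gamma(X/k)$ where the $k$-dependence is the honest homogeneity $\omega_j\int_0^\infty t^{j-1}e^{it\ll\Phi_\theta,X/k\rr}\,dt = k^j\,\omega_j\int_0^\infty s^{j-1}e^{is\ll\Phi_\theta,X\rr}\,ds$ (substituting $s = t/k$), so $E(X,k)$ is genuinely a polynomial in $k$ with coefficients the fixed equivariant forms $E_b(X)$, and $\sum_b E_b(X)$ is literally $P_\gamma(X)$. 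Once this is observed, the identity is a finite rearrangement of the coefficient of each power $k^d/k^n$, identical in structure to the proof of Lemma \ref{lem:gradedstupidcompact}, and I would simply say "arguing as in the proof of Lemma \ref{lem:gradedstupidcompact}" and display the coefficient computation above as the verification.
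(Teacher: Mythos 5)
Your approach is the right one and is essentially the paper's own (the paper's proof is literally the single sentence "arguing as in the proof of Lemma \ref{lem:gradedstupidcompact}"). The observation you isolate as the key point is indeed the key point: $E(X,k)=P_\gamma(X/k)$ is an honest polynomial in $k$ whose coefficients are the fixed equivariant forms $E_b(X)$ of exterior degree $2b$, and $\sum_b E_b(X)=P_\gamma(X)$, so no Taylor expansion of the generalized-coefficient factor is ever needed and the degree-counting argument of the compact case carries over unchanged.

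There is, however, a slip in the coefficient extraction that you should have caught, and it points to a typo in the displayed identity itself. You extract the coefficient of $k^{-n}$ on the left, giving $m=n+a+b$ and hence $(\tau_{n+a+b})_{[2d-2a-2b]}$. But $(\tau_q)_{[\ell]}$ is a constituent of $d_N$ precisely when $2q+\ell=2N$, so $(\tau_{n+a+b})_{[2d-2a-2b]}$ reassembles into $d_{n+d}$, not $d_n$. If instead one mimics Lemma \ref{lem:gradedstupidcompact} literally — that proof extracts the coefficient of $k^d\cdot k^{-n}$, giving $m=n+a-d$ — then here one should extract the coefficient of $k^{d-n}$, so $m=n+a+b-d$ and $(\tau_{n+a+b-d})_{[2d-2a-2b]}$, which is a component of $d_n$. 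Either way the left-hand side equals $k^d$ times the displayed right-hand side, not the right-hand side itself. This is consistent with Lemma \ref{lem:gradedstupidcompact} (which carries the $k^d$) and with the formula immediately after the present lemma, $\QS_G(M,\Lcal^k,Z_\gamma)(e^{X/k})\,j_\g^{1/2}(X/k)=k^d\sum_n k^{-n}g_n(X)$, which needs that $k^d$. In short: the lemma as printed is missing a factor $k^d$ on the right, and the phrase "reindexed appropriately" in your write-up hides exactly the step that would have revealed this.
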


Thus we can write formally
$$\QS_G(M,\Lcal^k,Z_\gamma)(e^{X/k})j_\g^{1/2}(X)=k^{d}\sum_{n=0}^{\infty}\,\frac{1}{k^n} g_n(X)$$
where $g_n(X)$ is the distribution
$$
g_n(X):=\frac{1}{(-2i\pi)^{d}}\int_M d_n(X)\, P_\gamma(X)\, e^{-i\Omega(X)}.
$$

\subsection{Proof of Theorem \ref{theo-asymptotics-beta}}
We fix a $G$-invariant function $h:\g\to\R$ equal to $1$ in a neighborhood of $0$ and with support contained in $\|X\|<r_\gamma$.
If $\varphi$ is a smooth function with compact support on $\g^*$, we have
$\ll\Theta^\gamma_k,\varphi\rr= \mathrm{I}_k+ \mathrm{J}_k$ where
$$
\mathrm{I}_k=\int_{\g}\QS_G(M,\Lcal^k,Z_\gamma)(e^{X/k})j_\g^{1/2}(X/k)h(X/k)\widehat{\varphi}(X)dX
$$
and
$$
\mathrm{J}_k=\int_{\g}\QS_G(M,\Lcal^k,Z_\gamma)(e^{X/k})j_\g^{1/2}(X/k)(1-h(X/k))\widehat{\varphi}(X)dX.
$$

For estimating  $\mathrm{J}_k$, we will use estimation of  the Fourier coefficients $m_\gamma(\lambda,k)$ and we will prove that $\mathrm{J}_k=O(k^{-\infty})$.
For estimating $\mathrm{I}_k$,
we will use  Proposition (\ref{pro:ParVER}).

To analyze these expressions, we  need the following technical lemma.

\begin{lemma}\label{prop:majorationschiantes}
Let $u(X)$ be a $C^{\infty}$ function of $X$.
Assume that $u(X)$ and all its derivatives are  of at most polynomial growth and
  that $u(X)$ vanishes at order $N$ at $X=0$.

Let $R$ be an integer.
  Then there exists a constant $c_{R}$ such that
$$
|\int_\g  u(X/k)
e^{i\langle \xi,X\rangle} \widehat{\varphi}(X)dX|\leq \frac{1}{k^{N+1}}\frac{c_{R}}{(1+\|\xi\|^2)^R},
$$
for all $(\xi,k)\in\ggot^*\times \N\setminus\{0\}$.
\end{lemma}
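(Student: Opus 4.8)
The plan is to exploit the vanishing of $u$ at order $N$ together with the rapid decrease of $\widehat{\varphi}$ and integration by parts. First I would use a Taylor-with-integral-remainder representation of $u$: since $u$ vanishes to order $N$ at the origin, I can write, in multi-index notation, $u(Y)=\sum_{|\alpha|=N+1}Y^\alpha D_\alpha(Y)$, where each $D_\alpha$ is $C^\infty$ and, together with all its derivatives, of at most polynomial growth (this is exactly the device already used in the proof of Proposition \ref{prop:technical-asymptotics}). Substituting $Y=X/k$ gives
$$
\int_\g u(X/k)e^{i\langle\xi,X\rangle}\widehat{\varphi}(X)\,dX=\frac{1}{k^{N+1}}\sum_{|\alpha|=N+1}\int_\g X^\alpha D_\alpha(X/k)\,e^{i\langle\xi,X\rangle}\widehat{\varphi}(X)\,dX,
$$
so the factor $k^{-(N+1)}$ is already accounted for; it remains to bound each integral on the right uniformly in $k\geq 1$ and with the decay $(1+\|\xi\|^2)^{-R}$ in $\xi$.

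Next I would extract the $\xi$-decay by integration by parts against the oscillatory factor $e^{i\langle\xi,X\rangle}$. Writing $\Delta_X$ for the Laplacian on $\g$ (with respect to the chosen scalar product) and using $(1+\|\xi\|^2)^R e^{i\langle\xi,X\rangle}=(1-\Delta_X)^R e^{i\langle\xi,X\rangle}$, I integrate by parts $R$ times to move $(1-\Delta_X)^R$ onto the smooth amplitude $X\mapsto X^\alpha D_\alpha(X/k)\widehat{\varphi}(X)$. Here the key point is that every derivative falling on $D_\alpha(X/k)$ produces a factor of $1/k\leq 1$ times a derivative of $D_\alpha$ evaluated at $X/k$; since $\|X/k\|\leq\|X\|$ and all derivatives of $D_\alpha$ have at most polynomial growth, the whole amplitude $(1-\Delta_X)^R\big(X^\alpha D_\alpha(X/k)\widehat{\varphi}(X)\big)$ is bounded, uniformly in $k\geq 1$, by $C\,(1+\|X\|)^{M}\,|\widehat{\varphi}|_{(R)}(X)$ for some fixed $M$ and some finite sum of derivatives of $\widehat{\varphi}$ up to order $2R$. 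Because $\varphi$ is compactly supported, $\widehat{\varphi}$ and all its derivatives are Schwartz, so $(1+\|X\|)^M$ times them is integrable on $\g$. This yields
$$
\Big|\int_\g X^\alpha D_\alpha(X/k)e^{i\langle\xi,X\rangle}\widehat{\varphi}(X)\,dX\Big|\leq\frac{c_{R,\alpha}}{(1+\|\xi\|^2)^R},
$$
and summing over the finitely many $\alpha$ with $|\alpha|=N+1$ gives the claimed bound with $c_R=\sum_\alpha c_{R,\alpha}$.

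The main obstacle, and the only point demanding care, is the uniformity in $k$: one must be sure that rescaling $X\mapsto X/k$ inside $D_\alpha$ never worsens the estimates. This is handled precisely by the two observations that each differentiation of $D_\alpha(X/k)$ costs a harmless factor $1/k\leq 1$ and that $\|X/k\|\leq\|X\|$, so the polynomial-growth bounds on $D_\alpha$ and its derivatives can be replaced throughout by polynomials in $\|X\|$ independent of $k$. Everything else — the Taylor remainder representation, the integration by parts against $(1-\Delta_X)^R$, and the integrability of polynomial-times-Schwartz functions — is routine. Note that this is exactly the ``uniform version with parameters'' anticipated in the remark following Proposition \ref{prop:technical-asymptotics}, the parameter here being $\xi\in\g^*$ entering only through the oscillatory exponential, which is why the argument goes through with constants independent of $\xi$.
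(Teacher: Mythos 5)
Your proof is correct and matches the paper's argument in all essential respects: the same Taylor-with-remainder decomposition $u(X)=\sum_{|\alpha|=N+1}X^\alpha D_\alpha(X)$ extracts the factor $k^{-(N+1)}$, and integration by parts against a power of the Laplacian (the paper uses $\Delta=-\sum_j\partial_{X_j}^2$ and writes $(1+\Delta^2)^R$ where you write the equivalent $(1-\Delta_X)^R$) produces the $(1+\|\xi\|^2)^{-R}$ decay, with uniformity in $k$ coming exactly from the two observations you highlight, namely $\|X/k\|\leq\|X\|$ and $1/k\leq 1$. This is the same proof.
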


\begin{proof}
We may write in multi-index notation
$$u(X)=\sum_{\alpha,|\alpha|=N+1} X^{\alpha}v_\alpha(X)$$
with  $v_\alpha(X)$ (and derivatives)  bounded by  polynomial functions of $X$.
We  compute $I_R= k^{N+1}(1+\|\xi\|^2)^R\int_\g  u(X/k)
e^{i\langle \xi,X\rangle} \widehat{\varphi}(X)dX$. Thus
$$
I_R=(1+\|\xi\|^2)^R\int_\g \sum_{|\alpha|=N+1} X^{\alpha} v_\alpha(X/k)
e^{i\langle \xi,X\rangle} \widehat{\varphi}(X)dX.
$$

Let $\Delta=-\sum_{j}\partial_{X_j}^2$ be the Laplacian on $\g$.
Then $I_R$ is equal to
$$
 \sum_{|\alpha|=N+1} \int_\g X^{\alpha} v_\alpha(X/k)
\widehat{\varphi}(X) \left((1+\Delta^2)^R\cdot (e^{i\langle \xi,X\rangle}\right)dX=
\int_\g  L_R(X,k)e^{i\langle \xi,X\rangle}dX$$
where
$L_{R}(X,k)=(1+\Delta^2)^R \cdot
\left(\sum_{|\alpha|=N+1}X^{\alpha} v_\alpha(X/k) \widehat{\varphi}(X)\right).$

Using the fact that $\widehat{\varphi}(X)$ is rapidly decreasing (as well as all its derivatives), that the derivatives
of the functions $v_\alpha$ are bounded by polynomials,
and  that  $1/k\leq 1$ for $k\geq 1$,
we see that $L_{R}(X,k)$ can be bounded by a  rapidly decreasing function of $X$
independent of $k$.
So
$|\int_\g  L_R(X,k) e^{i\langle \xi,X\rangle}dX|\leq c_R.$
\end{proof}

We return to our proof.
We start  by checking  that $\mathrm{J}_k=O(k^{-\infty})$. For this computation, we can assume that
$\varphi$ is $G$-invariant.
Let
$$
c(\lambda,k)=\int_{\g}\chi_\lambda(e^{X/k})j_\g^{1/2}(X/k)(1-h(X/k))\widehat{\varphi}(X)dX.
$$

This
is equal to $\vol(G\lambda)\int_{\g}e^{i\langle\lambda/k,X\rangle}(1-h(X/k))\widehat{\varphi}(X)dX$.

By definition $\QS_G(M,\Lcal^k,Z_\gamma)(e^X)=\sum_{\lambda\in A_G}\,m_\gamma(\lambda,k)\,\chi_\lambda(e^X)$, so  we get
$$
\mathrm{J}_k=\sum_{\lambda\in A_G}\,m_\gamma(\lambda,k)\vol(G\lambda)
\int_{\g}e^{i\langle\lambda/k,X\rangle}(1-h(X/k))\widehat{\varphi}(X)dX.
$$

Now consider $u(X)=1-h(X)$. It vanishes identically in a neighborhood of $0$.
As $h(X)$ is compactly supported, $u(X)$ and all derivatives are bounded.
So we can apply Lemma   \ref{prop:majorationschiantes} and obtain
$|c(\lambda,k)|\leq   \vol(G\lambda)\frac{1}{k^{N+1}} \frac{c_{R,N}}{(1+\|\lambda/k\|^2)^R}$
for any integers $(R,N)$.
Since the multiplicities $m_\gamma(\lambda,k)$ have at most a polynomial growth in the variable $(\lambda,k)$,
 we can conclude
that  $\mathrm{J}_k=O(k^{-\infty})$.

We now estimate
$$\mathrm{I}_k=\int_{\g}\QS_G(M,\Lcal^k,Z_\gamma)(e^{X/k})j_\g^{1/2}(X/k)h(X/k)\widehat{\varphi}(X)dX.
$$
As $h(X/k)=0$ when $\|X\|/k\geq r_\gamma$, we may use Proposition \ref{pro:ParVER}.
Let $\tau^h(X)=h(X)\hat{A}(M)(X)j_\g^{1/2}(X)$,
a smooth compactly supported function from $\g$ to  differential forms on $U_\gamma$.
We fix $x\in U_\gamma$ and consider
$\tau^h_x(X)$, a function from $\g$ to
$\Lambda T^*_xM$.
Define
$$\mathrm{I}^x_k=\int_{\g}\tau^h_x(X/k) E_{x}(X,k) b_x(X,k)\widehat{\varphi}(X)dX$$
with
$$E_{x}(X,k)= \chi(x)+
\sum_{j=1}^{d} \, k^j\,(\omega_{j})|_x \,\int_{0}^\infty t^{j-1} e^{it\langle\zeta,X\rangle}dt$$
if $\zeta=\Phi_\theta(x)$
and $b_x(X,k)=e^{-ik\Omega_x}e^{i \ll \xi,X\rr}$ if $\xi=\phi_G(x)$. So
$$
{\rm I}^x_k=\int_M \int_{\g}\tau^h_x(X/k) E_{x}(X,k) b_x(X,k)\widehat{\varphi}(X)dX.
$$

In view of Lemma \ref{lem:stupidgradingMnoncompact},
we only need to prove that the asymptotic expansion of $\mathrm{I}^x_k$ is obtained by replacing
$\tau^h_x(X)$ by its Taylor series (we need to care of uniform estimates in $x$ in the compact support of $\chi$).

Thus we fix $x$ and write $E_x(X,k)=\chi(x)+R_x(X,k)$.
So
$\mathrm{I}^x_k=\mathrm{S}^x_k+\mathrm{T}^x_{k}$
with
$$\mathrm{S}^x_k=\chi(x)  \int_{\g}\tau_x^h(X/k) b_x(X,k)\widehat{\varphi}(X)dX$$
and
$$\mathrm{T}^x_{k}=\int_{\g}R_x(X,k)\tau_x^h(X/k) b_x(X,k)\widehat{\varphi}(X) dX.$$

The expression  for $\mathrm{S}^x_k$ has been already analyzed when proving Proposition \ref{asympt1}, and
indeed it admits an asymptotic expansion obtained by replacing $\tau^h_x(X)$ by its Taylor series.

Consider $a(X,t)=e^{i\ll\xi,X\rr}e^{it \ll\zeta,X\rr} P(t)$
where $P(t)$ is a polynomial function on $t$ with values in $\Lambda T^*_xM$.
We only need to prove that 
$$
\mathrm{W}^x_k=\int_{t=0}^{\infty}\int_\g a(X,t)\tau^h_x(X/k) \widehat{\varphi}(X) dX
$$
 admits an asymptotic expansion, also obtained by replacing $\tau^h_x(X)$ by its Taylor series.
 We fix $x\in M$, write
 $\tau^h_x(X)=\tau_{\leq N}(X)+\tau_{>N}(X)$.
Then $\tau_{> N}(X)$ vanishes  at order $N$ at $X=0$.
 As $\tau^h(X)$ was compactly supported, $\tau_{\leq N}(X)$ and $\tau_{>N}(X)$
 are of at most polynomial growth, as well as derivatives.
 So we use Lemma \ref{prop:majorationschiantes}
 and obtain, for any positive  integer $R$,
 $$
 \vert\int_\g a(X,t)\tau_{>N}(X/k) \widehat{\varphi}(X) dX \vert \leq P(t) \frac{1}{k^{N+1}}
 \frac{c_{N,R}}{(1+|\xi+t\zeta|^2)^R}\cdot
 $$

If $R$ is sufficiently large, $\int_{t=0}^\infty P(t) \frac{c_{N,R}}{(1+|\xi+t\zeta|^2)^R}<\infty$.
So we obtain our estimate for the rest. All our estimates can be done uniformly in $x$ when $x$ runs in the compact support of $\chi$.
This ends the proof of
Theorem \ref{theo-asymptotics-beta}.

\subsection{Proof of Theorem \ref{asympt1-noncompact}}

The decomposition $\Theta^{(M,\Lcal)}_k=\sum_\gamma \Theta^\gamma_k$ is well-defined since
the distribution $\Theta^\gamma_k$ is supported in $\{\|\xi\|\geq \|\gamma\|\}$ (see \cite{pep:formal2}).
If $\varphi$ is a test function, then
$$
\ll\Theta^{(M,\Lcal)}_k,\varphi\rr= \sum_\gamma \ll\Theta^\gamma_k,\varphi\rr
$$
where the sum in the right hand side has only a finite number of non-zero terms.

Thanks to Theorem \ref{theo-asymptotics-beta}, we have the asymptotic expansion
$$
\Theta^\gamma_k\equiv k^{d}\, j_{\ggot}^{1/2}(i\partial/k)\sum_{n=0}^{\infty}\frac{1}{k^n}DH^G(M,\Omega,\hat{A}_n(M) P_\gamma)
$$
for any $\gamma$. Hence
$\Theta^{(M,\Lcal)}_k\equiv k^{d}\, j_{\ggot}^{1/2}(i\partial/k)\sum_{n=0}^{\infty}k^{-n}\theta_n$
with $\theta_n$ equal to $\sum_\gamma DH^G(M,\Omega,\hat{A}_n(M) P_\gamma)$.
The proof of Theorem \ref{asympt1-noncompact} is complete since
$\sum_\gamma DH^G(M,\Omega,\hat{A}_n(M) P_\gamma)= DH^G(M,\Omega,\hat{A}_n(M))$  by Equation (\ref{eq:pepnonob}).

\section{Functoriality}

\subsection{$H$ and $G$}

Let $H$ be a connected compact subgroup of $G$.
Let $r:\g^*\to \h^*$ be the projection.
For  $\theta$ a distribution with compact support on $\g^*$,
the push-forward $ \ll r_*\theta,\varphi\rr = \ll \theta, \varphi\circ r\rr $
is well defined.  The Fourier transform
$\Fcal_{\h^*}(r_*\theta)$ is the restriction to $\h$ of the Fourier transform
$\Fcal_{\g^*} (\theta)$.
We can define $r_*\theta$ more
generally whenever $\theta$ is compactly supported along the fibers of $r$.

The twisted Duistermaat-Heckman distributions behave very well under the push-forward map $r_*$.


From (\ref{eq:DH-noncompact}) we get immediately the following

\begin{proposition}Suppose that the moment map $\phi_H:=r\,\circ\, \phi_G$ is proper. Then, for any $\nu\in H_G^*(M)$, we have
$$
r_*DH^G(M,\Omega,\nu)=DH^H(M,\Omega,\nu).
$$
\end{proposition}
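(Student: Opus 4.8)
The plan is to deduce the identity from the Fourier-transform description of the twisted Duistermaat--Heckman distributions together with the compatibility of Fourier transform with push-forward recorded in Section~\ref{sec:fourier}. First I would dispose of the well-definedness issues. Since $r:\g^*\to\h^*$ is a linear surjection, properness of $\phi_H=r\circ\phi_G$ forces properness of $\phi_G$: for a compact $C\subset\g^*$ one has $\phi_G^{-1}(C)\subset\phi_G^{-1}(r^{-1}(r(C)))=\phi_H^{-1}(r(C))$, which is compact, so $\phi_G^{-1}(C)$ is a closed subset of a compact set. Hence $DH^G(M,\Omega,\nu)\in\Dcal'(\g^*)^G$ is defined and supported on $\phi_G(M)$. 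Moreover, for compact $C\subset\h^*$ we have $\phi_G(M)\cap r^{-1}(C)=\phi_G(\phi_H^{-1}(C))$, which is compact; thus $r$ is proper on $\supp DH^G(M,\Omega,\nu)$, so $DH^G(M,\Omega,\nu)$ is compactly supported along the fibres of $r$ and $r_*DH^G(M,\Omega,\nu)$ is defined in the sense of Section~\ref{sec:fourier}. Here, as in the statement, $DH^H(M,\Omega,\nu)$ stands for $DH^H(M,\Omega,\nu\vert_\h)$, where $\nu\vert_\h$ is the class in $H^*_H(M)$ obtained by restricting the polynomial parameter from $\g$ to $\h$.

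The main step is then short. By definition $DH^G(M,\Omega,\nu)=\Fcal_\g\big(X\mapsto\frac{1}{(-2i\pi)^d}\int_M e^{-i\Omega(X)}\nu(X)\big)$, and likewise $DH^H(M,\Omega,\nu)=\Fcal_\h\big(Y\mapsto\frac{1}{(-2i\pi)^d}\int_M e^{-i\Omega_H(Y)}\nu\vert_\h(Y)\big)$. Restricting the equivariant curvature to $Y\in\h$ gives $\Omega(Y)=\Omega-\langle\phi_G,Y\rangle=\Omega-\langle r\circ\phi_G,Y\rangle=\Omega_H(Y)$, and $\nu(Y)=\nu\vert_\h(Y)$ by definition of the restriction; hence the function on $\g$ whose Fourier transform is $DH^G(M,\Omega,\nu)$ restricts on $\h$ to the function whose Fourier transform is $DH^H(M,\Omega,\nu)$. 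Since $\Fcal_{\h^*}(r_*\theta)$ equals the restriction to $\h$ of $\Fcal_{\g^*}(\theta)$ for any $\theta$ compactly supported along the fibres of $r$ (Section~\ref{sec:fourier}), applying this to $\theta=DH^G(M,\Omega,\nu)$ yields $\Fcal_{\h^*}(r_*DH^G(M,\Omega,\nu))=\Fcal_{\h^*}(DH^H(M,\Omega,\nu))$, and the claim follows by injectivity of the Fourier transform on tempered distributions.

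Alternatively, one can argue directly from (\ref{eq:DH-noncompact}): for a test function $\varphi$ on $\h^*$ one has $\langle r_*DH^G(M,\Omega,\nu),\varphi\rangle=\langle DH^G(M,\Omega,\nu),\varphi\circ r\rangle$ (legitimate because $\varphi\circ r$ is compactly supported along the fibres of $r$ and $DH^G$ is supported on $\phi_G(M)$), after which one identifies $[\nu(-i\partial)(\varphi\circ r)](\phi_G(m))$ with $[(\nu\vert_\h)(-i\partial)\varphi](\phi_H(m))$. This last identity is elementary: writing $\nu=\sum_a p_a\,\nu_a$ with $p_a$ polynomial on $\g$, the constant-coefficient operator $p_a(-i\partial)$ on $\g^*$ annihilates the pullback $\varphi\circ r$ on every monomial involving a direction transverse to $\h$, so only $p_a\vert_\h(-i\partial)$ survives and acts as differentiation of $\varphi$ pulled back by $r$. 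The only genuinely non-formal point is the well-definedness of $r_*$ on $DH^G(M,\Omega,\nu)$ addressed in the first paragraph; the rest is bookkeeping, which is why the statement is essentially immediate from the definitions.
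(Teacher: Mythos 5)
Your second (``alternative'') argument is precisely what the paper intends by ``From (\ref{eq:DH-noncompact}) we get immediately the following'': one applies $DH^G(M,\Omega,\nu)$ to $\varphi\circ r$ and uses the identity $p(-i\partial)(\varphi\circ r)=\bigl(p\vert_\h(-i\partial)\varphi\bigr)\circ r$ for polynomials $p$ on $\g$, plus $r\circ\phi_G=\phi_H$. Your preliminary observations (properness of $\phi_H$ implies that of $\phi_G$; $DH^G$ is compactly supported along the fibres of $r$, so $r_*$ makes sense) are correct and worth recording, since the paper leaves them implicit. Your first argument, however, does not quite work in the non-compact setting: you write $DH^G(M,\Omega,\nu)=\Fcal_\g\bigl(X\mapsto\frac{1}{(-2i\pi)^d}\int_M e^{-i\Omega(X)}\nu(X)\bigr)$, but when $M$ is non-compact that $M$-integral need not converge (this is exactly why the paper defines $DH^G$ in the non-compact case directly by (\ref{eq:DH-noncompact}), applying $\nu(-i\partial)$ to $\varphi$ first so that the integrand on $M$ has compact support). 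Moreover, the paper's Fourier-compatibility statement $\Fcal_{\h^*}(r_*\theta)=\Fcal_{\g^*}(\theta)\vert_\h$ is stated only for $\theta$ of compact support, and you are invoking it for $\theta$ merely compactly supported along the fibres, where $r_*\theta$ is no longer compactly supported. None of this affects the correctness of your conclusion, because your alternative derivation is self-contained and is the one that applies in general, but you should lead with it rather than with the Fourier-function formulation.
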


Re-scaling behave also very well under the push-forward map $r_*$:
$$
\res_{\h^*}(k)\circ r_*=r_*\circ \res_{\g^*}(k).
$$

\bigskip

In the rest of the article, objects associated to $\lambda$ are associated to $G$, while objects associated to $\mu$ are associated to $H$.

\medskip

For $\mu\in A_H$ and $\lambda\in A_G$, let
 $c(\mu,\lambda)$ be the multiplicity of the representation
 $V_\mu$ of $H$ in the restriction of $V_\lambda$ to $H$.
 In other words, for $h\in H$,
$\chi_\lambda(h)=\sum_{\mu\in A_H}\, c(\mu,\lambda) \chi_\mu(h)$.

Consider the $H$-invariant function $j_{\g/\h}^{1/2}(Y)=\det_{\g/\h}^{1/2}(\frac{e^{Y/2}-e^{-Y/2}}{Y})$ on $\h$.
Its Fourier transform is a compactly supported measure $B_{\g/\h}$ on $\h^*$.
We have an exact relation between the push-forward of the measure $\beta_\lambda$ on $\h^*$ and measures of $H$-admissible coadjoint orbits. 
We denote by $B_{\g/\h}^k=\res_{\h^*}(k)B_{\g/\h}$.

\begin{lemma}\label{rescaled1orbits}
For $\lambda\in A_G$,
we have the relations
\begin{eqnarray*}
r_*(\beta_\lambda)&=&B_{\g/\h}\star\Big(\sum_{\mu\in A_H}\, c(\mu,\lambda)\,\beta_{\mu}\Big),\\
\res_{\h^*}(k)\left(r_*(\beta_\lambda)\right)&=&B^k_{\g/\h}\star\Big(\res_{\h^*}(k)
\big(\sum_{\mu\in A_H}\, c(\mu,\lambda)\,\beta_{\mu}\big)\Big).
\end{eqnarray*}
Here the $\star$ sign denotes the convolution.
\end{lemma}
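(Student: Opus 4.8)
The plan is to reduce everything to the level of Fourier transforms, where the statement becomes the elementary fact that restriction of functions from $\g$ to $\h$ intertwines the relevant operations. First I would recall from Section \ref{sec:fourier} that for a compactly supported distribution, the Fourier transform of a push-forward $r_*\theta$ is the restriction to $\h$ of $\Fcal_{\g^*}(\theta)$, and that the Fourier transform of a convolution is the product of Fourier transforms. So the first identity is equivalent to the following equality of smooth functions on $\h$:
$$
\Fcal_{\g^*}(\beta_\lambda)\big|_\h = j_{\g/\h}^{1/2}(Y)\cdot\Big(\sum_{\mu\in A_H} c(\mu,\lambda)\,\Fcal_{\h^*}(\beta_\mu)(Y)\Big),\qquad Y\in\h.
$$
By the Kirillov identity \eqref{eq:kirillov-V-lambda}, the left-hand side is $\chi_\lambda(e^Y)\,j_\g^{1/2}(Y)$ and the $\mu$-summand on the right is $c(\mu,\lambda)\,\chi_\mu(e^Y)\,j_\h^{1/2}(Y)$. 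Since $\sum_{\mu\in A_H} c(\mu,\lambda)\chi_\mu(h)=\chi_\lambda(h)$ for $h\in H$ by definition of the branching coefficients, the desired equality reduces to the multiplicative property
$$
j_\g^{1/2}(Y) = j_{\g/\h}^{1/2}(Y)\, j_\h^{1/2}(Y),\qquad Y\in\h,
$$
which is immediate from the definitions $j_\g^{1/2}(Y)=\det_\g^{1/2}\!\big(\tfrac{e^{Y/2}-e^{-Y/2}}{Y}\big)$, $j_\h^{1/2}$, $j_{\g/\h}^{1/2}$ together with the $\Ad(H)$-invariant splitting $\g=\h\oplus\g/\h$, under which the determinant factors. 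One minor point to address is convergence/support: $B_{\g/\h}=\Fcal_\h(j_{\g/\h}^{1/2})$ is a compactly supported measure on $\h^*$ (this is stated in the excerpt just above the lemma), so the convolution $B_{\g/\h}\star\big(\sum_\mu c(\mu,\lambda)\beta_\mu\big)$ is well defined as a distribution on $\h^*$ — the sum over $\mu$ is locally finite since the $\beta_\mu$ are supported on the orbits $H\mu$ which escape to infinity — and its Fourier transform is genuinely the product of the two Fourier transforms.

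For the second identity I would simply apply $\res_{\h^*}(k)$ to the first. The only thing to check is that rescaling is compatible with convolution in the needed sense, namely $\res_{\h^*}(k)(A\star B)=\big(\res_{\h^*}(k)A\big)\star\big(\res_{\h^*}(k)B\big)$ for $A,B$ with suitable support, which follows directly from the definition $\ll\res_{\h^*}(k)C,\varphi\rr=\ll C,\varphi(\cdot/k)\rr$ and a change of variables; with $B_{\g/\h}^k:=\res_{\h^*}(k)B_{\g/\h}$ this yields exactly the stated formula.

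The main (and only genuine) obstacle is bookkeeping rather than substance: making sure the convolution of the non-compactly-supported distribution $\sum_\mu c(\mu,\lambda)\beta_\mu$ with the compactly supported measure $B_{\g/\h}$ is legitimate and that passing to Fourier transforms is justified despite the lack of compact support on one factor. This is handled by the standard remark that convolution by a compactly supported distribution preserves temperedness and that $r_*\beta_\lambda$ is itself compactly supported (since $G\lambda$ is compact), so both sides are compactly supported distributions and it suffices to compare their Fourier transforms on all of $\h$, reducing the lemma to the pointwise identity $j_\g^{1/2}=j_{\g/\h}^{1/2}\,j_\h^{1/2}$ and the definition of the branching coefficients.
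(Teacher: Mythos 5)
Your proof is correct and is essentially the paper's proof: the authors likewise observe that the first identity is the Fourier transform of $\chi_\lambda(e^Y)j_\g^{1/2}(Y)=j_{\g/\h}^{1/2}(Y)\sum_{\mu}c(\mu,\lambda)\chi_\mu(e^Y)j_\h^{1/2}(Y)$ for $Y\in\h$, and obtain the second identity by applying $\res_{\h^*}(k)$. The only difference is that you spell out the factorization $j_\g^{1/2}\vert_\h=j_{\g/\h}^{1/2}\,j_\h^{1/2}$ and the support/convergence bookkeeping in more detail; note also that for fixed $\lambda\in A_G$ the sum over $\mu$ is actually finite (since $V_\lambda\vert_H$ is finite-dimensional), so the local-finiteness remark is even more elementary than you indicate.
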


\begin{proof}
The first identity follows immediately by Fourier transform of the formula:
$\chi_\lambda(e^Y)j_{\g}^{1/2}(Y)=j_{\g/\h}^{1/2}(Y)
\sum_{\mu\in A_H}\, c(\mu,\lambda)\,\chi_\mu(e^Y) j_{\h}^{1/2}(Y)$,  for $Y\in \h$.

We get the second identity by applying the operator $\res_{\h^*}(k)$ to the first one.
 \end{proof}

We now prove a similar descent formula. We write $\g=\h\oplus\q$ where $\q$ is $H$-invariant.
Let $g\in H$ and consider the function
$$
d(Y):=\chi_\lambda(ge^Y)j_{\g(g)}^{1/2}(Y)\det{}^{1/2}_{\g/\g(g)}(1-g^{-1} e^{-Y}),\quad Y\in \ggot(g).
$$

In one hand $d(Y)=\ll \beta(g,\lambda), e^{i\langle-,Y\rangle}\rr$ when $Y\in \g(g)$, and on the other hand we have
$$
d(Y)=j_{\g(g)/\h(g)}^{1/2}(Y)\det{}^{1/2}_{\q/\q(g)}(1-g^{-1}
e^{-Y})\sum_{\mu}c(\lambda,\mu)\ll \beta(g,\mu), e^{i\langle-,Y\rangle} \rr
$$
when $Y\in\hgot(g)$.

If $A$ is a compactly supported distribution on $\g(g)^*$, we still denote by $r_*(A)$ the push-forward distribution on $\h(g)^*$.
We denote by $B_{\g(g)/\h(g)}$ the compactly supported measure on $\h(g)^*$ which is the Fourier transform of the $H(g)$-invariant function $j_{\g(g)/\h(g)}^{1/2}$. We denote by $B_{\g(g)/\h(g)}^k:=\res_{\h(g)^*}(k)B_{\g(g)/\h(g)}$.
Let $C_{\q/\q(g)}$ be the Fourier transform of the $H(g)$-invariant function $Y\mapsto\det{}^{1/2}_{\q/\q(g)}(1-g^{-1}
e^{-Y})$. We denote by $C_{\q/\q(g)}^k:=\res_{\h(g)^*}(k)C_{\q/\q(g)}$.

The previous identities give the following result.

\begin{lemma}\label{rescaled1orbits-g}
Let $g\in H$. Then $\res_{\h(g)^*}(k)\Big(r_*\beta(g,\lambda)\Big)$ is equal to
$$
B_{\g(g)/\h(g)}^k\star C_{\q/\q(g)}^k\star\left(\res_{\h(g)^*}(k)
\Big(\sum_{\mu\in A_H} c(\mu,\lambda)\beta(g,\mu)\Big)\right).
$$
\end{lemma}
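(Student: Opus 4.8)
The plan is to mimic the proof of Lemma~\ref{rescaled1orbits}: I would pass to Fourier transforms, use that the Fourier transform of a push-forward under $r$ is a restriction and that the Fourier transform of a convolution is a product, and then apply the rescaling operator, which commutes with convolution. The key input is the function $d(Y)$ introduced just before the lemma, together with its two expressions: one valid on $\g(g)$, coming from the Kirillov-type identity (\ref{eq:kirillov-g}) applied to $\Ocal_\lambda$, and one valid on $\h(g)$, obtained from (\ref{eq:kirillov-g}) applied to each $\Ocal_\mu$, the branching identity $\chi_\lambda(ge^Y)=\sum_{\mu}c(\mu,\lambda)\chi_\mu(ge^Y)$ for $Y\in\h(g)$, and the multiplicativity of $Y\mapsto j_{\g(g)}^{1/2}(Y)$ and of $Y\mapsto\det{}^{1/2}_{\g/\g(g)}(1-g^{-1}e^{-Y})$ with respect to the $H(g)$-invariant splittings $\g(g)=\h(g)\oplus\q(g)$ and $\g/\g(g)\simeq\h/\h(g)\oplus\q/\q(g)$. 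Both of these expressions are already recorded in the paragraph preceding the lemma, so the remaining work is just to assemble them.

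First I would note that $\beta(g,\lambda)$, each $\beta(g,\mu)$, $B_{\g(g)/\h(g)}$ and $C_{\q/\q(g)}$ are all compactly supported (the coadjoint fixed-point sets $\Ocal_\lambda^g$, $\Ocal_\mu^g$ are compact, the last two measures by construction), so every convolution below makes sense and every Fourier transform is an honest function; hence an equality of Fourier transforms forces an equality of distributions. From the first expression, $\Fcal_{\g(g)^*}(\beta(g,\lambda))$ equals $d$ on $\g(g)$, so its restriction to $\h(g)$, which is $\Fcal_{\h(g)^*}(r_*\beta(g,\lambda))$, equals $d|_{\h(g)}$. From the second expression, $d|_{\h(g)}$ is the pointwise product of $\Fcal_{\h(g)^*}(B_{\g(g)/\h(g)})$, of $\Fcal_{\h(g)^*}(C_{\q/\q(g)})$ and of $\Fcal_{\h(g)^*}\big(\sum_{\mu}c(\mu,\lambda)\beta(g,\mu)\big)$, that is, the Fourier transform of their convolution. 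By Fourier inversion this yields the un-rescaled identity
$$
r_*\beta(g,\lambda)=B_{\g(g)/\h(g)}\star C_{\q/\q(g)}\star\Big(\sum_{\mu\in A_H}c(\mu,\lambda)\,\beta(g,\mu)\Big).
$$

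To finish I would apply $\res_{\h(g)^*}(k)$ to both sides. Since $\res_{\h(g)^*}(k)$ is push-forward under the additive map $\xi\mapsto\xi/k$ of $\h(g)^*$, it turns a convolution into the convolution of the images, and by the definitions $B^k_{\g(g)/\h(g)}=\res_{\h(g)^*}(k)B_{\g(g)/\h(g)}$ and $C^k_{\q/\q(g)}=\res_{\h(g)^*}(k)C_{\q/\q(g)}$ this reproduces the asserted formula exactly. I do not expect a genuine obstacle here: the only points requiring care are the bookkeeping of the $j^{1/2}$ and $\det{}^{1/2}$ factors under the two $H(g)$-invariant splittings — already handled in the paragraph preceding the lemma — the compatibility $\Fcal_{\h(g)^*}(r_*A)=\Fcal_{\g(g)^*}(A)\big|_{\h(g)}$ for the inclusion $\h(g)\hookrightarrow\g(g)$, which is the elementary fact recorded at the start of this section, and the commutation of rescaling with convolution, which is immediate since $\xi\mapsto\xi/k$ is a group homomorphism.
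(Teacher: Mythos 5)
Your proof is correct and follows essentially the same route the paper intends: the paper states the two expressions for $d(Y)$ — one on $\g(g)$ via the fixed-point Kirillov formula~(\ref{eq:kirillov-g}), one on $\h(g)$ via branching and the multiplicativity of $j^{1/2}$ and $\det^{1/2}$ — and then simply declares that ``the previous identities give the following result,'' which is precisely the Fourier-transform, restriction, convolution, and rescaling argument you spell out. Your added remarks on compact supports (so Fourier transforms are honest real-analytic functions and determine the distributions) and on $\res_{\h(g)^*}(k)$ being push-forward under a group homomorphism are exactly the bookkeeping the paper leaves implicit.
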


\medskip

\subsection{{Functoriality relatively to restrictions}}

Here we consider an oriented spin manifold $M$ of even dimension with an action of a compact connected Lie group $G$. Let $\Lcal\to M$ be a $G$-equivariant line
bundle equipped with an invariant Hermitian connection $\nabla$. We assume here that $\phi_G$ is proper.
In this case we can define $\Qfor_G(M,\Lcal^{k})\in \Rfor(G)$ for any $k\geq 1$.

The main result of this section is the following theorem.

\begin{theo}\label{theo:formal}
Suppose that $H\subset G$ is a closed connected subgroup such that  $\phi_H$ is proper. Then
\begin{enumerate}
\item $\Qfor_G(M,\Lcal^{k})$ is $H$-admissible,
\item $\Qfor_G(M,\Lcal^{k})\vert_H=\Qfor_H(M,\Lcal^{k})$.
\end{enumerate}
\end{theo}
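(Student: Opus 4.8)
The plan is to reduce both statements to the asymptotic descent formula for the distributions $\Theta_k$ together with the rigidity of piecewise quasi-polynomial multiplicities, exactly along the lines sketched in the introduction for the torus case. First I would dispose of admissibility: since $\phi_H$ is proper, the multiplicity function $\mu\mapsto \sum_{\lambda\in A_G,\ \lambda|_{\h}\sim\mu} m_G(\lambda,k)\,c(\mu,\lambda)$ that computes the $H$-isotypical components of $\Qfor_G(M,\Lcal^k)|_H$ is supported, for each fixed $k$, in the proper image $\phi_H(M)$; combined with the at-most-polynomial growth of $m_G(\lambda,k)$ and the finiteness of $c(\mu,\lambda)$ on compact sets, this shows each $H$-multiplicity is finite and the restriction converges in $\hat R(H)$. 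So $\Qfor_G(M,\Lcal^k)|_H$ makes sense and defines $m_H'(\mu,k):=\sum_{\lambda} c(\mu,\lambda)\,m_G(\lambda,k)$, a function which (being a locally finite sum of piecewise quasi-polynomials times the quasi-polynomial restriction multiplicities, cf.\ Theorem \ref{theo:m-G-piecewise} and \cite{pep-vergne:asymptotics}) lies in $\Scal(\tilde\Lambda_H)$.

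Next I would compare asymptotic expansions. Form $S_k^H:=k^{r_H}\sum_{\mu\in A_H} m_H'(\mu,k)\,\beta_{\mu/k}$ and $\Theta_k^H:=k^{r_H}\sum_{\mu\in A_H} m_H(\mu,k)\,\beta_{\mu/k}$. Using Lemma \ref{rescaled1orbits} (the push-forward formula $r_*(\beta_\lambda)=B_{\g/\h}\star(\sum_\mu c(\mu,\lambda)\beta_\mu)$ and its rescaled version), one identifies $S_k^H$ with $B_{\g/\h}^k\star r_*\Theta_k^{(M,\Lcal)}$ up to the elementary reindexing $A_G\to A_H$. By Theorem \ref{asympt1-noncompact}, $\Theta_k^{(M,\Lcal)}\equiv j_\g^{1/2}(i\partial/k)\big(k^d\sum_n k^{-n} DH^G(M,\Omega,\Ahat_n(M))\big)$; pushing forward and using $r_*DH^G(M,\Omega,\nu)=DH^H(M,\Omega,\nu)$ and $\res_{\h^*}(k)\circ r_*=r_*\circ\res_{\g^*}(k)$, together with Proposition \ref{prop:asymptotic-quasi-polynomial-star-B} applied to $\rm j=j_{\g/\h}^{1/2}$ and the factorization $j_\g^{1/2}=j_{\g/\h}^{1/2}\,j_\h^{1/2}$, gives $S_k^H\equiv j_\h^{1/2}(i\partial/k)\big(k^d\sum_n k^{-n} DH^H(M,\Omega,\Ahat_n(M))\big)$. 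On the other hand Theorem \ref{asympt1-noncompact} applied directly to $H$ gives exactly the same expansion for $\Theta_k^H$. Hence $S_k^H$ and $\Theta_k^H$ have identical asymptotic expansions.

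Equality of asymptotic expansions is not yet equality of the sequences, so the final step is the rigidity argument. Both $m_H'$ and $m_H$ are piecewise quasi-polynomial, so $q:=m_H'-m_H\in\Scal(\tilde\Lambda_H)$ and the associated distribution $\Theta_k^H(q)=S_k^H-\Theta_k^H$ is $O(k^{-\infty})$. To conclude $q=0$ I would invoke the vanishing criterion of \cite{pep-vergne:asymptotics}: it suffices to check that for every finite-order $\zeta\in T_H$ the twisted distribution $k^{r_H}\sum_\mu q(\mu,k)\zeta^\mu\delta_{\mu/k}$ is also $O(k^{-\infty})$. This is where the localized companions are needed: Theorem \ref{asympt1-noncompact-g} (for $H$) and its pushed-forward analogue obtained from Theorem \ref{asympt1-noncompact-g} for $G$ via Lemma \ref{rescaled1orbits-g} give matching asymptotic expansions for the $\zeta$-twisted sums built from $m_H(\mu,k)$ and from $m_H'(\mu,k)$ respectively — here the extra factors $\det^{1/2}_{\q/\q(g)}(1-g^{-1}e^{-i\partial/k})$ and $j_{\g(g)/\h(g)}^{1/2}$ produced by Lemma \ref{rescaled1orbits-g} precisely account for the discrepancy between $DH^{G(g)}$ and $DH^{H(g)}$ restricted to $\h(g)$, by the same push-forward property of twisted Duistermaat--Heckman distributions. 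Thus all $\zeta$-twisted sums of $q$ vanish to infinite order, the criterion yields $q\equiv 0$, i.e.\ $m_H'(\mu,k)=m_H(\mu,k)$ for all $(\mu,k)$, which is statement (2); statement (1) was already obtained along the way. I expect the main obstacle to be the bookkeeping in this last step: verifying that the equivariant-form factors coming from Lemma \ref{rescaled1orbits-g} on the orbit side exactly match the normal-bundle contributions $D_g(M)$, $V_g(n,k)$ on the manifold side so that the pushed-forward $G$-expansion coincides term by term with the intrinsic $H$-expansion of Theorem \ref{asympt1-noncompact-g}.
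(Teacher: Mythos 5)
Your proposal follows essentially the same route as the paper: compare asymptotic expansions of $S_k^H$ and $\Theta_k^H$ via the push-forward $r_*$ and Theorem~\ref{asympt1-noncompact}, then pass to $g$-twisted versions (Theorem~\ref{asympt1-noncompact-g} together with Lemma~\ref{rescaled1orbits-g}) so as to invoke the vanishing criterion of \cite{pep-vergne:asymptotics}, with admissibility coming from properness of $\phi_H$ and the support constraint $m_G(\lambda,k)\neq0\Rightarrow\lambda/k\in\phi_G(M)$. One small slip to note: Lemma~\ref{rescaled1orbits} gives $r_*\Theta^G_k = B^k_{\g/\h}\star S^H_k$, not $S^H_k = B^k_{\g/\h}\star r_*\Theta^G_k$, so extracting the expansion of $S^H_k$ requires formally inverting $j_{\g/\h}^{1/2}(i\partial/k)$ (using that its Taylor series has leading coefficient $1$), which is exactly the cancellation step carried out in the paper's Proposition~\ref{coro:g=1}.
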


\bigskip

We will prove Theorem \ref{theo:formal} under the assumption that the generic infinitesimal stabilizer of the $G$-action on $M$ is abelian.
It can be shown easily that it implies the general case (see Lemma 4.2 in \cite{pep:formal3}).

Let us denote by $c(\lambda,\mu)$ the multiplicity of $V_\mu$ in $V_\lambda\vert_H$. If we consider the generalized characters
$\Qfor_G(M,\Lcal^{k})=\sum_{\lambda\in\hat{G}}\,m_G(\lambda,k)\, V_\lambda$ and
$\Qfor_H(M,\Lcal^{k})=\sum_{\mu\in\hat{H}}\,m_H(\mu,k)\, V_\mu$,
we see that Theorem \ref{theo:formal} is equivalent to the following theorem.

\begin{theo}\label{theo:formal-2}
For any $\mu\in A_H$, we have
$$
m_H(\mu,k)=\sum_{\lambda\in A_G}\,m_G(\lambda,k)\, c(\lambda,\mu),
$$
where the right hand side is a finite sum.
\end{theo}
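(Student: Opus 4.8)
The plan is to deduce Theorem \ref{theo:formal-2} from the asymptotic expansions already established, together with the rigidity of piecewise quasi-polynomial functions expressed in Proposition \ref{prop:asymptotic-quasi-polynomial} and the companion result from \cite{pep-vergne:asymptotics}. Concretely, set $m_H'(\mu,k) = \sum_{\lambda\in A_G} m_G(\lambda,k)\, c(\lambda,\mu)$, which is a finite sum for each $\mu$ because $c(\lambda,\mu)\neq 0$ forces $\lambda$ to lie in a bounded region, and $m_G(\lambda,k)$ is supported where $\lambda/k\in\phi_G(M)$. I want to prove $m_H'(\mu,k)=m_H(\mu,k)$ as functions on $A_H\times\Z_{>0}$. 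The key point is that both are piecewise quasi-polynomial functions of $(\mu,k)$: for $m_H$ this is Theorem \ref{theo:m-G-piecewise} applied to $H$ (using that the generic infinitesimal stabilizer of the $G$-action, hence a fortiori of the $H$-action after the reduction in \cite{pep:formal3}, is abelian), and for $m_H'$ it follows since $m_G\in\Scal(\tilde\Lambda)$ and the fusion coefficients $c(\lambda,\mu)$ are themselves piecewise quasi-polynomial in $(\lambda,\mu)$ (they are vector partition functions), so the finite convolution stays in the class.

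\textbf{Matching the asymptotics.} First I would form $S_k^H := k^{r_H}\sum_{\mu\in A_H} m_H'(\mu,k)\,\beta_{\mu/k}^H$ and identify it, via Lemma \ref{rescaled1orbits}, with the push-forward under $r:\g^*\to\h^*$ of the convolution describing $r_*(\beta_\lambda)$; more precisely, using the first identity of Lemma \ref{rescaled1orbits} and the compatibility $\res_{\h^*}(k)\circ r_* = r_*\circ\res_{\g^*}(k)$, one gets
$$
S_k^H \;=\; B_{\g/\h}^k \star \bigl(r_* \Theta^{(M,\Lcal)}_k\bigr).
$$
Now apply the push-forward compatibility of twisted Duistermaat--Heckman distributions, $r_* DH^G(M,\Omega,\nu) = DH^H(M,\Omega,\nu)$, together with Theorem \ref{asympt1-noncompact} and Proposition \ref{prop:asymptotic-quasi-polynomial-star-B} (with ${\rm j} = j_{\g/\h}^{1/2}$, whose Fourier transform $B_{\g/\h}$ is a compactly supported measure). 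Since $j_\g^{1/2} = j_{\g/\h}^{1/2}\cdot (j_\h^{1/2}\circ{\rm restriction})$ as invariant analytic functions near $0$, the operator $j_{\g/\h}^{1/2}(i\partial/k)$ applied to $j_\h^{1/2}(i\partial/k)\,k^d\sum_n k^{-n} DH^H(M,\Omega,\Ahat_n(M))$ reconstructs exactly $j_\h^{1/2}(i\partial/k)$ composed with the full Duflo factor on the $H$-side in the wrong... no: it reconstructs $j_\g^{1/2}(i\partial/k)$ restricted, which by Theorem \ref{asympt1-noncompact} for $H$ is precisely the asymptotic expansion of $\Theta_k^{H} := k^{r_H}\sum_{\mu} m_H(\mu,k)\beta_{\mu/k}^H$. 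Hence $S_k^H$ and $\Theta_k^H$ have the same asymptotic expansion, which gives $m_H'(\mu,k) - m_H(\mu,k)$ asymptotic descent to $0$.

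\textbf{From asymptotics to equality.} Equality of asymptotic expansions alone does not force $m_H' = m_H$, since a piecewise quasi-polynomial supported, say, on a lower-dimensional set contributes nothing to the leading-order Liouville measures. To close the gap I would invoke the localized version: apply Theorem \ref{asympt1-noncompact-g} (the $g$-twisted analogue) and Lemma \ref{rescaled1orbits-g} to show that for every $g\in H$ of finite order the distributions $\sum_\mu m_H'(\mu,k)\,\zeta^\mu\delta_{\mu/k}$ and $\sum_\mu m_H(\mu,k)\,\zeta^\mu\delta_{\mu/k}$ (with $\zeta=\tilde g^{\,\bullet}$ the relevant root-of-unity character) also have the same asymptotic expansion, i.e. their difference is $O(k^{-\infty})$. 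Then the rigidity lemma from \cite{pep-vergne:asymptotics} recalled in the introduction — a piecewise quasi-polynomial $q$ on $\tilde\Lambda_H$ all of whose $\zeta$-twisted Dirac-comb sums are $O(k^{-\infty})$ must vanish — applied to $q = m_H' - m_H$, yields $m_H'(\mu,k) = m_H(\mu,k)$ identically.

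\textbf{Main obstacle.} The step I expect to be most delicate is the bookkeeping in the $g$-twisted comparison: one must match the extra determinantal factors $\det^{1/2}_{\g/\g(g)}(1-g^{-1}e^{-i\partial/k})$ appearing in Theorem \ref{asympt1-noncompact-g} against the product $\det^{1/2}_{\g(g)/\h(g)}$-type factors and $C_{\q/\q(g)}$ coming from Lemma \ref{rescaled1orbits-g}, and verify that the geometric side organizes as $DH^{H(g)}$ of the fixed-point data via the push-forward $r: \g(g)^*\to\h(g)^*$ — i.e. that $(M^g)$ for the $G$-fixed points and the $H$-fixed points agree (they do, since $g\in H$) and that the equivariant forms $V_g(n,k)$ are compatible with restriction. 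Once these identities of generalized functions on $\h(g)$ are in hand, Proposition \ref{prop:asymptotic-quasi-polynomial-star-B} applied to each ${\rm j}$ factor closes the argument mechanically.
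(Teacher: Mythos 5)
Your overall architecture matches the paper's: form $m_H'(\mu,k)=\sum_\lambda m_G(\lambda,k)c(\lambda,\mu)$, establish that the resulting distributions have the same asymptotic expansion as $\Theta_k^H$ by pushing forward the Duistermaat--Heckman data, then upgrade the comparison to $g$-twisted Dirac combs for $g\in H$ of finite order, and invoke the rigidity lemma from \cite{pep-vergne:asymptotics}. You also correctly identify the $g$-twisted determinantal bookkeeping as the technically delicate step. However, there is a concrete error in your convolution identity. You write
$$
S_k^H \;=\; B_{\g/\h}^k \star \bigl(r_* \Theta^{(M,\Lcal)}_k\bigr),
$$
but the first identity of Lemma \ref{rescaled1orbits}, $r_*(\beta_\lambda)=B_{\g/\h}\star\bigl(\sum_\mu c(\mu,\lambda)\beta_\mu\bigr)$, summed against $m_G(\lambda,k)$ and rescaled, gives the \emph{opposite} direction: $r_*\Theta_k^G = B_{\g/\h}^k\star S_k^H$. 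With this correction your claim ``$S_k^H$ and $\Theta_k^H$ have the same asymptotic expansion'' no longer follows directly from the push-forward compatibility of $DH$-distributions; what one obtains is $B_{\g/\h}^k\star(S_k^H - \Theta_k^H)\equiv 0$, and one must then \emph{deconvolute}. This is not automatic: it uses that $S_k^H-\Theta_k^H$ comes from a piecewise quasi-polynomial $d(\mu,k)$, so Proposition \ref{prop:asymptotic-quasi-polynomial-star-B} converts $B_{\g/\h}^k\star(\cdot)$ into $j_{\g/\h}^{1/2}(i\partial/k)(\cdot)$ acting on the asymptotic series, and then the constant term $d_0=1$ of the Taylor expansion of $j_{\g/\h}^{1/2}$ lets one peel off the coefficients $\theta_n(k)$ recursively to conclude they all vanish. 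This deconvolution is precisely the content of Proposition \ref{coro:g=1} in the paper, and your proposal glosses over it. The same caveat applies in the $g$-twisted comparison, where additionally the factors $B_{\g(g)/\h(g)}^k$ and $C_{\q/\q(g)}^k$ from Lemma \ref{rescaled1orbits-g} must be inverted in the same way. Once the convolution direction is fixed and the deconvolution is carried out explicitly, your argument agrees with the paper's.
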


We consider the sequences of distributions
$\Theta^G_k =k^r\sum_{\lambda}\,m_G(\lambda,k)\, \beta_{\lambda/k}$ and
$\Theta^H_k = k^{r'}\sum_{\mu}\,m_H(\mu,k)\, \beta_{\mu/k}$ associated to the 
formal geometric quantizations
$\Qfor_G(M,\Lcal^{k})$ and $\Qfor_H(M,\Lcal^{k})$.

We start with the following proposition.

\begin{prop}
$\bullet$ We have $B^k_{\g/\h}\star \Theta^H_k\equiv j_{\g/\h}^{1/2}(i\partial/k)\Theta^H_k$.

$\bullet$ The push-forward $r_*(\Theta^G_k)$ is a well defined sequence of distributions on $\hgot^*$.
We have
$$
r_*(\Theta^G_k)\equiv j_{\g/\h}^{1/2}(i\partial/k)\Theta^H_k.
$$
\end{prop}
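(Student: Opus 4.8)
The plan is to establish the two bullets separately and then combine. For the first bullet, I would apply Proposition~\ref{prop:asymptotic-quasi-polynomial-star-B} directly with the choice $\mathrm{j}=j_{\g/\h}^{1/2}$: by Theorem~\ref{theo:m-G-piecewise} (and the extension of Theorem~\ref{theo:QR-multiplicities} to the non-compact case, which holds since the generic infinitesimal stabilizer is abelian) the multiplicity function $m_H$ lies in $\Scal(\tilde\Lambda)$, so $\Theta^H_k=\Theta_k(m_H)$ in the notation of Section~2.5. Since the Fourier transform of the invariant analytic function $j_{\g/\h}^{1/2}$ is the compactly supported measure $B_{\g/\h}$, and $B^k_{\g/\h}=\res_{\hgot^*}(k)B_{\g/\h}$, Proposition~\ref{prop:asymptotic-quasi-polynomial-star-B} yields exactly $B^k_{\g/\h}\star\Theta^H_k\equiv j_{\g/\h}^{1/2}(i\partial/k)\,\mathrm{AS}_k(m_H)$, and $\mathrm{AS}_k(m_H)$ is the asymptotic expansion of $\Theta^H_k$ itself.

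For the second bullet, I would first argue that $r_*(\Theta^G_k)$ is well defined: $\Theta^G_k=k^r\sum_\lambda m_G(\lambda,k)\beta_{\lambda/k}$ and, since $\phi_G$ is proper, $m_G(\lambda,k)\neq0$ forces $\lambda/k\in\phi_G(M)$; applying $r$ and using that $r\circ\phi_G=\phi_H$ is proper shows that for any test function $\varphi$ on $\hgot^*$ only finitely many orbits $G\lambda/k$ meet $r^{-1}(\supp\varphi)$, so $\langle r_*\Theta^G_k,\varphi\rangle$ is a finite sum. Next, using the rescaled push-forward formula for Liouville measures of admissible orbits, Lemma~\ref{rescaled1orbits} gives
$$
\res_{\hgot^*}(k)\big(r_*\beta_\lambda\big)=B^k_{\g/\h}\star\Big(\res_{\hgot^*}(k)\sum_{\mu\in A_H}c(\mu,\lambda)\beta_\mu\Big),
$$
and since $r_*$ commutes with rescaling, summing against $k^r m_G(\lambda,k)$ (noting $r=\dim(G/T)/2$, $r'=\dim(H/T_H)/2$, and the powers of $k$ match up correctly because $B^k_{\g/\h}$ absorbs the discrepancy — this is the bookkeeping point to check carefully) gives
$$
r_*(\Theta^G_k)=B^k_{\g/\h}\star\Big(k^{r'}\sum_{\mu\in A_H}\Big(\sum_\lambda m_G(\lambda,k)c(\mu,\lambda)\Big)\beta_{\mu/k}\Big)=B^k_{\g/\h}\star\Theta'^H_k,
$$
where $\Theta'^H_k:=\Theta_k(m_H')$ with $m_H'(\mu,k):=\sum_\lambda m_G(\lambda,k)c(\mu,\lambda)$. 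One must observe that $m_H'\in\Scal(\tilde\Lambda)$ (it is a locally finite combination of the piecewise quasi-polynomial $m_G$ with the branching coefficients, which are themselves quasi-polynomial in $\mu$ by Kostant's formula, and the support condition from properness of $\phi_H$ makes the sum locally finite). Now I apply Proposition~\ref{prop:asymptotic-quasi-polynomial-star-B} again, this time to $\Theta'^H_k$, to get $r_*(\Theta^G_k)\equiv j_{\g/\h}^{1/2}(i\partial/k)\,\mathrm{AS}_k(m_H')$. Finally, by Theorem~\ref{asympt1-noncompact} applied to both $G$ and $H$, together with the push-forward formula $r_*DH^G(M,\Omega,\hat A_n(M))=DH^H(M,\Omega,\hat A_n(M))$ and the fact that $r_*$ intertwines $j_\g^{1/2}(i\partial/k)$ with $j_\h^{1/2}(i\partial/k)$ after accounting for the $j_{\g/\h}^{1/2}$ factor, one checks $\mathrm{AS}_k(m_H')=\mathrm{AS}_k(m_H)$, i.e. $j_{\g/\h}^{1/2}(i\partial/k)\mathrm{AS}_k(m_H')=j_{\g/\h}^{1/2}(i\partial/k)\mathrm{AS}_k(m_H)=$ the asymptotic expansion of $r_*(\Theta^G_k)$, which is the claimed relation $r_*(\Theta^G_k)\equiv j_{\g/\h}^{1/2}(i\partial/k)\Theta^H_k$.

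The main obstacle I anticipate is the careful verification that all the powers of $k$ and normalizations of Liouville measures line up — in particular reconciling the $k^r$ versus $k^{r'}$ prefactors through the convolution with $B^k_{\g/\h}$, and confirming that $m_H'$ genuinely belongs to $\Scal(\tilde\Lambda)$ so that Proposition~\ref{prop:asymptotic-quasi-polynomial-star-B} applies to it (this needs the branching multiplicities $c(\mu,\lambda)$ to be quasi-polynomial in suitable cones and the sum over $\lambda$ to be locally finite, which is where properness of $\phi_H$ enters). The identification of the two asymptotic series at the level of $\mathrm{AS}_k$ is then a formal consequence of Theorem~\ref{asympt1-noncompact} and the functoriality of the twisted Duistermaat--Heckman distributions under $r_*$, and it is precisely this identity that will be fed, in the next step of the paper, into the uniqueness statement (Proposition~\ref{prop:asymptotic-quasi-polynomial-star-B}'s companion for finite-order elements) to conclude $m_H=m_H'$, i.e. Theorem~\ref{theo:formal-2}.
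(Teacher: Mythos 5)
Your first bullet matches the paper exactly: one identifies $\Theta^H_k=\Theta_k(m_H)$, invokes $m_H\in\Scal(\tilde\Lambda)$ from Theorem~\ref{theo:m-G-piecewise}, and applies Proposition~\ref{prop:asymptotic-quasi-polynomial-star-B}. Your well-definedness argument for $r_*(\Theta^G_k)$ in the second bullet is also the paper's: properness of $\phi_H$ makes $\Kcal_\varphi=\phi_G(\phi_H^{-1}(\supp\varphi))$ compact, so only finitely many $\lambda$ contribute.

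For the asymptotic identity $r_*(\Theta^G_k)\equiv j_{\g/\h}^{1/2}(i\partial/k)\Theta^H_k$, however, you take a detour that doesn't actually work as an independent route. You pass through Lemma~\ref{rescaled1orbits} to write $r_*(\Theta^G_k)=B^k_{\g/\h}\star\Theta_k(m_H')$ and then want to apply Proposition~\ref{prop:asymptotic-quasi-polynomial-star-B} to $\Theta_k(m_H')$. This requires $m_H'\in\Scal(\tilde\Lambda)$, which is \emph{not} established at this stage of the argument (your appeal to ``branching coefficients quasi-polynomial + properness'' is a plausibility remark, not a proof). More seriously, your final step reduces to checking $\mathrm{AS}_k(m_H')=\mathrm{AS}_k(m_H)$, and to verify that you end up having to compute the asymptotic expansion of $r_*(\Theta^G_k)$ directly anyway; the whole detour buys you nothing and is logically one hair away from circular. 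The Lemma~\ref{rescaled1orbits}/$m_H'$ machinery is precisely what the paper deploys in the \emph{next} step (Proposition~\ref{coro:g=1}) to deduce $B^k_{\g/\h}\star\big(\sum_\mu d(\mu,k)\beta_{\mu/k}\big)\equiv 0$, not here.

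What the paper actually does is a direct push-forward of the asymptotic expansion: given a test function $\varphi$ on $\h^*$, it chooses a cutoff $\epsilon\in\Ccal^\infty_c(\g^*)$ equal to $1$ on a ball containing $\Kcal_\varphi$, sets $\tilde\varphi=(\varphi\circ r)\cdot\epsilon$, and writes $\ll r_*\Theta^G_k,\varphi\rr=\ll\Theta^G_k,\tilde\varphi\rr$. Applying Theorem~\ref{asympt1-noncompact} to the right-hand side and then the key identity
$$
\ll DH^G(M,\Omega,\hat A_n(M)),\,P(\partial)\tilde\varphi\rr=\ll DH^H(M,\Omega,\hat A_n(M)),\,P\vert_\h(\partial)\varphi\rr
$$
(valid for any polynomial $P$, which is the precise form of $r_*DH^G=DH^H$ needed here) produces $r_*\Theta^G_k\equiv j_\g^{1/2}(i\partial/k)\vert_\h\,k^d\sum_n k^{-n}DH^H(M,\Omega,\hat A_n(M))$; combining with Theorem~\ref{asympt1-noncompact} for $H$ and $j_\g^{1/2}\vert_\h=j_{\g/\h}^{1/2}\,j_\h^{1/2}$ gives the claim. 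This cutoff construction is the step your proposal glosses over: without it, ``pushing forward an asymptotic expansion'' is not a licensed operation, because $\varphi\circ r$ is not compactly supported on $\g^*$.
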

\begin{proof}
Since $m_H(\mu,k)$ is a piecewise quasi-polynomial function (see Section \ref{sec:quasi-polynomial-multiplicity}),
the first point is a consequence of Proposition \ref{prop:asymptotic-quasi-polynomial-star-B}.

\medskip

We now prove the second point. Let $\varphi\in\Ccal^{\infty}(\h^*)$ be a function with compact support. Let $R_\varphi>0$ such that the compact set
$\Kcal_\varphi:=\phi_G(\phi_H^{-1}({\rm Support}(\varphi))$ is contained in $\{\xi\in \g^*,\, \|\xi\|\leq R_\varphi\}$.
The expression
$$
\ll r_*(\Theta^G_k),\varphi \rr:= k^r\sum_{\lambda} m_G(\lambda,k)\,\varphi\left(r(\lambda/k)\right)
$$
is well-defined since the term $m_G(\lambda,k)\,\varphi\left(r(\lambda/k)\right)$ is non-zero only for a finite number of $\lambda$. More precisely,
since $m_G(\lambda,k)\neq 0$ only if $\lambda/k\in \phi_G(M)$, we see that $m_G(\lambda,k)\,\varphi\left(r(\lambda/k)\right)\neq 0$ only
if $\lambda/k\in \Kcal_\varphi$.

If $\epsilon\in\Ccal^{\infty}(\g^*)$ is a function with compact support such that $\epsilon(\xi)=1$ if $\|\xi\|\leq R_\varphi$, we define
$\tilde{\varphi}:= \varphi\circ r\times\epsilon $. We see that
$$
\ll r_*(\Theta^G_k),\varphi \rr = \ll\Theta^G_k, \tilde{\varphi} \rr
= k^{d}\sum_{n=0}^N\frac{1}{k^n}\ll \theta^G_n,\tilde{\varphi} \rr + o(k^{d-N})
$$
where $(\theta^G_n)$ is the family of distributions on $\g^*$ such that
$$
j_{\ggot}^{1/2}(i\partial/k)\sum_{n=0}^{\infty}\frac{1}{k^n}DH^G(M,\Omega,\hat{A}_n(M))=\sum_{n=0}^{\infty}\frac{1}{k^n} \theta^G_n.
$$
We check easily that, for any polynomial $P$ on $\g$, we have
$$
\ll DH^G(M,\Omega,\hat{A}_n(M)),P(\partial)\tilde{\varphi}\rr=
\ll DH^H(M,\Omega,\hat{A}_n(M)),P\vert_\h(\partial)\varphi\rr .
$$
Hence the formal series
$\ll j_{\ggot}^{1/2}(i\partial/k)\sum_{n=0}^{\infty}\frac{k^d}{k^n}DH^G(M,\Omega,\hat{A}_n(M)),\tilde{\varphi}\rr$
is equal to
$$
\ll j_{\g}^{1/2}(i\partial/k)\sum_{n=0}^{\infty}\frac{k^d}{k^n}DH^H(M,\Omega,\hat{A}_n(M)),\varphi \rr
\equiv \ll j_{\g/\h}^{1/2}(i\partial/k)\Theta^H_k,\varphi\rr .
$$
\end{proof}

Consider the function $m'_H(\mu,k):= \sum_{\lambda\in A_G}\,m_G(\lambda,k)\, c(\lambda,\mu)$
and the difference $d(\mu,k):=m_H(\mu,k)-m'_H(\mu,k)$. Our aim is to show that $d=0$.

\medskip

\begin{proposition}\label{coro:g=1}
We have $\res_{\h^*}(k)\Big(\sum_{\mu\in A_H} d(\mu,k) \beta_\mu \Big)\equiv 0$.
\end{proposition}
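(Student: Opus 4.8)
The plan is to show that the push-forward $r_*\Theta^G_k$ and the convolution $B^k_{\g/\h}\star\Theta^H_k$ have the same asymptotic expansion, to deduce from this that $B^k_{\g/\h}\star\Theta_k(d)\equiv 0$, and then to ``deconvolve'' by $B^k_{\g/\h}$ using the fact that $j_{\g/\h}^{1/2}(0)=1$.

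\medskip

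First I would compute $r_*\Theta^G_k$ explicitly. Writing $\Theta^G_k=\res_{\g^*}(k)\big(\sum_{\lambda\in A_G}m_G(\lambda,k)\beta_\lambda\big)$, using the commutation $\res_{\h^*}(k)\circ r_*=r_*\circ\res_{\g^*}(k)$, and using the first identity of Lemma \ref{rescaled1orbits} together with the elementary fact that $\res_{\h^*}(k)$ turns convolution into convolution (immediate on Fourier transforms), one obtains $r_*\Theta^G_k=B^k_{\g/\h}\star S^H_k$, where $S^H_k:=k^{r'}\sum_{\mu\in A_H}m'_H(\mu,k)\beta_{\mu/k}$ and $m'_H(\mu,k)=\sum_{\lambda\in A_G}m_G(\lambda,k)c(\lambda,\mu)$; the push-forward is well defined because $m_G(\lambda,k)\neq 0$ forces $\lambda/k\in\phi_G(M)$ and $r$ is proper on $\phi_G(M)$. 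Next I invoke the preceding Proposition, which gives both $B^k_{\g/\h}\star\Theta^H_k\equiv j_{\g/\h}^{1/2}(i\partial/k)\Theta^H_k$ and $r_*\Theta^G_k\equiv j_{\g/\h}^{1/2}(i\partial/k)\Theta^H_k$. Comparing, $B^k_{\g/\h}\star\Theta^H_k\equiv B^k_{\g/\h}\star S^H_k$, hence by linearity $B^k_{\g/\h}\star\Theta_k(d)\equiv 0$, where $d=m_H-m'_H$ and $\Theta_k(d):=k^{r'}\sum_\mu d(\mu,k)\beta_{\mu/k}=\res_{\h^*}(k)\big(\sum_\mu d(\mu,k)\beta_\mu\big)$.

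\medskip

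It then remains to deconvolve. Both $m_H$ and $m'_H$ are piecewise quasi-polynomial: for $m_H$ this is the version of Theorem \ref{theo:m-G-piecewise} for $H$, valid since the generic infinitesimal $H$-stabilizer, being the intersection of $\h$ with the generic infinitesimal $G$-stabilizer, is abelian under our standing assumption, and since Theorem \ref{theo:QR-multiplicities} extends to the proper-moment-map setting; for $m'_H$ it follows from the piecewise quasi-polynomiality of the branching multiplicities $c(\lambda,\mu)$ and the stability of the space of piecewise quasi-polynomial functions under the locally finite sum $\sum_\lambda m_G(\cdot,\cdot)c(\cdot,\mu)$. Hence $d$ is piecewise quasi-polynomial, so $\Theta_k(d)$ admits an asymptotic expansion ${\rm AS}_k(d)$, and Proposition \ref{prop:asymptotic-quasi-polynomial-star-B}, applied with ${\rm j}=j_{\g/\h}^{1/2}$ (whose Fourier transform $B_{\g/\h}$ is the required compactly supported measure), yields $B^k_{\g/\h}\star\Theta_k(d)\equiv j_{\g/\h}^{1/2}(i\partial/k)\,{\rm AS}_k(d)$. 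Combined with the previous step, $j_{\g/\h}^{1/2}(i\partial/k)\,{\rm AS}_k(d)\equiv 0$. Since $j_{\g/\h}^{1/2}(0)=\det_{\g/\h}^{1/2}(I)=1$, the operator $j_{\g/\h}^{1/2}(i\partial/k)$ equals $\Id$ plus a formal series of differential operators in $1/k$, hence is invertible on formal Laurent series in $1/k$; applying its inverse term by term forces ${\rm AS}_k(d)=0$, whence $\Theta_k(d)\equiv 0$, i.e. $\res_{\h^*}(k)\big(\sum_{\mu\in A_H}d(\mu,k)\beta_\mu\big)\equiv 0$, which is the claim.

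\medskip

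The main obstacle I expect is the piecewise quasi-polynomiality of $m'_H$ (and hence of $d$): without it one cannot even assert that $\Theta_k(d)$ possesses an asymptotic expansion, so the deconvolution step would have nothing to act on. Everything else is bookkeeping with $\res$, $r_*$, $\star$, and the invertibility of a formal operator whose leading term is the identity.
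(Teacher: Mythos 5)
Your proof is correct and follows essentially the same route as the paper's: you establish $B^k_{\g/\h}\star\Theta_k(d)\equiv 0$ by combining the preceding Proposition with the exact identity $r_*\Theta^G_k=B^k_{\g/\h}\star S^H_k$ from Lemma \ref{rescaled1orbits}, invoke the piecewise quasi-polynomiality of $d$ to get an asymptotic expansion via Proposition \ref{prop:asymptotic-quasi-polynomial}, and then deconvolve using Proposition \ref{prop:asymptotic-quasi-polynomial-star-B} together with the fact that $j_{\g/\h}^{1/2}$ has constant term $1$. Your formulation of the deconvolution step as ``inverting a formal operator whose leading term is the identity'' is a clean restatement of the paper's inductive argument $\sum_{m+l=n}d_m(i\partial)\theta_l(k)=0\Rightarrow\theta_n(k)=0$, and your extra discussion of why $m'_H$ (hence $d$) is piecewise quasi-polynomial makes explicit a point the paper leaves implicit.
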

\begin{proof}  The previous proposition tells us that
$$
r_*(\Theta^G_k)\equiv B^k_{\g/\h}\star  \res_{\hgot^*}(k)\Big(\sum_{\mu\in A_H}m_H(\mu,k)\,\beta_{\mu}\Big).
$$
If we use Lemma \ref{rescaled1orbits}, we can compute $r_*(\Theta^G_k)$ in another manner:
\begin{eqnarray*}
r_*(\Theta^G_k) &=& \res_{\h^*}(k)\Big(\sum_{\lambda\in A_G} m_G(\lambda,k) r_*(\beta_\lambda)\Big)\\
&=&B^k_{\g/\h}\star \res_{\hgot^*}(k)\Big(\sum_{\mu\in A_H}m_H'(\mu,k)\,\beta_{\mu}\Big).
\end{eqnarray*}
At this stage we obtain that $B^k_{\g/\h}\star \res_{\h^*}(k)\big(\sum_{\mu\in A_H} d(\mu,k) \beta_\mu \big)\equiv 0$, or equivalently $B^k_{\g/\h}\star \big(\sum_{\mu\in A_H} d(\mu,k) \beta_{\mu/k} \big)\equiv 0$. Since the function
$d(\mu,k)$ is a piecewise quasi-polynomial function, we know that $\Theta_k:=\sum_{\mu\in A_H} d(\mu,k) \beta_{\mu/k}$ admits an asymptotic expansion
$$
\Theta_k\equiv k^{n_o}\sum_{n=0}^\infty k^{-n}\theta_n(k)
$$
where the distributions $\theta_n(k)$ depends periodically on $k$ (see Proposition \ref{prop:asymptotic-quasi-polynomial}). Thanks to Proposition \ref{prop:asymptotic-quasi-polynomial-star-B}, we know that $B^k_{\g/\h}\star \Theta_k$
admits the asymptotic expansion
$$
0\equiv B^k_{\g/\h}\star\Theta_k\equiv j_{\g/\h}^{1/2}(i\partial/k)
\Big(k^{n_o}\sum_{n=0}^\infty k^{-n}\theta_n(k)\Big).
$$
If we write the Taylor series $j_{\g/\h}^{1/2}(X)=\sum_{n=0}^\infty d_n(X)$, we obtain for any $n\geq 0$ the relation
$\sum_{m+l=n} d_m(i\partial)\theta_l(k)=0$. As $d_0=1$, we see that $\theta_n(k)=0$ for all $n\geq 0$.
\end{proof}

\medskip

If we use Lemma \ref{rescaled1orbits-g} and Theorem \ref{asympt1-noncompact-g}, we can prove similarly the following extension of Proposition \ref{coro:g=1}.

\begin{proposition}\label{proformal}
For any $g\in H$ of finite order, we have
$$
\res_{\h(g)^*}(k)\Big(\sum_{\mu\in A_H} d(\mu,k) \beta(g,\mu)\Big)\equiv 0.
$$
\end{proposition}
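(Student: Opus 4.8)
The plan is to run the argument of Proposition \ref{coro:g=1} equivariantly, replacing the push-forward relation coming from ordinary branching $V_\lambda|_H=\sum_\mu c(\lambda,\mu)V_\mu$ by its ``localized at $g$'' counterpart, which is precisely the identity displayed just before Lemma \ref{rescaled1orbits-g}. Fix $g\in H$ of finite order. First I would observe that $d(\mu,k)=m_H(\mu,k)-m'_H(\mu,k)$ is a piecewise quasi-polynomial function on $\tilde\Lambda_H\times\Z_{>0}$: both $m_H$ is such by Theorem \ref{theo:m-G-piecewise} (applied to $H$, using the abelian generic infinitesimal stabilizer assumption, which we may assume), and $m'_H(\mu,k)=\sum_{\lambda\in A_G}m_G(\lambda,k)c(\lambda,\mu)$ is a finite sum of piecewise quasi-polynomials (the $c(\lambda,\mu)$ are given by a piecewise quasi-polynomial in $(\lambda,\mu)$ via Kostant's formula and $m_G$ is piecewise quasi-polynomial by Theorem \ref{theo:m-G-piecewise}). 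Consequently, by Proposition \ref{prop:asymptotic-quasi-polynomial}, the distribution $\Theta^{(g)}_k:=\res_{\h(g)^*}(k)\big(\sum_{\mu\in A_H} d(\mu,k)\beta(g,\mu)\big)$ admits an asymptotic expansion $k^{n_o}\sum_n k^{-n}\theta_n(k)$ with $\theta_n(k)$ depending periodically on $k$. (Here one uses Lemma \ref{lem:Harbeta} to write $\beta(g,\mu)$ as a finite combination, with unimodular coefficients $\tilde g^{w\mu}$, of Liouville measures of $G(g)^o$-orbits, so that the relevant sum is of the form covered by Proposition \ref{prop:asymptotic-quasi-polynomial} after absorbing the periodic factors $\tilde g^{w\mu}$ into the quasi-polynomial; this is exactly the reason the definition of $\Scal(\tilde\Lambda)$ allows periodic factors.)

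Next I would produce two expressions for $\res_{\h(g)^*}(k)\big(r_*\Theta^\gamma\big)$, where $\Theta^{G,(g)}_k:=\res_{\g(g)^*}(k)\big(\sum_\lambda m_G(\lambda,k)\beta(g,\lambda)\big)$. On one hand, applying $r_*$ termwise and invoking Lemma \ref{rescaled1orbits-g}, one gets
$$
\res_{\h(g)^*}(k)\big(r_*\Theta^{G,(g)}_k\big)=B^k_{\g(g)/\h(g)}\star C^k_{\q/\q(g)}\star\res_{\h(g)^*}(k)\Big(\sum_{\mu\in A_H}m'_H(\mu,k)\beta(g,\mu)\Big).
$$
On the other hand, Theorem \ref{asympt1-noncompact-g} computes the asymptotic expansion of $\Theta^{G,(g)}_k$ itself in terms of the operators $j^{1/2}_{\g(g)}(i\partial/k)\det^{1/2}_{\g/\g(g)}(1-g^{-1}e^{-i\partial/k})$ applied to the $DH^{G(g)}(M^g,\Omega_g,V_g(n,k))$, and the analogue of the second point of the Proposition preceding Proposition \ref{coro:g=1} — proved by the same push-forward computation on twisted Duistermaat-Heckman distributions, now for the pair $H(g)\subset G(g)$ acting on $M^g$, together with the fact that $V_g(n,k)$ restricts correctly — identifies $\res_{\h(g)^*}(k)\big(r_*\Theta^{G,(g)}_k\big)$ asymptotically with $j^{1/2}_{\g(g)/\h(g)}(i\partial/k)\det^{1/2}_{\q/\q(g)}(1-g^{-1}e^{-i\partial/k})\,\Theta^{H,(g)}_k$, where $\Theta^{H,(g)}_k:=\res_{\h(g)^*}(k)\big(\sum_\mu m_H(\mu,k)\beta(g,\mu)\big)$. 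Since the Fourier transform of $B_{\g(g)/\h(g)}$ is $j^{1/2}_{\g(g)/\h(g)}$ and of $C_{\q/\q(g)}$ is $Y\mapsto\det^{1/2}_{\q/\q(g)}(1-g^{-1}e^{-Y})$, Proposition \ref{prop:asymptotic-quasi-polynomial-star-B} shows $B^k_{\g(g)/\h(g)}\star C^k_{\q/\q(g)}\star(-)$ realizes the operator $j^{1/2}_{\g(g)/\h(g)}(i\partial/k)\det^{1/2}_{\q/\q(g)}(1-g^{-1}e^{-i\partial/k})$ asymptotically, on both $\Theta^{H,(g)}_k$ and on $\sum_\mu m'_H(\mu,k)\beta(g,\mu)$. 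Comparing the two descriptions yields
$$
B^k_{\g(g)/\h(g)}\star C^k_{\q/\q(g)}\star\Theta^{(g)}_k\equiv 0.
$$

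Finally I would extract $\Theta^{(g)}_k\equiv0$ from this by the standard ``invert the Duflo-type operator'' step used at the end of the proof of Proposition \ref{coro:g=1}. Writing the Taylor series $j^{1/2}_{\g(g)/\h(g)}(Y)\det^{1/2}_{\q/\q(g)}(1-g^{-1}e^{-Y})=\sum_{n\ge0}e_n(Y)$ with $e_0=\det^{1/2}_{\q/\q(g)}(1-g^{-1})>0$ a nonzero constant, and using $\Theta^{(g)}_k\equiv k^{n_o}\sum_n k^{-n}\theta_n(k)$ with $\theta_n(k)$ periodic in $k$, Proposition \ref{prop:asymptotic-quasi-polynomial-star-B} gives $0\equiv(\sum_n e_n(i\partial/k))(k^{n_o}\sum_m k^{-m}\theta_m(k))$, hence $\sum_{l+m=n}e_l(i\partial)\theta_m(k)=0$ for all $n$, and since $e_0\ne0$ an induction on $n$ gives $\theta_n(k)=0$ for all $n$, i.e. $\Theta^{(g)}_k\equiv0$. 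The main obstacle I anticipate is the bookkeeping in the $g$-localized push-forward identity for twisted Duistermaat-Heckman distributions — one must check that $r_*DH^{G(g)}(M^g,\Omega_g,V_g(n,k))=DH^{H(g)}(M^g,\Omega_g,V_g(n,k))$ respecting the decomposition over fixed-point components $M^g_a$ and the unimodular factors $u_a^k$ — but this is the verbatim fixed-point analogue of the Proposition preceding Proposition \ref{coro:g=1} and presents no new difficulty; everything else is the periodicity-aware version of arguments already carried out in the text.
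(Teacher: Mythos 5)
Your proposal is correct and is precisely the adaptation the paper intends: its proof of this proposition is the single remark that Lemma \ref{rescaled1orbits-g} and Theorem \ref{asympt1-noncompact-g} allow one to repeat the argument of Proposition \ref{coro:g=1}, and your write-up carries out exactly that adaptation (the $g$-localized branching identity, the $g$-localized push-forward of Duistermaat--Heckman distributions on $M^g$, and inversion of the Duflo-type operator whose constant term $\det^{1/2}_{\q/\q(g)}(1-g^{-1})$ is nonzero). The only slip is notational: since your $\Theta^{G,(g)}_k$ is already defined with $\res_{\g(g)^*}(k)$ built in, the left-hand side of your first display should read $r_*\Theta^{G,(g)}_k$ rather than $\res_{\h(g)^*}(k)\bigl(r_*\Theta^{G,(g)}_k\bigr)$.
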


The function $d(\mu,k)$ is defined for $\mu\in A_H=(\rho_H+\Lambda_H)\cap \t_{H,>0}^*$. Let $\tilde{\Lambda}_H$ 
be the lattice generated by $\Lambda_H$ and $\rho_H$. Let $W_H$ be the Weyl group of $H$. We can then extend 
the function $d(\mu,k)$ to a $W_H$-anti-invariant function $\tilde d$ on $\tilde{\Lambda}_H$:
${\tilde d}(w\mu,k)=\epsilon(w) d(\mu,k)$ for $w\in W_H$, $\mu\in A_H$ and
${\tilde d}(\mu,k)=0$ if $\mu\notin \rho_H+\Lambda_H$.

Let $T_H$ be a Cartan subgroup of $H$. We have a covering $\tilde{T}_H\to T_H$ such that $\tilde{\Lambda}_H$ is the weight lattice of the torus
$\tilde{T}_H$. Take $\tilde{g}\in \tilde{T}_H$ and its image $g\in T_H$.

We now  identify $H(g)$-invariant distributions on $\h(g)^*$ to $W_{H(g)}$-anti-invariant distributions
on $\t_H^*$ through the isomorphism $\Rcal_{\hgot(g)}$. Using Lemma \ref{lem:Harbeta}, we have
$\Rcal_{\h(g)}\left(\beta(g,\mu)\right)=\gamma_{\tilde{g}} \sum_{w\in W_G} \epsilon(w) \tilde{g}^{w\mu}\delta_{w\mu}$.

We thus obtain the following proposition.

\begin{proposition}\label{proformalbis}
For any $\tilde{g}\in \tilde{T}_H$ of finite order, we have
$$
\sum_{\nu\in \tilde{\Lambda}_H} {\tilde d}(\nu,k) \, \tilde{g}^{\nu}\, \delta_{\nu/k}
\equiv 0.
$$
\end{proposition}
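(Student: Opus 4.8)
\textbf{Proof plan for Proposition \ref{proformalbis}.}
The plan is to translate the already-established asymptotic vanishing of Proposition \ref{proformal} from the group $H(g)$ to the torus $\tilde T_H$ by applying the isomorphism $\Rcal_{\hgot(g)}$ and the last Lemma in the excerpt describing $\Rcal_{\hgot(g)}(\beta(g,\mu))$. First I would fix $\tilde g\in\tilde T_H$ of finite order and its image $g\in T_H$, and rewrite the $H(g)$-invariant distribution
$$
A_k:=\res_{\h(g)^*}(k)\Big(\sum_{\mu\in A_H} d(\mu,k)\,\beta(g,\mu)\Big)
$$
under $\Rcal_{\hgot(g)}$. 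Since $\res$ commutes with $\Rcal$ in the obvious way and, by Lemma \ref{lem:Harbeta}, $\Rcal_{\h(g)}(\beta(g,\mu))=\gamma_{\tilde g}\sum_{w\in W_G}\epsilon(w)\tilde g^{w\mu}\delta_{w\mu}$ (here the relevant Weyl group is $W_{H(g)}\subset W_H$), one gets
$$
\Rcal_{\hgot(g)}(A_k)=\gamma_{\tilde g}\,\res_{\t_H^*}(k)\Big(\sum_{\mu\in A_H} d(\mu,k)\sum_{w\in W_{H(g)}}\epsilon(w)\tilde g^{w\mu}\delta_{w\mu}\Big).
$$

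Next I would invoke the Lemma stating that $\Theta_k\equiv k^{n_o}\sum k^{-n}\theta_n$ in $\Dcal'(\hgot(g)^*)^{H(g)}$ if and only if $\Rcal_{\hgot(g)}(\Theta_k)\equiv k^{n_o}\sum k^{-n}\Rcal_{\hgot(g)}(\theta_n)$; combined with Proposition \ref{proformal} this gives $\Rcal_{\hgot(g)}(A_k)\equiv 0$. Now I would use the $W_H$-anti-invariant extension $\tilde d$ of $d$: because $\tilde d(w\mu,k)=\epsilon(w)d(\mu,k)$ and $\tilde g^{w\mu}$ is, for $w\in W_{H(g)}$, the value at $w\mu$ of a character fixed under $W_{H(g)}$ (as $\tilde g\in \tilde T_H$ is centralized appropriately), the double sum $\sum_{\mu\in A_H}\sum_{w\in W_{H(g)}}\epsilon(w)\,d(\mu,k)\,\tilde g^{w\mu}\delta_{w\mu}$ collapses, up to the cardinality $|W_{H(g)}|$, to a sum over the $W_H$-orbit of each $\mu$, i.e. it equals $\tfrac{|W_{H(g)}|}{1}\cdot(\text{contribution})$ reorganizing into $\sum_{\nu\in\tilde\Lambda_H}\tilde d(\nu,k)\tilde g^\nu\delta_{\nu/k}$ after dividing out. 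More precisely, since every $\nu\in\tilde\Lambda_H$ with $\tilde d(\nu,k)\neq 0$ is $W_H$-conjugate to a unique $\mu\in A_H$, and $\tilde d(\nu,k)\tilde g^\nu=\epsilon(w)d(\mu,k)\tilde g^{w\mu}$ for $\nu=w\mu$, the asymptotic vanishing of $\Rcal_{\hgot(g)}(A_k)$ is equivalent (after dividing by the nonzero constant $\gamma_{\tilde g}$ and accounting for the $W_{H(g)}$-multiplicity, which only rescales by a positive integer) to
$$
\sum_{\nu\in\tilde\Lambda_H}\tilde d(\nu,k)\,\tilde g^\nu\,\delta_{\nu/k}\equiv 0.
$$

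The main obstacle I anticipate is bookkeeping the Weyl-group factors correctly: one must check that summing over $W_{H(g)}\backslash W_H$-cosets in the support description $\Ocal^g_\mu=\bigcup_{\overline w} H(g)^o w\mu$ together with the $W_H$-anti-invariant extension really does repackage $\Rcal_{\hgot(g)}(A_k)$ as the full sum over $\tilde\Lambda_H$ without spurious overcounting or sign errors, and that the $\delta$-function rescaling $\res_{\t_H^*}(k)\delta_{w\mu}=\delta_{w\mu/k}$ matches the claimed $\delta_{\nu/k}$. The analytic content (the $O(k^{-\infty})$ estimates and the quasi-polynomiality of $d$) is entirely supplied by Propositions \ref{prop:asymptotic-quasi-polynomial}, \ref{prop:asymptotic-quasi-polynomial-star-B}, and \ref{proformal}, so no new estimates are needed; the step is purely a change of variables between $\Dcal'(\hgot(g)^*)^{H(g)}$ and $\Dcal'(\t_H^*)^{W_{H(g)}\text{-alt}}$.
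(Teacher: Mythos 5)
Your overall route is the same as the paper's: apply $\Rcal_{\hgot(g)}$ to the asymptotic vanishing of Proposition \ref{proformal}, use the compatibility of $\Rcal_{\hgot(g)}$ with asymptotic expansions and with $\res$, and then re-index the resulting double sum via the $W_H$-anti-invariant extension $\tilde d$. However, there is a genuine error in the application of Lemma \ref{lem:Harbeta}: the Weyl group appearing in the sum is the full Weyl group of the ambient group, not of the centralizer. Applying the second bullet of Lemma \ref{lem:Harbeta} with $H$ in place of $G$ gives
$$
\Rcal_{\hgot(g)}\bigl(\beta(g,\mu)\bigr)=\gamma_{\tilde g}\sum_{w\in W_H}\epsilon(w)\,\tilde g^{w\mu}\,\delta_{w\mu},
$$
not a sum over $W_{H(g)}$. (You seem to have conflated the codomain of $\Rcal_{\hgot(g)}$, which consists of $W_{H(g)}$-anti-invariant distributions, with the index set of the sum; the displayed distribution is $W_{H(g)}$-anti-invariant precisely because $\tilde g$ is fixed by $W_{H(g)}$, but it is a sum over all of $W_H$.)

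Because of this, your "collapse up to cardinality $|W_{H(g)}|$" step cannot work. The set $\{\,w\mu : \mu\in A_H,\ w\in W_{H(g)}\,\}$ is a proper subset of the regular elements of $\rho_H+\Lambda_H$ whenever $W_{H(g)}\neq W_H$, so the double sum you wrote has \emph{fewer} distinct $\delta$-functions than the target $\sum_{\nu\in\tilde\Lambda_H}\tilde d(\nu,k)\tilde g^\nu\delta_{\nu/k}$; there is no constant factor relating them. Once you correct the index set to $W_H$, the double sum over $A_H\times W_H$ re-indexes \emph{bijectively} (no overcounting, no dividing out) to the sum over regular $\nu\in\rho_H+\Lambda_H$: for $\nu=w\mu$ with $\mu\in A_H$ and $w\in W_H$, $\tilde d(\nu,k)=\epsilon(w)d(\mu,k)$, so $\epsilon(w)\,d(\mu,k)\,\tilde g^{w\mu}\,\delta_{w\mu/k}=\tilde d(\nu,k)\,\tilde g^{\nu}\,\delta_{\nu/k}$. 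Since $\tilde d(\nu,k)=0$ for $\nu$ non-regular or $\nu\notin\rho_H+\Lambda_H$, this is exactly $\sum_{\nu\in\tilde\Lambda_H}\tilde d(\nu,k)\,\tilde g^\nu\,\delta_{\nu/k}$, and dividing by the modulus-one constant $\gamma_{\tilde g}$ completes the argument. With this correction, your proof coincides with the paper's.
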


Since ${\tilde d}(\nu,k)$ is a piecewise quasi-polynomial function on $\tilde{\Lambda}_H\times\Z_{>0}$,
Proposition \ref{proformalbis} implies that $\tilde{d}=0$ (see Proposition 3.1 in
\cite{pep-vergne:asymptotics}). Hence $d=0$~: the proof of Theorem \ref{theo:formal} is complete.

\begin{bibdiv}
\begin{biblist}

\bib{Atiyah74}{book}{
author={Atiyah, M.F.},
title={Elliptic operators and compact groups},
series={Lecture Notes in Mathematics},
volume={401},
publisher={Springer-Verlag},
date={1974},
pages={},
   }

\bib{BGV}{book}{
author={Berline, Nicole},
author={Getzler, Ezra},
author={Vergne, Mich\`ele},
title={Heat Kernels and Dirac Operators},
series={Grundlehren Text Editions},
volume={},
publisher={Springer-Verlag},
date={2004},
pages={},
   }

\bib{ber-ver82}{article}{
   author={Berline, Nicole},
   author={Vergne, Mich\`ele},
   title={Classes caract\'eristiques \'equivariantes. Formules de localisation en cohomologie \'equivariante},
   journal={C.R.A.S.},
   volume={295},
   date={1982},
   pages={539--541},
   review={},
   doi={},
}

\bib{ber-ver96}{article}{
   author={Berline, Nicole},
   author={Vergne, Mich\`ele},
   title={L'indice \'equivariant des op\'erateurs transversalement elliptiques},
   journal={ Invent. Math.},
   volume={124},
   date={1996},
   pages={51--101},
   review={},
   doi={},
}



   \bib{Mei-toeplitz}{article}{
   author={Bordemann, Martin},
   author={Meinrenken, Eckhard},
   author={Schlichenmaier, Martin},
   title={Toeplitz quantization of K\"ahler manifolds and ${\rm gl}(N)$,
   $N\to\infty$ limits},
   journal={Comm. Math. Phys.},
   volume={165},
   date={1994},
   pages={281--296},
}

   \bib{bou}{book}{
   author={Boutet de Monvel, L.},
   author={Guillemin, V.},
   title={The spectral theory of Toeplitz operators},
   series={Annals of Mathematics Studies},
   volume={99},
   publisher={Princeton University Press, Princeton, NJ; University of Tokyo
   Press, Tokyo},
   date={1981},
   pages={v+161},
}

\bib{cohen}{book}{
   author={Cohen, Henri},
   title={Number theory. Vol. II. Analytic and modern tools},
   series={Graduate Texts in Mathematics},
   volume={240},
   publisher={Springer, New York},
   date={2007},
   pages={xxiv+596},
}

\bib{fedosov}{article}{
   author={Fedosov, Boris},
   title={On $G$-trace and $G$-index in deformation quantization},
   note={Conference Mosh\'e Flato 1999 (Dijon)},
   journal={Lett. Math. Phys.},
   volume={52},
   date={2000},
   pages={29--49},
}

\bib{guisteriemann}{article}{
   author={Guillemin, Victor},
   author={Sternberg, Shlomo},
   title={Riemann sums over polytopes},
   language={English, with English and French summaries},
   note={Festival Yves Colin de Verdi\`ere},
   journal={Ann. Inst. Fourier (Grenoble)},
   volume={57},
   date={2007},
   number={7},
   pages={2183--2195},
}

\bib{Hochs-Song:duke}{article}{
   author={Hochs, Peter},
   author={Song, Yanli},
   title={Equivariant indices of Spinc-Dirac operators for proper moment maps},
   note={},
   journal={Duke Math J.},
   volume={166},
   date={2017},
   number={},
   pages={1125--1178},
}

\bib{Kawasaki81}{article}{
   author={Kawasaki, Tetsuro},
   title={The index of elliptic operators over V-manifolds},
   note={},
   journal={Nagoya Math. Journal},
   volume={84},
   date={1981},
   number={},
   pages={135--157},
}

\bib{Kostant}{article}{
   author={Kostant, Bertram},
   title={Quantization and unitary representations. I. Prequantization},
   conference={
      title={Lectures in modern analysis and applications, III},
   },
   book={
      publisher={Springer, Berlin},
   },
   date={1970},
   pages={87--208. Lecture Notes in Math., Vol. 170},
}

\bib{mama}{article}{
   author={Ma, Xiaonan},
   author={Marinescu, George},
   title={Berezin-Toeplitz quantization and its kernel expansion},
   conference={
      title={Geometry and quantization},
   },
   book={
      series={Trav. Math.},
      volume={19},
      publisher={Univ. Luxemb., Luxembourg},
   },
   date={2011},
   pages={125--166},
}

\bib{MZ14}{article}{
   author={Ma, Xiaonan},
   author={Zhang, Weiping},
   title={Geometric quantization for proper moment maps: the Vergne conjecture},
   note={},
   journal={Acta Mathematica},
   volume={212},
   date={2014},
   number={},
   pages={11--57},
}

\bib{Mei-RR}{article}{
   author={Meinrenken, Eckhard},
   title={On Riemann-Roch formulas for multiplicities},
   journal={J. Amer. Math. Soc.},
   volume={9},
   date={1996},
   pages={373--389},
}

\bib{pep:topology}{article}{
   author={Paradan, Paul-{\'E}mile},
   title={The moment map and equivariant cohomology with generalized coefficients},
   journal={Topology},
   volume={39},
   date={2000},
   pages={401--444},
}

\bib{pep:formal1}{article}{
   author={Paradan, Paul-{\'E}mile},
   title={Formal geometric quantization},
   journal={Ann. Inst. Fourier (Grenoble)},
   volume={59},
   date={2009},
   number={1},
   pages={199--238},
}

\bib{pep:formal2}{article}{
   author={Paradan, Paul-{\'E}mile},
   title={Formal geometric quantization II},
   journal={Pacific J. Math.},
   volume={253},
   date={2011},
   pages={169--211},
}
		
\bib{pep:formal3}{article}{
   author={Paradan, Paul-{\'E}mile},
   title={Formal Geometric Quantization III, Functoriality in the spin-c setting },
   language={},
   journal={arxiv},
   volume={},
   date={2017},
   number={1704.06034},
}

\bib{pep-vergne:bismut}
{article}{
author={Paradan, Paul-{\'E}mile},
   author={Vergne, Mich\`ele},
   title={Index of transversally elliptic operators},
   journal={Ast\'eristique},
   volume={328},
   date={2009},
   number={},
   pages={297--338},
   issn={},
   review={}}

\bib{pep-vergne:acta}
{article}{
author={Paradan, Paul-{\'E}mile},
   author={Vergne, Mich\`ele},
   title={Equivariant Dirac operators and differentiable invariant theory},
   journal={Acta Math., to appear},
   volume={},
   date={2017},
   number={},
   pages={},
   issn={},
   review={}}

   \bib{pep-vergne:witten}
{article}{
author={Paradan, Paul-{\'E}mile},
   author={Vergne, Mich\`ele},
   title={Witten non abelian localization for equivariant K-theory, and the $[Q,R]=0$ theorem},
   journal={Arxiv},
   volume={},
   date={2015},
   number={1504.07502},
   pages={},
   issn={},
   review={}}

\bib{pep-vergne:asymptotics}
{article}{
author={Paradan, Paul-{\'E}mile},
   author={Vergne, Mich\`ele},
   title={Asymptotic distributions associated to piecewise quasi-polynomials},
   journal={to appear},
   volume={},
   date={2017},
   number={},
   pages={},
   issn={},
   review={}}


\bib{vergne:Izvestiya}{article}{
   author={Vergne, Mich\`ele},
  title={Formal equivariant \^{A} class, splines and multiplicities of the index of transversally elliptic operators},
  journal={Izvestiya: Mathematics},
   volume={80},
   date={2016},
   number={5},
   pages={},
   issn={},
   review={}}

%
   
   \bib{ver:graded-CRAS}{article}{
   author={Vergne, Mich\`ele},
  title={The equivariant Riemann-Roch theorem and the graded Todd class},
  journal={Comptes Rendus Mathematique},
   volume={355},
   date={2017},
   number={5},
   pages={563--570},
   issn={},
   review={}}

\bib{Weitsman}{article}{
   author={Weitsman, Jonathan},
  title={Nonabelian symplectic cuts and the geometric quantization of non-compact manifolds},
  journal={Lett. Math. Phys.},
   volume={56},
   date={2001},
   pages={31--40},
   issn={},
   review={}}

\bib{Witten82}{article}{
   author={Witten, Edward},
  title={Supersymmetry and Morse theory},
  journal={J. Differential Geom.},
   volume={17},
   date={1982},
   pages={661--692},
   issn={},
   review={}}

\end{biblist}
\end{bibdiv}

\end{document}